\newtheorem{thm}{Theorem}[section]
\newtheorem{cor}[thm]{Corollary}
\newtheorem{lem}[thm]{Lemma}
\newtheorem{prop}[thm]{Proposition}
\newtheorem{defn}[thm]{Definition}
\newtheorem{rmk}[thm]{Remark}
\newtheorem{ex}[thm]{Example}
\newenvironment{proof}[1][Proof]{\begin{trivlist}\item[]{\textsc{#1}.\
}}{\nolinebreak $\Box$ \end{trivlist}} 
\newcommand{\stack}[2]{\begin{subarray}{c} #1 \\ #2 \end{subarray}}
\newcommand{\mb}{\mathbb}
\newcommand{\mc}{\mathcal}
\newcommand{\mr}{\mathrm}
\newcommand{\msf}{\mathsf}
\DeclareMathAlphabet{\mathpzc}{OT1}{pzc}{m}{it}
\DeclareMathAlphabet{\mf}{U}{euf}{m}{n}
\DeclareMathAlphabet{\ms}{U}{rsfso}{m}{n}
\newcommand{\hra}{\hookrightarrow}
\newcommand{\tensor}{\otimes}
\newcommand{\ul}{\underline}
\renewcommand{\bar}{\overline}
\newcommand{\sfrac}[2]{{\textstyle\frac{#1}{#2}}}
\renewcommand{\tilde}{\widetilde}
\newcommand{\DM}{{\msf{DM}}}
\newcommand{\QS}{{\msf{QS}}}
\newcommand{\Sch}{{\msf{Sch}}}
\newcommand{\St}{{\msf{St}}}
\newcommand{\Var}{{\msf{Var}}}
\renewcommand{\phi}{\varphi}
\newcommand{\YY}{\mf Y}   
\newcommand{\WW}{\mf W}      
\newcommand{\XX}{\mf X}
\newcommand{\ZZ}{\mf Z}      
\newcommand{\pmat}[1]{\begin{pmatrix}#1\end{pmatrix}}
\newcommand{\smat}[1]{\bigl(\begin{smallmatrix}#1\end{smallmatrix}\bigr)}
\newcommand{\dmo}[2]{\DeclareMathOperator{#1}{#2}}
\dmo{\id}{id}
\dmo{\im}{im}
\dmo{\uAut}{\ul{Aut}}
\dmo{\uHom}{\ul{Hom}}
\dmo{\spec}{Spec}
\dmo{\Aut}{Aut}
\dmo{\B}{B}
\dmo{\BGL}{BGL}
\dmo{\End}{End}
\dmo{\Gal}{Gal}
\dmo{\GL}{GL}
\dmo{\Hom}{Hom}
\dmo{\Epi}{Epi}
\dmo{\Inj}{Inj}
\dmo{\Spec}{Spec}
\dmo{\Stab}{Stab}
\dmo{\In}{\mr{I}}
\dmo{\Iss}{\mr{I^{ss}}}
\dmo{\Iu}{\mr{I^u}}
\dmo{\D}{\mr{D}}
\dmo{\DZI}{\mr{DZI}}
\dmo{\ZI}{\mr{ZI}}
\dmo{\NZI}{\mr{NZI}}
\dmo{\ZoI}{\mr{Z^0I}}
\dmo{\ZssoI}{\mr{Z^0_{ss}I}}
\dmo{\DZssI}{\mr{DZ_{ss}I}}
\dmo{\ZssI}{\mr{Z_{ss}I}}
\dmo{\ZuI}{\mr{Z_{u}I}}
\renewcommand{\ss}{{\text{\it ss}}}
\newcommand{\longiso}{\stackrel{\simeq}{\longrightarrow}}
\renewcommand{\setminus}{\mathbin{\mathchoice{\mysetminusD}{\mysetminusT}{\mysetminusS}{\mysetminusSS}}}
	\newcommand{\mysetminusD}{\hbox{\tikz{\draw[line width=0.6pt,line cap=round] (3pt,0) -- (0,6pt);}}}
	\newcommand{\mysetminusT}{\mysetminusD}
	\newcommand{\mysetminusS}{\hbox{\tikz{\draw[line width=0.45pt,line cap=round] (2pt,0) -- (0,4pt);}}}
	\newcommand{\mysetminusSS}{\hbox{\tikz{\draw[line
                width=0.4pt,line cap=round] (1.5pt,0) -- (0,3pt);}}}
\newcommand{\cartesian}{ \ar @{} [dr] |{\square}}
\newcommand{\diag}{\xymatrix}
\title{The Eigenvalue Spectrum of the Inertia Operator}
\author{Kai Behrend, Pooya Ronagh}
\date{\today}
\begin{document}

\maketitle

\begin{abstract}
We view the inertia construction of algebraic stacks as an operator on the Grothendieck groups of various categories of algebraic stacks. We show that the inertia operator is locally finite and diagonalizable. This is proved for the Grothendieck group of Deligne-Mumford stacks over a base scheme and the category of quasi-split Artin stacks defined over a field of characteristic zero. 
Motivated by the quasi-splitness condition, in \cite{BRHall} we consider the inertia operator of the Hall algebra of algebroids, and applications of it in generalized Donaldson-Thomas theory. 
\end{abstract}

\tableofcontents

\section{Introduction} 

Let $B$ be a unital noetherian commutative ring and $\Sch/B$, the big \'etale site of schemes of finite type over $B$. $\St/B$ denotes the category of algebraic stacks of finite type over $\Sch/B$ with affine diagonals. This in particular means that for every algebraic stack $\XX$ and an $S$-point $s: S \to \XX$, the sheaf of automorphisms $\uAut(s) \to S$ is an affine $S$-group scheme.

The Grothendieck group of $\Sch/B$, denoted by $K(\Sch/B)$ is the free abelian group of isomorphism classes of such schemes, modulo the scissor relations, 
$[X] = [Z] + [X \setminus Z],$ for $Z \subset X$ a closed subscheme and
equipped with structure of a commutative unital ring according to the fibre product in $\Sch/B$,
$[X] \cdot [Y] = [X \times Y].$
We will always tensor with $\mb Q$. 

The Grothendieck ring of the category $\St/B$ of algebraic stacks over $\Sch/B$, is defined similar to above: $K(\St/B)$ is the $\mb Q$-vector space generated by isomorphism classes of algebraic stacks modulo similar relations; i.e. for any closed immersion $\ZZ \hra \XX$ of algebraic stacks we have $[\XX] = [\ZZ] + [\XX \setminus \ZZ]$. And the fiber product over the base category turns $K(\St/B)$ into a commutative ring. Hence for any algebraic stack $\mf Y$, isomorphic to a fiber product $\XX \times \ZZ$, we have $[\YY] = [\XX][\ZZ]$.
Moreover, $K(\St/B)$ is a unital associative $K(\Sch/B)$-algebra in the obvious way. 

For any algebraic stack, $\XX$, the inertia stack $\In\XX$ is the fiber product 
$$\diag{
\cartesian \In \XX \ar[d] \ar[r] & \XX \ar[d]^{\Delta} \\
\XX \ar[r]_{\Delta} & \XX \times \XX}$$
where $\Delta$ is the diagonal morphism. $\In \XX$ is isomorphic to the stack of objects $(x, f)$, where $x$ is an object of $\XX$ and $f: x \to x$ is an automorphism of it. Here a morphism $h: (x, f) \to (y, g)$ is an arrow $h: x \to y$ in $\XX$ satisfying $g \o h = h \o f$. 

This construction respects the equivalence and scissor relations and is \linebreak $K(\Sch/B)$-linear, hence inducing a well-defined \emph{inertia operator} on $K(\St/B)$ as a $K(\Sch/B)$-algebra, and in particular an \emph{inertia endomorphism} on the $K(\Sch/B)$-module $K(\St/B)$. 

Other important variants of the inertia operator are the semisimple and unipotent inertia operators, respectively $I^{ss}$ and $I^u$, which are defined in \S\ref{sec:semisimple-inertia-definition}.

We always assume that $K$ is tensored with $\mb Q$. Our main results on local finiteness and diagonalization of the inertia endomorphism are as follows. Here $q= [\mb A^1]$ is our notation for the class of the affine line. 
\begin{enumerate} 
\item Corollary \ref{cor:diagonalization-DM}: 
In the case of Deligne-Mumford stacks the operator $I$ is diagonalizable as a $K(\Sch / B)$-linear endomorphism, and the eigenvalue spectrum of it is equal to $\mb N$, the set of positive integers. 
\item Corollary \ref{cor:unipotent-inerta}: 
In the case of Artin stacks, the unipotent inertia $I^u$ is diagonalizable on $K(\St)[q^{-1}, \{(q^k - 1)^{-1}: k \geq 1\}]$ and the eigenvalue spectrum of it is the set $\{q^k: k \geq 0\}$ of all power of $q$. 
\item Theorem \ref{thm:main}: 
In the case of quasi-split stacks, the operator $I$ is diagonalizable as a $\mb Q(q)$-linear endomorphism and the eigenvalue spectrum of it is the set of all polynomials of the form 
$$n q^u \prod_{i=1}^k (q^{r_i} -1).$$
\item Theorem \ref{thm:semisimple-main}: 
In the case of quasi-split stacks the endormorphism $I^{ss}$ is diagonalizable as a $\mb Q(q)$-linear operator and 
the eigenvalue spectrum of it is the set of all polynomials of the form 
$$n \prod_{i=1}^k (q^{r_i} -1).$$
\end{enumerate} 
This paper is organized as follows. \S\ref{ch:groups} contains several results needed for stratification of group schemes in the rest of the paper. The reader may skip this section and refer to it as needed in the proofs of the results in other sections.  
In \S\ref{ch:Gpd} we consider the simplest case; i.e. the category of groupoids. The arguments in this section illustrate the main ideas behind the results of this paper. In \S\ref{ch:DM} we consider the full subcategory $\DM$ of $\St$, of Deligne-Mumford stacks. For the cases of Artin stacks in \S\ref{ch:artin} and \S\ref{ch:quasi-split}, the base category would be the category of varieties over the spectrum of an algebraically closed field in characteristic zero.

\section{Stratification of group schemes}
\label{ch:groups}

\subsection{Connected component of unity} 

By a \emph{group space} $G \to X$, we mean a group object in the category $\St/B$. In this section we will see that we can always stratify such objects by nicely-behaved group schemes. The results of this section will be used in \S\ref{ch:artin} and \S\ref{sec:quasi-split-stacks}.

Let $G$ be a finitely presented group scheme over a base scheme $X$. Recall that the \emph{connected component of unity}, $G^0$, is defined\cite[Exp. VI(B), Def. 3.1]{SGA3} as the functor that assigns to any morphism $S \to X$, the set
$$G^0 (S) = \{ u\in G(S): \forall x \in X, u_x (S_x) \subset G_x^0\}.$$
Here $G_x^0$ is the connected component of unity of the algebraic group $G_x = G \tensor_X \kappa (x)$. By \cite[Exp. VI(B), Thm. 3.10]{SGA3}, if $G$ is smooth over $X$, this functor is representable by a unique open subgroup scheme of $G$. Also note that in this case, $G^0$ is smooth and finitely presented and is preserved by base change \cite[Exp. VI(B), Prop. 3.3]{SGA3}. 

When $G$ is finitely presented and smooth, the quotient space $G/G^0$ exists as a finitely presented and \'etale algebraic space over $X$. For sheaf theoretic reasons, the formation of this quotient is also preserved by base change. 
$G^0$ is not closed in general (interesting examples can be found in \cite[\S 7.3 (iii)]{RaynaudGrpSch} and \cite[Exp. XIX, \S 5]{SGA3}), however we have the following
\begin{lem}\label{lem:closed-conn-comp-unity} Let $G \to X$ be a smooth group scheme and assume that $\bar G= G/ G^0$ is finite and \'etale. Then $G^0$ is a closed subscheme of $G$. 
\end{lem}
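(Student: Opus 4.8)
The plan is to present $G^0$ as the scheme-theoretic kernel of the quotient homomorphism $\pi\colon G\to\bar G$ and to deduce that it is closed from the fact that the identity section of $\bar G$ is a closed immersion, which in turn follows from $\bar G$ being finite, hence separated, over $X$.

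First I would set the stage. By the discussion preceding the lemma, $\bar G=G/G^0$ is a finitely presented \'etale algebraic space over $X$, and by hypothesis it is finite over $X$; in particular $\bar G$ is (representable by) a scheme and the structure morphism $\bar G\to X$ is separated, since finite morphisms are separated. Write $\epsilon\colon X\to\bar G$ for its identity section. The one real input is then the standard fact that a section of a separated morphism is a closed immersion: applied to $\epsilon$, it shows that $\epsilon$ is a closed immersion, since $\epsilon$ is a base change of the diagonal $\Delta_{\bar G/X}\colon\bar G\to\bar G\times_X\bar G$, and the latter is a closed immersion precisely because $\bar G\to X$ is separated.

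Finally I would identify $G^0$ with $G\times_{\bar G,\epsilon}X$: indeed $G^0$ is by construction the kernel of $\pi$, and the kernel of a homomorphism of group schemes over $X$ is precisely the fibre product of that homomorphism with the identity section. Since closed immersions are stable under base change, the inclusion $G^0\hra G$ is therefore a closed immersion, which is the assertion. I do not anticipate a genuine obstacle; the only points requiring a little care are that $\pi$ really is a homomorphism of group schemes with kernel $G^0$ and that its target is a genuine finite \'etale scheme, both of which follow from the results quoted before the lemma together with the finiteness hypothesis. (Alternatively one can argue purely topologically: $G^0$ is already open in $G$, and since $\epsilon(X)$ is closed in $\bar G$ the subset $G^0=\pi^{-1}(\epsilon(X))$ is clopen in $G$, and a clopen open subscheme is automatically also a closed subscheme.)
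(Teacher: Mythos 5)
Your proof is correct, but it takes a different (and more direct) route than the paper. You identify $G^0$ with the scheme-theoretic kernel of the quotient map $\pi\colon G\to\bar G$, i.e.\ with the pullback $G\times_{\bar G,\epsilon}X$ of the unit section $\epsilon$, observe that $\epsilon$ is a closed immersion because $\bar G\to X$ is finite, hence separated, and conclude by stability of closed immersions under base change. The paper never invokes the unit section or the kernel description: it shows that the shearing map $G^0\times_X G\to G\times_X G$, $(h,g)\mapsto (hg,g)$, is a base change of the diagonal $\bar G\to\bar G\times_X\bar G$ (a closed immersion; the paper phrases this via properness/universal closedness of the finite morphism $\bar G\to X$), and then descends the property of being a closed immersion along the fppf cover $G\times_X G\to G$, $(g,h)\mapsto gh^{-1}$, whose base change of $G^0\hookrightarrow G$ is exactly that shearing map. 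Both arguments ultimately rest on the same input, namely that the diagonal of $\bar G$ over $X$ is a closed immersion; what you buy by the kernel description is that no descent step is needed, at the small cost of having to justify that $\pi$ is a homomorphism with kernel precisely $G^0$ (which does follow, as you say, from the discussion preceding the lemma, since $G^0$ is a subsheaf and $G/G^0$ is the fppf quotient, compatible with base change), and that the finite \'etale algebraic space $\bar G$ is an honest separated scheme (or one simply runs your argument for algebraic spaces). Your parenthetical topological variant is also sound: $G^0$ is open, its underlying set is the closed set $\pi^{-1}(\epsilon(X))$, and a clopen open subscheme of $G$ is also a closed subscheme.
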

\begin{proof} $\bar G \to X$ is finite, hence proper and consequently universally closed. Thus in the cartesian diagram 
$$\diag{\cartesian G^0 \times_X G \ar[r]^{\phi} \ar[d] & G \times_X G \ar[d] \\ \bar G \ar[r] & \bar G \times_X \bar G}$$
the morphism $\phi: (h, g) \mapsto (hg, g)$ is a closed immersion. The property of being a closed immersion is local in the fppf topology and thus by the cartesian diagram
$$\diag{\cartesian G^0 \times_X G \ar[r] \ar[d] & G \times_X G \ar[d] \\ G^0 \ar[r] & G}$$
the embedding of $G^0$ in $G$ is also a closed immersion. The vertical right hand arrow is described by $(g, h)\mapsto g h^{-1}$. 
\end{proof}
Now from Zariski's main theorem (in particular \cite[Lem. 03I1]{SP}) it follows that:

\begin{cor}\label{cor:closed-conn-comp-unity} Let $G$ be a smooth finitely presented group scheme over $X$. There exists a nonempty open scheme $U$ of $X$, such that $G_U^0$ is closed and $G_U/G_U^0$ is finite and \'etale over $U$.  
\end{cor}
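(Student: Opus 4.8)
The plan is to reduce the statement, via Lemma \ref{lem:closed-conn-comp-unity}, to the problem of finding a nonempty open $U\subseteq X$ over which the quotient $\bar G_U=G_U/G_U^0$ is finite and \'etale, and then to produce such a $U$ by a spreading-out argument around a generic point of $X$.

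First I would record the reduction. Suppose we have a nonempty open $U\subseteq X$ with $\bar G_U\to U$ finite and \'etale. Then $G_U$ is again a smooth finitely presented group scheme over $U$, and since the formations of $G^0$ and of $G/G^0$ are preserved by base change we have $G_U^0=(G^0)_U$ and $G_U/G_U^0=\bar G_U$. Hence Lemma \ref{lem:closed-conn-comp-unity} applies to $G_U\to U$ and shows that $G_U^0\hra G_U$ is a closed immersion, which is exactly the assertion. So everything rests on constructing $U$.

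For that, recall from the discussion preceding the statement that $\bar G=G/G^0$ is an algebraic space, \'etale and finitely presented over $X$; as $X$ is noetherian, $\bar G$ is quasi-compact. I would replace $X$ by one of its irreducible open subsets, so that $X$ is irreducible with generic point $\eta$. The fibre $\bar G_\eta$ is then a quasi-compact \'etale algebraic space over the field $\kappa(\eta)$, hence a finite disjoint union of spectra of finite separable extensions of $\kappa(\eta)$; in particular it is a scheme, finite and \'etale over $\kappa(\eta)$. Now I would apply Zariski's main theorem in its local form \cite[Lem. 03I1]{SP} to the quasi-finite morphism $\bar G\to X$ along the finite fibre $\bar G_\eta$: this yields a nonempty open $U_0\ni\eta$ of $X$ together with an open $V\subseteq\bar G_{U_0}$ containing $\bar G_\eta$ and with $V\to U_0$ finite. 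The complement $\bar G_{U_0}\setminus V$ is locally closed, hence of finite type over $U_0$, and it is empty over $\eta$, so by Chevalley its image in $U_0$ is a constructible set not containing $\eta$; since $U_0$ is irreducible noetherian this image is contained in a proper closed subset $C\subsetneq U_0$. Setting $U:=U_0\setminus C$ we get $\bar G_U=V_U$, which is finite over $U$; being also \'etale, $\bar G_U\to U$ is finite and \'etale, and Lemma \ref{lem:closed-conn-comp-unity} finishes the proof.

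The one genuine point of care is the passage from ``quasi-finite'' to ``finite''. The morphism $\bar G\to X$ is in general not separated --- its unit section is closed precisely when $G^0$ is closed in $G$, which is exactly what fails in general --- so Zariski's main theorem cannot be invoked for $\bar G\to X$ over all of $X$. This is why one must localize around a generic point, where the fibre is already finite; correspondingly one can only expect a dense open $U$, not all of $X$, which is indeed all the statement claims. The constructibility step and the reduction to an irreducible base are routine bookkeeping, and once $\bar G_U\to U$ is known to be finite and \'etale the conclusion is immediate from Lemma \ref{lem:closed-conn-comp-unity}.
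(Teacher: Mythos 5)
Your proposal is correct and follows essentially the same route as the paper: the paper's argument is precisely to apply generic finiteness (Zariski's main theorem, \cite[Lem.\ 03I1]{SP}) to the finitely presented \'etale quotient $\bar G = G/G^0 \to X$ to make it finite \'etale over a nonempty open $U$, and then invoke Lemma \ref{lem:closed-conn-comp-unity} to conclude that $G_U^0$ is closed. You simply spell out the spreading-out details (reduction to an irreducible base, finiteness of the generic fibre, Chevalley's constructibility) and the base-change compatibility of $G^0$ and $G/G^0$, which the paper leaves implicit.
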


\subsection{Centre and centralizers} 

We refer the reader to  \cite[\S 2.2]{Conrad} for a definition of \emph{functorial centralizer}, $Z_G (Y)$, for a closed subscheme $Y$ of a group scheme $G \to X$. It is not generally true that these functors are representable by schemes. However:

\begin{cor}\label{cor:center-exists-after-strat} Let $G$ be a smooth group scheme over an integral base scheme $X$. There exists a nonempty Zariski open $U$ in $X$, such that $G_U$ has a closed subscheme representing its centre. 
\end{cor}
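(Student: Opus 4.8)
The plan is to reduce the representability of the centre $Z(G_U) = Z_{G_U}(G_U)$ to the representability of the centralizer of a single closed subscheme, and then apply the stratification machinery already developed. First I would use Corollary \ref{cor:closed-conn-comp-unity}: after passing to a nonempty open $U \subseteq X$ we may assume $G_U^0$ is closed in $G_U$ and $\bar G_U = G_U/G_U^0$ is finite and étale over $U$. Shrinking $U$ further (using that $X$ is integral, so any nonempty open is still integral and dense), we may assume $\bar G_U$ is a \emph{constant} finite group scheme over $U$, so $G_U$ is generated, as a group scheme, by its identity component $G_U^0$ together with finitely many sections $\sigma_1,\dots,\sigma_m \colon U \to G_U$ representing coset representatives of $\bar G_U$.

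The key observation is then that the functorial centre of $G_U$ is the scheme-theoretic intersection of two centralizers: the centralizer of the closed subgroup scheme $G_U^0$, and the centralizer of the closed subscheme $\Sigma = \sigma_1(U) \cup \cdots \cup \sigma_m(U)$ (a disjoint union of copies of $U$, hence closed in $G_U$ since each $\sigma_i$ is a section of a separated morphism). Indeed, an $S$-point of $G_U$ centralizes all of $G_{U,S}$ if and only if it centralizes the identity component and each $\sigma_i$, because these generate $G_U$ fppf-locally. So the plan is:
\begin{enumerate}
\item Show $Z_{G_U}(\Sigma)$ is representable by a closed subscheme. Since $\Sigma = \coprod_i \sigma_i(U)$, we have $Z_{G_U}(\Sigma) = \bigcap_i Z_{G_U}(\sigma_i(U))$, and each $Z_{G_U}(\sigma_i(U))$ is the equalizer of the two morphisms $G_U \to G_U$ given by $g \mapsto g\sigma_i$ and $g \mapsto \sigma_i g$ (using that $\sigma_i$ is a global section); this equalizer is closed because $G_U$ is separated over $X$. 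A finite intersection of closed subschemes is closed.
\item Show $Z_{G_U}(G_U^0)$ is representable by a closed subscheme. Here I would invoke \cite[\S 2.2]{Conrad}: for a \emph{smooth} closed subgroup scheme of a smooth group scheme over a base, the functorial centralizer is representable; and it is closed because, after the reduction above, $G_U^0 \to U$ has connected geometric fibres so the fibrewise centralizer criterion of \cite[Exp.~VI(B)]{SGA3}-type arguments shows the centralizer is closed (alternatively: on each fibre $Z_{G_x}(G_x^0)$ is closed in $G_x$, being a centralizer of a subgroup of an algebraic group over a field, and one promotes this to a closed condition over $U$ after possibly shrinking).
\item Intersect: $Z(G_U) = Z_{G_U}(G_U^0) \cap Z_{G_U}(\Sigma)$ is then a closed subscheme of $G_U$ representing the centre.
\end{enumerate}

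The main obstacle I anticipate is Step 2, namely getting a \emph{closed} (not merely locally closed or quasi-affine) model of $Z_{G_U}(G_U^0)$ uniformly over the base. The difficulty is that representability results for centralizers in families typically only guarantee representability by a \emph{locally closed} or quasi-affine subscheme without extra hypotheses; closedness is a fibrewise-closed condition that must then be spread out. I would handle this by combining the smoothness of $G_U^0 \to U$ and the constancy of $\bar G_U$ with a Zariski-main-theorem / constructibility argument exactly in the spirit of Lemma \ref{lem:closed-conn-comp-unity} and Corollary \ref{cor:closed-conn-comp-unity}: the locus where the centralizer fails to be closed is constructible and fibrewise-small, so it is avoided on a dense open, and integrality of $X$ guarantees that shrinking finitely many times leaves a nonempty open. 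If even representability of $Z_{G_U}(G_U^0)$ is delicate, a fallback is to stratify $G_U^0$ itself into subschemes on which the adjoint action has well-behaved fixed loci and argue by noetherian induction, but I expect the direct argument via Conrad's results plus spreading out to suffice.
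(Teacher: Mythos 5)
There is a genuine gap, and it sits at the very first reduction. You claim that after shrinking $U$ in the Zariski topology we may assume $\bar G_U = G_U/G_U^0$ is a \emph{constant} finite group scheme. This is false: a finite \'etale group scheme is only constant after an \'etale (e.g.\ finite \'etale) base change, not on a dense Zariski open — think of a nontrivial form of $\mb Z/3$ twisted along a connected double cover of a curve; no nonempty Zariski open trivializes it. Your whole architecture depends on this, because you then want global sections $\sigma_1,\dots,\sigma_m\colon U \to G_U$ giving coset representatives, and you compute the centre as $Z_{G_U}(G_U^0)\cap Z_{G_U}(\Sigma)$ with $\Sigma=\bigcup_i\sigma_i(U)$. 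Even if $\bar G_U$ were constant, such sections need not exist: each non-identity component of $G_U$ is a $G_U^0$-torsor over $U$, and torsors under a connected smooth group over the (non-closed) function field of $X$ can be nontrivial, so no amount of Zariski shrinking produces the $\sigma_i$. The paper avoids both problems: it passes to a finite \'etale cover $\tilde X \to X$ trivializing $\bar G$, notes that each connected component of $G|_{\tilde X}$ is then a flat closed subscheme (so \cite[Lem.~2.2.4]{Conrad} applies to it directly, with no sections needed, and yields a \emph{closed} centralizer), takes the centre of $G|_{\tilde X}$ as the finite intersection of these centralizers, and finally descends this affine $\tilde X$-scheme to $X$ along the fpqc cover $\tilde X \to X$. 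That descent step is the essential ingredient your proposal has no substitute for; without it the argument cannot be carried out over a Zariski open of $X$ alone.

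A secondary remark: the obstacle you flag in your Step 2 (closedness of $Z_{G_U}(G_U^0)$ and the proposed spreading-out/ZMT workaround) is not where the difficulty lies. Conrad's Lemma 2.2.4 already represents the centralizer of any flat, finitely presented closed subscheme by a closed subscheme of $G$, so once one works with whole connected components over the trivializing cover, closedness is automatic; the genuine work is the reduction to that situation and the descent back down, exactly the two points missing from your plan.
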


\begin{proof}
By Corollary \ref{cor:closed-conn-comp-unity} we may assume that $G^0$ is a closed and open connected subscheme and $\bar G= G/ G^0$ is a finite \'etale group scheme over $X$. 
Let $\tilde X \to X$ be a finite \'etale cover such that $G|_{\tilde X}$ is a constant finite group over $\tilde X$; i.e. a union of connected components all of which are isomorphically mapped to $\tilde X$. Since 
$G \to \bar G$ is a torsor for a connected group, so is $G|_{\tilde X} \to \tilde X$. Therefore the connected components of the source and target correspond bijectively. But every connected component of $\bar G|_{\tilde X}$ maps isomorphically to $\tilde X$, thus each connected component of $G|_{\tilde X}$ is isomorphic to $G^0$. Now by \cite[Lem. 2.2.4]{Conrad}, the centralizer of each connected component exists over $\tilde X$ and their intersection is the centre of $G|_{\tilde X}$. Finally $\tilde X \to X$ is \'etale and in particular an fpqc covering and the centre of $G|_{\tilde X}$ is affine over $\tilde X$, so affine descent finishes the proof.
\end{proof}

\subsection{Groups of multiplicative type} 
\label{sec:max-tori}

\subsubsection{Quasi-split tori}

A commutative group scheme $T \to X$ is said to be of multiplicative type if it is locally diagonalizable over $X$ in the fppf topology (and therefore in the \'etale topology \cite[Exp. X, Cor. 4.5]{SGA3}). For the general theory of group schemes of multiplicative type we refer the reader to \cite[Ch. IIIV--X]{SGA3}, however we recall a few preliminary facts here. Associated to $T$ there exists \cite[Exp. X, Prop. 1.1]{SGA3} a locally constant \'etale abelian sheaf 
$$T \mapsto M= \uHom_{X-\text{gp}} (T, \mb G_m),$$
and $T$ is the scheme representing the sheaf $\uHom_{X-\text{gp}} (M, \mb G_m)$. This is an anti-equivalence of the categories of $X$-group schemes of multiplicative type and locally constant \'etale sheaves on $X$ whose geometric fibers are finitely generated abelian groups.  

Since every \'etale morphism is Zariski locally finite \cite[Lem. 03I1]{SP},
by shrinking $X$ to a dense open subset of it, we may assume that $T$ is \emph{isotrivial}; i.e. isomorphic to $H \times \mb G_m^r$ after base change along a finite \'etale cover of $X' \to X$. Here $H$ is a constant finite commutative group. Equivalently, $T_{X'}$ is isomorphic to $\uHom_{X'-\text{gp}} (M_{X'}, \mb G_{m, X'})$ where $M$ is a finitely generated abelian group. 

We may also assume that $X' \to X$ is connected and Galois by \cite[Proposition 6.18]{Puttick}). Let $\Gamma$ be the associated Galois group. The action of $\Gamma$ on $X'$ induces an action of it on $M_{X'}$. 

An $X$-torus, $T$, is an $X$-group scheme which is fppf locally isomorphic to $\mb G_{m, X}^r$. This is equivalent to asking for $M$ to be torsion-free. We say $T$ \emph{splits} over $X'$ if $T_{X'} = \mb G^r_{m, X'}$. Our anti-equivalence of categories is now between isotrivial $X$-tori that split over $X'$ and $\Gamma$-lattices (i.e. a finitely generated torsion free abelian groups equipped with the structure of some $\Gamma$-module of finite type) given by 
$$T \mapsto H^0 (X', \uHom_{X'-\text{gp}} (X' \times_X T , \mb G_m))$$
and by 
$$ A \mapsto \uHom_{X-\text{gp}} (X' \times A / \Gamma, T)$$ 
in the reserve direction where $A$ is a $\Gamma$-lattice. In this case, $\chi_T= M_{X'}$ is called the \emph{character lattice} of $T$. 
$T$ is called a \emph{quasi-split} torus if $\chi_T$ is a permutation $\Gamma$-lattice (i.e. the action of $\Gamma$ on $\chi_T$ is by permutation of the elements of a $\mb Z$-basis). 

\subsubsection{Maximal tori of group schemes} 

We recall \cite[Expos\'e IXX, Def. 1.5]{SGA3} that the \emph{reductive rank}, of algebraic $k$-group $G$, is the rank of a maximal torus $T$ of $G_{\bar k}$ where $\bar k$ is the algebraic closure of $k$: 
$\rho_r (G) = \dim_{\bar k} T.$
Likewise, the unipotent rank of $G$ is the dimension of the unipotent radical $U$ of $G_{\bar k}$ and denoted by
$\rho_u (G) = \dim_{\bar k} U.$

For an affine smooth $X$-group scheme $G$, the above integers can be more generally considered as functions on $X$ that assign to every point $x \in X$, the corresponding 
$$\rho_r (x) = \rho_r (G_x), \quad \text{and} \quad \rho_u (x) = \rho_u (G_x).$$ 
The function $\rho_r$ is lower semi-continuous in the Zariski topology \cite[Expos\'e XII, Thm. 1.7]{SGA3}. Moreover, the condition of $\rho_r$ being a locally constant function in the Zariski topology, is equivalent to existence of a global maximal $X$-torus for $G$ in the \'etale topology by \cite[Expos\'e XII, Thm. 1.7]{SGA3}. If $G$ is commutative, then this is furthermore equivalent to existence of a global maximal $X$-torus for $G$ in the Zariski topology \cite[Expos\'e XII, Cor. 1.15]{SGA3}. This immediately implies the following 

\begin{prop}\label{spread-out-maximal-torus} Let $G$ be an affine smooth group scheme over a noetherian base scheme $X$. Then there exists a stratification of $X$ by finitely many locally closed Zariski subschemes $\{ X_i\}$ such that each group scheme $G|_{X_i}$ admits an isotrivial maximal torus. If $G$ is moreover commutative, $G|_{X_i}$ admits a maximal torus in Zariski topology. 
\end{prop}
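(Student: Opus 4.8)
The plan is to stratify $X$ according to the reductive rank $\rho_r$, then apply the cited results of \cite{SGA3} one stratum at a time, using noetherian induction to promote an \'etale-local maximal torus to an isotrivial one.

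Since $\rho_r$ is lower semi-continuous in the Zariski topology \cite[Exp. XII, Thm. 1.7]{SGA3} and is bounded above by the relative dimension of $G$ over $X$ (which is locally constant, $G$ being smooth), on the noetherian scheme $X$ it takes only finitely many integer values, and each of its level sets $X^{(n)} = \{x\in X : \rho_r(x) = n\}$ is the set-theoretic difference $\{\rho_r \le n\}\setminus\{\rho_r \le n-1\}$ of two closed sets, hence locally closed. Thus the $X^{(n)}$ form a finite stratification of $X$ on each of whose strata $\rho_r$ is constant, a fortiori locally constant; so by \cite[Exp. XII, Thm. 1.7]{SGA3} the group scheme $G|_{X^{(n)}}$ admits a maximal torus in the \'etale topology (and, if $G$ is commutative, one in the Zariski topology, by \cite[Exp. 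XII, Cor. 1.15]{SGA3}).

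It remains to refine each stratum $Y = X^{(n)}$ so that its maximal torus $T$ becomes isotrivial; I argue by noetherian induction on $Y$. By the same reasoning as above---every \'etale morphism being Zariski locally finite \cite[Lem. 03I1]{SP}---after shrinking $Y$ to a dense open $V$ we may assume $T|_V$ is isotrivial, and $T|_V$ remains a maximal torus of $G|_V$ (in the \'etale, resp. Zariski, topology) because $\rho_r$ is unaffected by base change. Applying the inductive hypothesis to the noetherian closed complement $Y \setminus V$ and adjoining the resulting strata to $V$ refines $Y$ into finitely many locally closed subschemes each carrying an isotrivial maximal torus of $G$ (Zariski-local if $G$ is commutative). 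Carrying this out for every $n$ yields the required finite stratification $\{X_i\}$, and the commutative clause follows at the same time.

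I expect the only genuine obstacle to be this last step: passing from a maximal torus that exists merely \'etale-locally on a constant-rank stratum to one which, after shrinking, splits over a \emph{finite} \'etale cover, i.e. is isotrivial. Packaging the two successive refinements (first by $\rho_r$, then by a trivializing cover) into a single finite stratification is exactly what the noetherian induction accomplishes, and the remaining verifications---that maximal tori, and the affineness and smoothness of $G$, survive the restrictions involved---are routine.
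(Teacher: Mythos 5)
Your proposal is correct and follows essentially the same route as the paper: stratify by the (lower semi-continuous, integer-valued) reductive rank $\rho_r$, invoke \cite[Exp. XII, Thm. 1.7 and Cor. 1.15]{SGA3} on each constant-rank stratum to get a maximal torus (\'etale-locally, or Zariski-locally in the commutative case), and then use the fact that \'etale morphisms are Zariski locally finite to shrink so that the trivializing cover is finite, i.e. the torus is isotrivial. The only difference is that you spell out the noetherian induction needed to turn the generic-shrinking step into a genuine finite stratification, which the paper leaves implicit in the phrase ``by our earlier discussion.''
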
 

\begin{proof} Since $\rho_r$ is lower semi-continuous and integer valued there exists a stratification by locally closed subspaces on which $\rho_r$ is constant. By our earlier discussion, we may further assume each  global torus is isotrivial.
\end{proof} 
 
\subsection{Commutative group schemes}

An affine group scheme $U \to X$ is said to be unipotent if it is unipotent over each geometric fibre. The goal of this section is to prove the following structure decomposition for a commutative group scheme. 
\begin{prop}\label{cor:commutative-structure-spread-out} 
Let $X$ be an integral scheme and $G$ a finitely presented smooth affine commutative group scheme over $X$. Let $\eta$ be the generic point of $X$. Then the decomposition of $G_\eta$ as $T_\eta \times U_\eta$ to a maximal torus and the unipotent radical spreads out; i.e. there exists a non-empty Zariski open $V \subseteq X$ such that  $G_V$ is isomorphic to $T_V \times U_V$ where $T_V$ is a maximal torus with $T_\eta$ as generic fiber and $U_V$ is a unipotent $V$-group scheme with $U_\eta$ as generic fiber. Moreover, if $T_\eta$ is quasi-split we may assume $T_V$ is also quasi-split. 
\end{prop}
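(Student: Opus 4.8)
The plan is to manufacture the decomposition over a neighbourhood of $\eta$ in three stages, each time shrinking $X$ to a nonempty open; this is harmless, since only a nonempty $V$ is sought and $X$ is integral. We may also assume $X$ reduced from the start.

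\emph{The torus.} Applying Proposition~\ref{spread-out-maximal-torus} in the commutative case and keeping the stratum through the generic point $\eta$ (which is open), we may assume $G$ carries a maximal $X$-torus $T$ in the Zariski topology, and, shrinking further, that $T$ is isotrivial, split by a connected Galois cover $X'\to X$ with group $\Gamma$ whose generic fibre $X'_\eta=\spec L$ is connected (spread out the Galois closure $L$ of a splitting field of $T_\eta$, together with its $\Gamma$-action). The generic fibre $T_\eta$ is then a maximal torus of $G_\eta$, and since the maximal subtorus of a commutative affine group over a field is unique, $T_\eta$ is precisely the torus appearing in the given decomposition. Moreover $T_\eta$ is split by $L/\kappa(\eta)$, which is Galois with the same group $\Gamma$, and its character lattice over $L$ coincides, as a $\Gamma$-module, with $\chi_T$; hence $T$ is quasi-split exactly when $T_\eta$ is, which settles the last clause.

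\emph{The unipotent factor.} Via the splitting, $U_\eta$ is a smooth connected closed subgroup scheme of $G_\eta$, so by a limit argument it spreads out, after shrinking $X$, to a closed subgroup scheme $\mc U\subseteq G$ which is finitely presented, affine (being closed in $G$), and — after one more shrinking — smooth over $X$ with generic fibre $U_\eta$. Since $T$ and $\mc U$ are commuting subgroup schemes of the commutative group $G$, multiplication defines a homomorphism $\phi\colon T\times_X\mc U\to G$ of $X$-group schemes that restricts over $\eta$ to the given isomorphism $T_\eta\times_\eta U_\eta\longiso G_\eta$. Because $\rho_r$ is lower semi-continuous and vanishes on $\mc U_\eta$, after a further shrinking $\rho_r(\mc U_x)=0$ for every $x\in X$; each $\mc U_x$ is then a smooth connected affine group of reductive rank $0$, hence unipotent, so $\mc U$ is a unipotent $X$-group scheme (connectedness of the fibres of $\mc U$ near $\eta$ being a standard constructibility fact, $U_\eta$ being connected).

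\emph{Promoting $\phi$ to an isomorphism.} The kernel $N=\phi^{-1}(e)$ is a closed, finitely presented subgroup scheme of $T\times_X\mc U$ with $N_\eta=\{e\}$; deleting the identity section and discarding the image in $X$ of what remains — a constructible set avoiding $\eta$ — we may assume $N$ trivial, so $\phi$ is a monomorphism, hence a closed immersion of smooth $X$-group schemes. On each fibre, $\phi_x$ is then a closed subgroup of $G_x$ whose dimension equals that of $G_x$ (the relative dimensions over $X$ are locally constant and match over $\eta$), hence an open subgroup, in particular an isomorphism as soon as it is surjective. The complement of the image of the closed immersion $\phi$ is open in $G$, with constructible image in $X$ missing $\eta$ (where $\phi$ is already an isomorphism); discarding it makes $\phi$ surjective, and a surjective closed immersion into the reduced scheme $G$ is an isomorphism. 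With $V$ the open so obtained and $T_V=T$, $U_V=\mc U$, this proves the proposition. I expect the genuine obstacle to be this last stage: extracting an honest isomorphism, not merely a morphism, from the generic decomposition forces one to control both the kernel and the cokernel of the spread-out map and to verify that smoothness, affineness and connectedness (hence unipotence) of the fibres all survive on a neighbourhood of $\eta$ — a chain of standard but somewhat delicate constructibility and limit arguments.
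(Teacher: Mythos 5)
Your overall strategy is the same as the paper's: spread out the maximal torus by Proposition~\ref{spread-out-maximal-torus}, identify its character lattice with that of $T_\eta$ as a $\Gamma$-module to get the quasi-split clause (this is exactly Lemma~\ref{lem:quasi-split-tori-spread-out}), spread out the unipotent factor as a closed subgroup scheme (Lemma~\ref{lem:space-spreading-out}), and then globalize the product decomposition over an open neighbourhood of $\eta$. You differ in the two supporting steps. For unipotence of the spread-out subgroup you use lower semicontinuity of $\rho_r$ plus constructibility of fibrewise connectedness, where the paper instead spreads out a filtration with $\mb G_a$-quotients (Lemma~\ref{lem:unipotency-spreading-out}); both are fine, yours is a little slicker, the paper's yields extra structure. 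For the decomposition itself the paper simply invokes Lemma~\ref{lem:group-homomorphisms-spreading-out}: the isomorphism $T_\eta\times U_\eta\to G_\eta$ spreads out (e.g.\ spread out its inverse as well and note that two morphisms of finitely presented $X$-schemes agreeing over $\eta$ agree over a nonempty open), which disposes of your third stage in one line.

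Your third stage, as written, has two fragile assertions. First, removing the identity section from $N=\ker\phi$ and shrinking only makes the kernel trivial \emph{set-theoretically}; to conclude that $\phi$ is a monomorphism you need scheme-theoretic triviality, which requires an extra word (e.g.\ reducedness of the fibres of $N$, automatic in characteristic zero, or a fibrewise argument). Second, ``monomorphism hence closed immersion'' is a theorem for group schemes over a field, but over a base it is not automatic and needs justification or further shrinking. Both points are repairable without new ideas: the locus of $x\in X$ where $\phi_x$ is an isomorphism is constructible (\cite{EGA}, IV 9.6.1) and contains $\eta$, hence contains a nonempty open $V$; over $V$ the fibrewise criterion of flatness makes $\phi$ flat, and a flat, surjective, finitely presented monomorphism is an isomorphism. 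Alternatively, drop the kernel/image analysis altogether and use the paper's route of spreading out the inverse morphism. So the proposal is correct in outline and close to the paper, but the endgame should either be patched as above or replaced by Lemma~\ref{lem:group-homomorphisms-spreading-out}.
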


We say a group scheme $H_\eta \to X$ (or a property of it with respect to another group scheme $G_\eta$) \emph{spreads out} to a neighborhood of the generic point $\eta \in X$ if 
there exists a dense open subset $U \subset X$ over which there is a $U$-group scheme $H|_U$ pulling back to the prior one 
(and satisfying the same property with respect to a spreading out $G|_U$ of $G_\eta$). 

\begin{lem}\label{lem:space-spreading-out} Let $X$ be an integral scheme and $G$ a finitely presented affine group scheme over $X$. Then closed subgroups of the generic fiber spread out: i.e. 
let $\eta$ be the generic point of $X$ and $H_\eta$ a closed subgroup of $G_\eta$. Then there exists a non-empty open 
set $U \subseteq X$ such that $G|_U$ contains a subgroup scheme $H_U$ fitting in the commutative diagram 
$$\diag{H_\eta \ar[d] \ar[r]& H_U \ar[d]\\ G_\eta \ar[r] & G_U }$$
where the horizontal arrows are pull-back morphisms and the vertical arrows are monomorphisms of group schemes. 
\end{lem}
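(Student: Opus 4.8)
The plan is to derive this from the standard ``spreading-out'' formalism for cofiltered limits of schemes (the limit results of \cite{SP} on finitely presented schemes, on morphisms between them, and on closed immersions). The key observation is that the generic fibre is, functorially, a cofiltered limit: choosing a nonempty affine open $U_0 = \Spec A \subseteq X$, integrality of $X$ makes $A$ a domain and exhibits $\kappa(\eta) = \mathrm{Frac}(A) = \varinjlim_{f \in A \setminus \{0\}} A_f$, so that $\Spec \kappa(\eta) = \varprojlim_f D(f)$ is a cofiltered limit with affine transition morphisms, while $G|_{U_0}$ is of finite presentation over $U_0$. Moreover $G_\eta$, being of finite type over a field, is noetherian, so the closed subgroup $H_\eta \hookrightarrow G_\eta$ is automatically of finite presentation, which is what lets the limit lemmas apply.

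First I would invoke the limit results to descend the closed subscheme $H_\eta$ to a finitely presented closed subscheme $H_1 \hookrightarrow G|_{D(f_1)}$ over some basic open $D(f_1) \subseteq U_0$, and then descend the multiplication and inversion of $H_\eta$ to morphisms $m_1 \colon H_1 \times_{D(f_1)} H_1 \to H_1$ and $\iota_1 \colon H_1 \to H_1$ over a possibly smaller $D(f_2)$, compatibly (after base change to $\eta$) with the given structure on $H_\eta$. Next I would observe that ``$(H, m, \iota)$ is a subgroup scheme of $G$ whose identity is induced from that of $G$'' is encoded by a \emph{finite} list of commuting diagrams: associativity, the two unit axioms, the two inverse axioms, compatibility of $j \colon H \hookrightarrow G$ with $m$ and $\iota$, and the statement that the identity section $e$ of $G$ factors through $H$. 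Each of these is an equality of two morphisms of finitely presented $D(f_2)$-schemes whose target is affine over $D(f_2)$ — here one uses that $G \to X$, hence $H_1 \to D(f_2)$, is affine, so the target is separated and the equalizer is a closed subscheme of the source; since each such equality holds after base change to $\eta$, it holds already over some basic open. Intersecting these finitely many opens yields a nonempty open $U \subseteq U_0$ over which $(H_U := H_1|_U,\, m_1|_U,\, e|_U,\, \iota_1|_U)$ is a group scheme and $j_U \colon H_U \hookrightarrow G|_U$ is a closed immersion — in particular a monomorphism — of group schemes. Base-changing $j_U$ along $\eta \to U$ recovers $H_\eta \hookrightarrow G_\eta$, which is precisely the commutative square asserted.

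The main obstacle is not any single deep input — every ingredient is a citation to the limit formalism — but the bookkeeping of arranging \emph{all} the data (the subscheme, the two structure morphisms, the validity of each group axiom, and the factoring of the identity section) over one common nonempty open. This works because a cofiltered system of opens of an integral scheme is directed, only finitely many conditions are imposed, and the affineness hypothesis on $G$ makes every ``equality of morphisms'' locus closed in the source and hence (its complement being constructible and missing $\eta$) removable by passing to a smaller open. I would spell out the reduction to $U_0$ and the identification of $\kappa(\eta)$ with the relevant colimit once, and then treat the descent of each piece in the uniform manner described above.
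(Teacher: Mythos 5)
Your proposal is correct and is essentially the paper's argument: the paper also exploits that $\kappa(\eta)=\varinjlim_{f} R_f$ (i.e.\ $\Spec\kappa(\eta)$ is the limit of the basic opens) and spreads out the finitely many pieces of finitely presented data — the defining ideal of $H_\eta$, then the multiplication and inversion — by clearing denominators over successively smaller $D(f)$'s, just written out by hand on coordinate rings rather than by citing the general limit formalism. The only cosmetic difference is that the paper shrinks $U$ so that the multiplication and inversion of $G_U$ preserve $H_U$ (so the group structure is the induced one), whereas you descend the structure maps of $H_\eta$ and verify the axioms and compatibilities as finitely many equalities of morphisms; both are routine instances of the same spreading-out principle.
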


\begin{proof} It suffices to consider the case where $X$ is an affine scheme $X= \Spec R$. Let $K$ be the function field 
of $R$ with the canonical homomorphism $R \to K$ corresponding to the inclusion of the generic point $\eta \to X$. Then $G = \Spec S$, where $S= R[x_1, \ldots, x_k]/ \mf I$ 
for some finitely generated ideal $\mf I= \langle p_1, \ldots, p_\ell\rangle \subset R[x_1, \ldots, x_k]$ and therefore $G_\eta = \Spec K \tensor_R S$. 
With this notation, $H_\eta$ is cut out as a subscheme by a finitely generated ideal $\mf p \subseteq K \tensor_R S = K[x_1, \ldots, x_k]/ \mf I K$. Thus 
each generator of $\mf p$ can be considered as a polynomial with coefficients in $K$. Since $K$ is the inverse limit of localizations of $R$ in its 
elements, there exists $f \in R$ such that all elements of $\mf p$ are defined with coefficients in $R_f$. This defines a subscheme $H_U$
of $G_U$ satisfying the commutativity of the above diagram, if we set $U = \Spec R_f$. 

Now we put a group scheme structure on $H_U$ by shrinking $U$ further. Let $i: G \to G$ and $m: G \times_X G \to G$, respectively be the inversion and multiplication morphisms on $G$. Considering the inversion morphism, existence of group structure on $H_\eta$ means that in level of coordinate rings, we are given a commutative diagram 
$$\diag{ R_f[x_1, \ldots, x_k] / \mf I R_f \ar^{i^\#}[r]\ar[d] & R_f[x_1, \ldots, x_k] / \mf I R_f  \ar[d]\\
K[x_1, \ldots, x_k]/  \mf I K \ar[r]^{\tilde{i^\#}} & K[x_1, \ldots, x_k] / \mf I K \ar[r]^q & K[x_1, \ldots, x_k] / \mf p \mf I K}$$
and the composition of the induced morphism $\tilde{i^\#}$ and the quotient map $q$ has precisely $\mf p \mf I K$ as its kernel. Hence by a similar argument, there exists some $g \in R_f$ lifting this composition as in the cartesian diagram
$$\diag{R_{fg}[x_1, \ldots, x_k]/ \mf p \mf I R_{fg} \ar[r]\ar[d]& R_{fg} [x_1, \ldots, x_k]/ \mf p \mf I R_{fg}\ar[d]\\
K[x_1, \ldots, x_k]/ \mf p \mf I K \ar[r]& K[x_1, \ldots, x_k]/ \mf p \mf I K.}$$ 
The case of multiplication morphism is similar. So by shrinking further we may assume that $H_U$ is a $U$-group scheme. 
Commutativity of the diagram
$$\diag{ H_U \times H_U \ar[r] \ar@^{(->}[d] & H_U  \ar@^{(->}[d]\\ G_U \times G_U \ar[r] & G_U} $$
where the horizontal arrows are morphisms $(x, y) \mapsto x y^{-1}$ is now obvious. Thus $H_U$ is the desired subgroup scheme of $G_U$.
\end{proof}

\begin{lem}\label{lem:group-homomorphisms-spreading-out} Group homomorphisms (repectively isomorphisms) spread out. Let $G \to X$ and $\eta \in X$ be as in previous lemma. If $G' \to X$ is another group scheme and $\phi_\eta: G_\eta \to G'_\eta$ is a group scheme homomorphism (resp. isomorphism), then there exists a non-empty $U \subseteq X$ and a homomorphism (resp. isomorphism) $\phi: G'_U \to G_U$ such that $\phi|_\eta = \phi_\eta$. 
\end{lem}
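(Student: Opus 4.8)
The plan is to run exactly the reduction of Lemma~\ref{lem:space-spreading-out}: descend everything to an affine base and observe that every condition involved is a \emph{finite} list of polynomial identities among elements of finitely generated algebras, so that ``holds at $\eta$'' upgrades to ``holds over some $R_f$'' by clearing finitely many denominators. So I would take $X=\Spec R$ integral with function field $K$, write $G=\Spec S$ and $G'=\Spec S'$ with $S=R[x_1,\dots,x_k]/\mf I$ and $S'=R[y_1,\dots,y_m]/\mf J$ for finitely generated ideals $\mf I,\mf J$ (finite presentation of $G,G'$ is used here), and note $G_\eta=\Spec(K\tensor_R S)$, $G'_\eta=\Spec(K\tensor_R S')$.

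The homomorphism $\phi_\eta$ is a $K$-algebra map $\phi_\eta^\#\colon K\tensor_R S'\to K\tensor_R S$, determined by the images $\phi_\eta^\#(y_j)\in K[x_1,\dots,x_k]/\mf I K$, each a polynomial with finitely many coefficients in $K=\varinjlim_f R_f$. Hence there is a single $f\in R$ with all these coefficients in $R_f$, giving a candidate $R_f$-algebra map $\tilde\phi^\#\colon R_f[y_1,\dots,y_m]\to R_f\tensor_R S$. To see that $\tilde\phi^\#$ kills $\mf J R_f$, note that the images of the finitely many generators of $\mf J$ lie in $R_f\tensor_R S$ and vanish after $\tensor_{R_f}K$ (since $\phi_\eta^\#$ is well defined), hence vanish after inverting one further element; so after shrinking we get a genuine $R_f$-algebra map $\phi^\#\colon R_f\tensor_R S'\to R_f\tensor_R S$, i.e.\ a morphism $\phi\colon G_U\to G'_U$ with $U=\Spec R_f$ and $\phi|_\eta=\phi_\eta$ by construction. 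That $\phi$ is a group-scheme homomorphism is the single requirement that it respect the comultiplications (compatibility with the unit and inversion then follows formally), which is again a finite set of polynomial identities in the coordinate rings holding over $K$, hence over a further localization $R_{fg}$, exactly as in the inversion-and-multiplication step of Lemma~\ref{lem:space-spreading-out}. Replacing $U$ by $\Spec R_{fg}$ finishes the homomorphism case.

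For the isomorphism case I would apply the homomorphism case to $\phi_\eta$ and separately to $\phi_\eta^{-1}$, obtaining $\phi\colon G_{U_1}\to G'_{U_1}$ and $\psi\colon G'_{U_2}\to G_{U_2}$, and pass to $U=U_1\cap U_2$. The composites $\psi\circ\phi$ and $\phi\circ\psi$ are then endomorphisms of $G_U$ and $G'_U$ restricting to the identities over $\eta$. For each of the finitely many generators $z$ of the coordinate ring of $G_U$ as an algebra over the coordinate ring of $U$, the element $(\psi\circ\phi)^\#(z)-z$ becomes zero after base change to $K$, hence becomes zero already after inverting a single element; handling all generators (and $G'_U$ likewise) and shrinking $U$ accordingly gives $\psi\circ\phi=\id$ and $\phi\circ\psi=\id$ over the smaller open, so $\phi$ is an isomorphism extending $\phi_\eta$.

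The only thing requiring care is the recurring bookkeeping point: each condition — well-definedness of $\tilde\phi^\#$, compatibility with comultiplication, and equality of two morphisms at $\eta$ — is a \emph{finite} list of equations among elements of finitely generated $R$-algebras, so that passing from $K$ to a localization $R_f$ costs only finitely many denominators. This is precisely where finite presentation of $G$ and $G'$ enters, and no idea beyond Lemma~\ref{lem:space-spreading-out} is needed.
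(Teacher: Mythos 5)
Your proposal is correct and is exactly the approach the paper intends: the paper's proof of this lemma consists of the single remark that it follows ``by similar arguments as in previous lemma,'' and your write-up is a faithful, detailed execution of that spreading-out argument (clearing finitely many denominators for the map, the ideal containment, compatibility with the group structure, and, for the isomorphism case, spreading out the inverse and forcing the composites to be the identity after a further shrink).
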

\begin{proof} The proof is by similar arguments as in previous lemma.
\end{proof}

\begin{lem}\label{lem:quasi-split-tori-spread-out} The property of being a quasi-split torus spreads out. Let $G \to X$ and $\eta \in X$ be as in the previous lemmas. If $G_\eta$ is a quasi-split torus, then there exists a non-empty $U \subseteq X$ such that $G_U$ is a quasi-split torus. 
\end{lem}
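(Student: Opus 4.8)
The plan is to combine the spreading-out lemmas already established for general affine group schemes with the structure theory of tori of multiplicative type recalled in \S\ref{sec:max-tori}. Since $G_\eta$ is a quasi-split torus, it is in particular smooth, commutative, and of multiplicative type, so by Proposition \ref{spread-out-maximal-torus} (applied to the commutative case) we may first shrink $X$ so that $G_U$ is of multiplicative type over $U$; equivalently, $G_U$ corresponds under the anti-equivalence of categories to a locally constant \'etale sheaf $M$ on $U$ with finitely generated abelian group fibers. Because every \'etale morphism is Zariski locally finite \cite[Lem. 03I1]{SP}, we may shrink $U$ further so that $M$ is trivialized by a connected Galois cover $U' \to U$ with group $\Gamma$, making $G_U$ isotrivial; and since $G_\eta$ is a torus its character sheaf has torsion-free fibers, so $G_U$ is an isotrivial $U$-torus split by $U'$.

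It then remains to verify that the $\Gamma$-lattice $\chi_{G_U} = M_{U'}$ is a permutation $\Gamma$-lattice. This is where I would use that the analogous statement holds at the generic point: the character lattice of $G_\eta$ is $\chi_{G_\eta} = M_{\eta'}$ where $\eta'$ is the generic point of $U'$ (lying over $\eta$), and the $\Gamma$-action on $M_{\eta'}$ is exactly the restriction of the $\Gamma$-action on $M_{U'}$. Since $U'$ is connected and $M$ is locally constant, the restriction map $M_{U'} = H^0(U', M) \to M_{\eta'}$ is a $\Gamma$-equivariant isomorphism of abelian groups. Hence the $\Gamma$-lattice $\chi_{G_U}$ is $\Gamma$-isomorphic to $\chi_{G_\eta}$, which is a permutation $\Gamma$-lattice by hypothesis because $G_\eta$ is quasi-split. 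Therefore $G_U$ is a quasi-split torus, which is what we wanted; one may also shrink $U$ once more, if needed, to arrange via Lemma \ref{lem:group-homomorphisms-spreading-out} that the generic-fiber identification is induced by an honest isomorphism of group schemes over $U$.

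The main obstacle I anticipate is the bookkeeping around Galois covers: one needs the \emph{same} cover $U' \to U$ to simultaneously trivialize $M$ and be connected and Galois, and one needs to know that the Galois group $\Gamma$ at the generic fiber agrees with the one over $U$ under the chosen trivialization. This is handled by the discussion in \S\ref{sec:max-tori} (in particular \cite[Proposition 6.18]{Puttick} for replacing an isotrivializing cover by a connected Galois one) together with the observation that, after shrinking, the generic point $\eta'$ of $U'$ is a $\Gamma$-invariant point mapping to $\eta$, so pulling back the whole picture along $\eta \to U$ recovers precisely the quasi-split-torus data of $G_\eta$ together with its $\Gamma$-action. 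Once this compatibility of Galois data is pinned down, the permutation-lattice condition is preserved essentially for free, since it is a condition on the abstract $\Gamma$-module $M_{U'} \cong M_{\eta'}$.
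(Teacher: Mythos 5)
Your proof is correct and follows essentially the same route as the paper's: after shrinking, view $G$ as an isotrivial torus split by a connected Galois cover, identify the character lattice over $U$ with that of the generic fiber compatibly with the $\Gamma$-action (noting the Galois group of the cover agrees with that of the induced function field extension), and conclude that the permutation-lattice property transfers. The only cosmetic difference is that you justify the identification of character lattices via constancy of the trivialized character sheaf on the connected cover, whereas the paper obtains it by applying the spreading-out of homomorphisms (Lemma \ref{lem:group-homomorphisms-spreading-out}) to the restriction map on character modules.
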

\begin{proof} We may assume that $G$ is also isotrivial. We recall that the character lattice of a torus is expressed in terms of the (\'etale locally constant) sheaf
$\chi(G)= \uHom_{U-\text{gp}} (G, \mb G_{m, U}).$
Restriction from $U$ to $\eta$ induces a homomorphism of finitely generated $\mb Z$-modules 
$$\uHom_{U-\text{gp}} (G, \mb G_{m, U}) \to \uHom_{K-\text{gp}} (G_\eta, \mb G_{m, \eta})$$
and by Lemma \ref{lem:group-homomorphisms-spreading-out} we may assume that this is an isomorphism of $\mb Z$-modules. 
We also note that if $\tilde U \to U$ is a finite \'etale covering that trivializes $G_U$ and restricts to the finite separable extension $L / \kappa (\eta)$, then $\Gamma= \Gal (L /K)$ is at the same time the fundamental group of this covering and the associated action of $\Gamma$ on $\chi(G_\eta)$ induces same action of this group on $\chi(G)$. 
\end{proof}
 
\begin{lem}\label{lem:unipotency-spreading-out} Let $G$ be unipotent group scheme over an integral base scheme $X$. Then there exists a non-empty Zariski open set $U \subset X$ such that $G_U$ has a filtration in subgroups $1 \subset G_1 \subset \ldots \subset G_{r-1} \subset G_r = G$ with all factors $G_{\ell} / G_{\ell -1}$ isomorphic to the constant $U$-group scheme $\mb G_{a, U}$. 
\end{lem}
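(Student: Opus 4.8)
The plan is to reduce to the generic fiber, where $G_\eta$ is a unipotent group over the field $K=\kappa(\eta)$, and then spread out a suitable filtration. Recall that a smooth connected unipotent group over a field of characteristic zero is automatically split, i.e.\ admits a filtration with successive quotients isomorphic to $\mb G_a$; over a perfect field this is a standard structural fact (see e.g.\ \cite[Exp.~XVII]{SGA3}), and since our base category is that of varieties over an algebraically closed field of characteristic zero, $K$ is perfect and in fact $G_\eta$ is smooth and connected once we shrink $X$. So first I would invoke Proposition \ref{spread-out-maximal-torus} and Corollary \ref{cor:closed-conn-comp-unity} to arrange, after shrinking $X$, that $G$ is smooth with connected fibers, whence $G_\eta$ is connected unipotent, hence split: there is a filtration $1 = H_0 \subset H_1 \subset \cdots \subset H_r = G_\eta$ of closed subgroups with $H_\ell/H_{\ell-1} \cong \mb G_{a,K}$.

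Next I would spread this filtration out one step at a time. By Lemma \ref{lem:space-spreading-out}, each $H_\ell$ spreads out to a closed subgroup scheme $G_\ell$ of $G|_U$ over some dense open $U$; taking the intersection of the finitely many opens obtained, we may assume all of $G_0 \subset G_1 \subset \cdots \subset G_r = G|_U$ are defined over a single dense open $U$, with the inclusions being closed immersions of $U$-group schemes (the inclusion $G_{\ell-1}\hra G_\ell$ is a closed immersion because it is so on the generic fiber and, after further shrinking, closedness of a finitely presented monomorphism is an open condition on the base). The quotients $G_\ell/G_{\ell-1}$ then exist as smooth affine $U$-group schemes, at least after another shrink, and their generic fibers are $\mb G_{a,K}$. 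It remains to identify $G_\ell/G_{\ell-1}$ with $\mb G_{a,U}$ after shrinking: the generic isomorphism $(G_\ell/G_{\ell-1})_\eta \cong \mb G_{a,K}$ spreads out to an isomorphism of $U$-group schemes by Lemma \ref{lem:group-homomorphisms-spreading-out}. Replacing $U$ by the intersection of all the opens produced in this finite process yields the desired filtration.

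The main obstacle I anticipate is the bookkeeping around quotients: Lemmas \ref{lem:space-spreading-out} and \ref{lem:group-homomorphisms-spreading-out} are stated for subgroups and homomorphisms of a fixed ambient group scheme, but here the quotient $G_\ell/G_{\ell-1}$ is a new group scheme whose existence as a finitely presented affine $U$-scheme must itself be justified and shown compatible with base change to $\eta$. The cleanest route is to use that for $\ell \geq 1$ the quotient $H_\ell/H_{\ell-1}$ is affine over $K$ (unipotent groups are affine), pick an explicit presentation realizing $H_\ell$ as an extension of $\mb G_a$ by $H_{\ell-1}$, and spread out that extension datum directly by the same inverse-limit-of-localizations argument used in Lemma \ref{lem:space-spreading-out}; fppf descent of affineness then guarantees $G_\ell/G_{\ell-1}$ is representable by an affine $U$-scheme with the expected generic fiber. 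Alternatively, one may avoid quotients entirely by spreading out, for each $\ell$, a surjection $G_\ell \to \mb G_{a,K}$ with kernel $H_{\ell-1}$ on the generic fiber, which is the formulation most directly handled by Lemma \ref{lem:group-homomorphisms-spreading-out} together with Lemma \ref{lem:space-spreading-out}.
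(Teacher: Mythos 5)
Your proposal is correct and takes essentially the same route as the paper: reduce to the generic fiber, use that a unipotent group over a characteristic-zero field is split, and spread out the $\mb G_a$ subgroups and the identifying isomorphisms via Lemmas \ref{lem:space-spreading-out} and \ref{lem:group-homomorphisms-spreading-out}. The only difference is bookkeeping: the paper spreads out a single $\mb G_a$ subgroup and then inducts on the quotient $G_U/\mb G_{a,U}$ (with a footnote invoking Zariski-local triviality of $\mb A^1$-fibrations to handle the quotient), whereas you spread out the whole generic filtration at once, and the quotient-representability issue you rightly flag is exactly what that inductive reformulation, or your alternative of spreading out surjections onto $\mb G_a$, is meant to absorb.
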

\begin{proof} Let $H_\eta$ be a subgroup of the generic fiber $G_\eta$. By Lemma \ref{lem:space-spreading-out} the property of being a subgroup spreads out to a non-empty open in $X$. We also observe that the property of being isomorphic to $\mb G_a$ spreads out. That is, if $H_\eta$ is isomorphic to $\mb G_{a, \eta}$, then there exists a non-empty open $U \subseteq X$ such that $H_\eta$ spreads out over it to the constant group scheme $\mb G_{a, U}$. This is another straightforward spreading out argument: let $K$ be the field of fraction of an integral domain $R$. Let $S$ be a finitely presented $R$-algebra generated by $x_1, \ldots, x_\ell$ and that there exists an $R$-algebra isomorphism $\phi: K \tensor_R S \to K[t]$. It is easy to check that there exists a localization $R_f$ of $R$ that extends $\phi$ to an isomorphism $\tilde \phi: R_f \tensor_R S \to R_f [t]$. 
The claim now follows by induction on the quotient scheme $G_U / \mb G_{a, U}$.\footnote{A relevant note is that $\mb A_1$-fibrations are always Zariski locally trivial (cf. \cite{Kambayashi}).}
\end{proof}

The proof of Proposition \ref{cor:commutative-structure-spread-out} is now immediate by spreading the maximal torus of the generic fibre out by Proposition \ref{spread-out-maximal-torus}, and spreading the unipotent radical out by Lemma \ref{lem:unipotency-spreading-out}, and observing that the group structure of $T_\eta \times U_\eta$ also spreads out by Lemma \ref{lem:group-homomorphisms-spreading-out}.

\section{Inertia of groupoids}\label{ch:Gpd}

\newcommand{\gpd}{\mathrm{gpd}}

Let $K(\gpd)$ be the $\mb Q$-vector space generated by finite groupoids,
modulo equivalence and scissor relations.
It is easy to verify that the vector space $K(\gpd)$ is generated by $[BG]$, for finite groups~$G$.

Denote by $\In:K(\gpd)\to K(\gpd)$ the endomorphism sending $[X]$ to
$[\In X]$, where $\In X= X \times_{X \times X} X$ is the inertia groupoid of $X$.  Note that inertia
is compatible with equivalence and scissor relations, so that $\In$ is
well-defined. 

\subsection{Filtration by central order} 

The vector space $K(\gpd)$ has two natural gradings: by order of the
automorphism group, and by order of the centre of the automorphism group. Let $K^{n}(\gpd)$ be the subspace of $K(\gpd)$ generated by those $[BG]$ such that $\# G = n$. Let $K_{i} (\gpd)$ be the subspace generated by $[BG]$ such that $\#Z(G) = i$. Finally $K^n_i(\gpd)$ is generated by those $[BG]$ such that $\#G=n$, and $\#Z(G)=i$.  We have
$$K(\gpd)=\bigoplus_{n=1}^\infty\bigoplus_{i=1}^\infty K^n_i(\gpd)\,.$$
Clearly, $K^{n}(\gpd)$ is finite-dimensional for every $n$, but $K_{i}(\gpd)$ is infinite-dimensional, for all $i$. This grading defines an ascending filtration $K^{\leq n} (\gpd)$ and a descending filtration $K_{\geq i} (\gpd)$. 

\begin{lem}
The endomorphism $\In$ preserves $K^{\leq
  n}(\gpd)$ and $K_{\geq i}(\gpd)$ and on the associated graded,
$K_{\geq i}/K_{> i}(\gpd)$, $\In$ is multiplication by $i$. 
\end{lem}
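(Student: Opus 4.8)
The plan is to compute the inertia of a single generator $[BG]$ explicitly and then read off the two statements about the filtrations. First I would recall that for a finite group $G$ one has the standard groupoid decomposition
$$\In(BG) \simeq \coprod_{[g]} B\, Z_G(g),$$
where the coproduct runs over conjugacy classes $[g]$ of elements $g \in G$ and $Z_G(g)$ is the centralizer of $g$; this is just the observation that an object of $\In(BG)$ is a pair (point, automorphism), automorphisms of the unique object of $BG$ are elements of $G$, and isomorphisms act by conjugation, so the automorphism group of $(\ast, g)$ in $\In(BG)$ is $Z_G(g)$. Passing to $K(\gpd)$ and using the scissor relations, this gives $\In[BG] = \sum_{[g]} [B Z_G(g)]$.

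Next I would analyze where each summand $[B Z_G(g)]$ lies in the two gradings. For the order grading: $\#Z_G(g)$ divides $\#G = n$, with equality exactly when $g$ is central. Hence $\In[BG] \in K^{\leq n}(\gpd)$, which gives preservation of the ascending filtration. For the central-order grading: one has $Z(G) \subseteq Z_G(g)$ always, and moreover $Z(G) \subseteq Z(Z_G(g))$, so $\#Z(Z_G(g)) \geq \#Z(G) = i$; thus each summand lies in $K_{\geq i}(\gpd)$, giving preservation of the descending filtration. It remains to identify the action on $\mathrm{gr}_i = K_{\geq i}/K_{>i}$. The summands with $\#Z(Z_G(g)) > i$ vanish in the associated graded, so modulo $K_{>i}(\gpd)$ only the terms $[g]$ with $\#Z(Z_G(g)) = i$ survive; I claim these are exactly the central elements $g \in Z(G)$. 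Indeed if $g \in Z(G)$ then $Z_G(g) = G$ and $Z(Z_G(g)) = Z(G)$ has order $i$; conversely if $\#Z(Z_G(g)) = i = \#Z(G)$ and $Z(G) \subseteq Z(Z_G(g))$, then $Z(Z_G(g)) = Z(G)$, and since $g \in Z(Z_G(g))$ we get $g \in Z(G)$. Each central $g$ contributes its own (singleton) conjugacy class, and the corresponding summand is $[B Z_G(g)] = [BG]$ itself; there are exactly $\#Z(G) = i$ such classes. Therefore $\In[BG] \equiv i\,[BG] \pmod{K_{>i}(\gpd)}$, i.e. $\In$ is multiplication by $i$ on $\mathrm{gr}_i$.

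The only real point requiring care — the ``main obstacle,'' such as it is — is the bookkeeping in the associated graded: one must be sure that every noncentral conjugacy class $[g]$ either lands in strictly higher central-order filtration or, if it happens to have $\#Z(Z_G(g)) = i$ as well, is nonetheless accounted for correctly. The argument above shows $\#Z(Z_G(g)) = i$ forces $g$ central, so there is no such stray noncentral class, and the count is clean. One should also note that the claimed statement is about the generators $[BG]$ and extends to all of $K(\gpd)$ by $\mb Q$-linearity, since the gradings and $\In$ are all $\mb Q$-linear and the $[BG]$ span. Assembling these observations yields the lemma.
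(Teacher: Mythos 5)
Your proof is correct and follows essentially the same route as the paper: the decomposition $\In BG\cong\bigsqcup_{[g]}BZ_G(g)$ over conjugacy classes, the observation that the $\#Z(G)=i$ central classes each contribute $[BG]$, and the check that every noncentral class lands in $K^{\leq n}(\gpd)\cap K_{>i}(\gpd)$. If anything, your bookkeeping is a bit more explicit than the paper's, which records the inequalities $\#Z(G)<\#Z_G(g)<\#G$ and leaves implicit the step you spell out via $Z(G)\cup\{g\}\subseteq Z(Z_G(g))$, namely that noncentral classes have strictly larger central order.
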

\begin{proof}
For any finite group $G$, 
$$\In BG\cong\bigsqcup_{g\in C(G)}B Z_G(g)\,,$$
where $C(G)$ denotes the set of conjugacy classes of $G$, and $Z_G(g)$
is the centralizer of $g$ in $G$. Thus,
\begin{align*}
\In [BG]&=\sum_{g\in C(G)}[BZ_G(g)]\\
&=\#Z(G)[BG]+\sum_{g\in C(G)^\ast}[BZ_G(g)]\,,
\end{align*}
where $C(G)^\ast$ denotes the set of non-central conjugacy
classes. Now we note that for non-central $g$ we have strict inequalities
$$\#Z(G)<\#Z_G(g)<\#G\,.$$
This is enought to prove the claim.
\end{proof}

\subsection{Local finiteness and diagonalization} 

\begin{prop}
The endomorphism $\In:K(\gpd)\to K(\gpd)$ is diagonalizable, with spectrum
of eigenvalues equal to the positive integers. 
\end{prop}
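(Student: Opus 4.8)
The plan is to leverage the filtration by central order established in the previous lemma. That lemma tells us $\In$ is block-upper-triangular with respect to the descending filtration $K_{\geq i}(\gpd)$, acting as multiplication by $i$ on the graded piece $K_{\geq i}/K_{>i}(\gpd)$. So the eigenvalues are forced to be among the positive integers, and the claim to prove is really (a) every positive integer actually occurs, and (b) the operator is genuinely diagonalizable, not merely triangularizable with integer diagonal.

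For (a), the easy direction: for any finite abelian group $A$ of order $i$, we have $\In BA \cong \bigsqcup_{a\in A} BA$, so $\In[BA] = i[BA]$; thus every $i \in \mb N$ is an eigenvalue, realized by $[B(\mb Z/i)]$. For (b), I would work one ``weight'' $n$ at a time, using that $K^{\leq n}(\gpd)$ is finite-dimensional and $\In$-stable, so it suffices to diagonalize $\In$ on each $K^{\leq n}(\gpd)$ compatibly. Within $K^{\leq n}$ the filtration by central order is a finite filtration, and the strict inequality $\#Z(G) < \#Z_G(g) < \#G$ for non-central $g$ from the previous proof shows the ``off-diagonal'' terms $[BZ_G(g)]$ all live in strictly higher central-order, strictly lower total-order pieces. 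The standard linear-algebra fact is that a finite-dimensional operator preserving a finite filtration and acting semisimply (here, by a scalar $i$) on each graded piece is diagonalizable provided the scalars on distinct graded pieces are distinct — which is exactly the case here since the graded pieces are indexed by the central order $i$ and the scalar there is $i$ itself. Concretely, I would produce eigenvectors by downward induction on $i$: start from the top graded piece (largest $i$ appearing in $K^{\leq n}$), where elements are genuine eigenvectors with eigenvalue $i$; then for a class $v \in K_{\geq i}$ whose image in the graded piece is nonzero, $(\In - i)v$ lies in $K_{>i} \cap K^{\leq n}$, which by induction is spanned by eigenvectors with eigenvalues $> i$ (in fact integers strictly between $i$ and $n$), so we can correct $v$ by a suitable linear combination of those eigenvectors to land in $\ker(\In - i)$; this uses invertibility of $\In - i$ on the span of eigenvectors with eigenvalue $\neq i$.

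The main obstacle — such as it is — is bookkeeping rather than mathematics: one must check that the corrected eigenvectors obtained for each $n$ are compatible as $n$ grows (so that they assemble into a basis of all of $K(\gpd)$), and that the span of all eigenvectors constructed this way is all of $K(\gpd)$, not a proper subspace. Both follow from the fact that $K(\gpd) = \bigcup_n K^{\leq n}(\gpd)$ with each inclusion $\In$-equivariant, and from the observation that the eigenvector-correction procedure on $K^{\leq n}$ restricts to the one on $K^{\leq n-1}$ because $\In$ preserves total order. So the argument is: spectrum $\subseteq \mb N$ by the previous lemma, $\mb N \subseteq$ spectrum via $[B(\mb Z/i)]$, and diagonalizability by the filtration-plus-distinct-scalars argument carried out uniformly on the finite-dimensional pieces $K^{\leq n}(\gpd)$.
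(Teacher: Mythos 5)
Your proposal is correct and follows essentially the same route as the paper: restrict to the finite-dimensional $\In$-invariant subspaces $K^{\leq n}(\gpd)$, where the filtration by central order makes $\In$ triangular with scalar $i$ on each graded piece, and distinct scalars on distinct pieces force diagonalizability. You simply spell out the standard filtration-plus-distinct-scalars argument (and the realization of each eigenvalue by $[B(\mb Z/i)]$) in more detail than the paper does.
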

\begin{proof}
Every subspace $K^{\leq n}(\gpd)$ is finite dimensional, and preserved by
$\In$. On this finite dimensional subspace, $\In$ is triangular, with
respect to the grading with central order, and with distinct eigenvalues
on the diagonal. This proves that $\In$ is diagonalizable when
restricted to $K^{\leq n}(\gpd)$ for all $n$. 
\end{proof}

Thus $K(\gpd)$ has another natural grading, namely the grading
induced by the direct sum decomposition into eigenspaces under $\In$. This can be interpreted as a grading by the  {\em virtual order of the centre}. Denote the corresponding projection operators by~$\pi_n$. 

For instance, if $A$ is a finite abelian group, then $[BA]$ is an eigenvector for
$\In$, with eigenvalue $\#A$. Thus $\pi_n[BA]=[BA]$, if $A$ had $n$
elements, and $\pi_n[BA]=0$, otherwise.

\begin{ex}
For the dihedral group $D_4$ with eight elements, we have
$$I[BD_4]=2[BD_4]+[B\mb Z_4]+2[BD_2]\,.$$
Hence
$$[BD_4]-\sfrac{1}{2}[B\mb Z_4]-[BD_2]$$
is an eigenvalue of $\In$ with eigenvalue $2$. It follows that
$$\pi_n[BD_4]=\begin{cases}
[BD_4]-\sfrac{1}{2}[B\mb Z_4]-[BD_2]&\text{if $n=2$}\\
\sfrac{1}{2}[B\mb Z_4]+[BD_2]&\text{if $n=4$}\\
0&\text{otherwise}\,.\end{cases}$$
\end{ex}

\subsection{The operators $\In_r$ and eigenprojections}
\label{sec:projection-formula-for-groupoids}

Let $\In_r BG$ be the stack of tuples $(s_1, \ldots, s_r)$ where $s_i$ are $r$ distinct pairwise commuting elements of $G$:
$$\In_r (BG)= [(G^{\times r})^*/ G]\,,$$ 
where the brackets are used as the notation for quotient algebroids. In $K(\gpd)$ we write 
$$\In_r [BG]= [(G^{\times r})^*/ G]\,,$$
where bracket stands for the element in the $K$-group and the
quotient notation is omitted. 

This defines another family of operators on $K(\gpd)$. For $r=0$, $\In_0$ is identity on all $BG$ and $\In_1$ is the usual inertia operator.

\begin{prop}
The operators $\In_r$ for $r\geq0$, preserve the
filtration $K_{\geq k}(\gpd)$ and act as multiplication by 
$r! {k \choose r}$ on the quotient
$K_{\geq k}(\gpd)/K_{> k}(\gpd)$.
\end{prop}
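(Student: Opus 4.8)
The plan is to compute $\In_r BG$ explicitly in terms of centralizers, in the same spirit as the computation of $\In BG$ in the preceding lemma. Given a finite group $G$, the stack $\In_r BG = [(G^{\times r})^\ast / G]$ decomposes, by choosing an orbit representative for the conjugation action on ordered $r$-tuples of pairwise-commuting elements, as
$$\In_r BG \cong \bigsqcup_{[(g_1,\dots,g_r)]} B Z_G(g_1,\dots,g_r),$$
where $[(g_1,\dots,g_r)]$ ranges over $G$-conjugacy classes of such tuples and $Z_G(g_1,\dots,g_r) = \bigcap_i Z_G(g_i)$ is the simultaneous centralizer. Hence in $K(\gpd)$,
$$\In_r[BG] = \sum_{[(g_1,\dots,g_r)]} [B Z_G(g_1,\dots,g_r)].$$

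Next I would split the sum according to whether all of $g_1,\dots,g_r$ are central. If every $g_i \in Z(G)$, then the tuple is a fixed point of the conjugation action, so each such tuple is its own conjugacy class, there are exactly $\#Z(G)(\#Z(G)-1)\cdots(\#Z(G)-r+1) = r!\binom{\#Z(G)}{r}$ of them (ordered tuples of distinct central elements), and each contributes $[BZ_G(g_1,\dots,g_r)] = [BG]$ since a central element centralizes everything. So
$$\In_r[BG] = r!\binom{\#Z(G)}{r}\,[BG] + \sum_{[(g_1,\dots,g_r)]\ \text{not all central}} [B Z_G(g_1,\dots,g_r)].$$
It then remains to observe that for a tuple in which some $g_j$ is non-central, the centralizer $Z_G(g_1,\dots,g_r) \subseteq Z_G(g_j)$ is a proper subgroup of $G$ containing $Z(G)$ (it contains $Z(G)$ because central elements commute with all the $g_i$), and crucially $\#Z\big(Z_G(g_1,\dots,g_r)\big) > \#Z(G)$, or at any rate $\#Z(Z_G(g_1,\dots,g_r)) \ge \#Z(G)$ with the class lying in $K_{> k}$ when $\#Z(G)=k$; the point is that the centre of $Z_G(g_1,\dots,g_r)$ contains $Z(G)$ together with each $g_i$, and since not all $g_i$ are in $Z(G)$ this centre strictly contains $Z(G)$. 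Therefore every term in the remaining sum lies in $K_{>k}(\gpd)$ when $[BG] \in K_k(\gpd)$, which simultaneously shows $\In_r$ preserves $K_{\ge k}(\gpd)$ and acts as multiplication by $r!\binom{k}{r}$ on the graded piece.

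The main obstacle, and the only point needing genuine care, is the claim that each non-central tuple contributes a class strictly in higher central-order filtration, i.e. that $g_1,\dots,g_r$ all lie in the centre of their common centralizer $H := Z_G(g_1,\dots,g_r)$ and that this forces $\#Z(H) > \#Z(G)$: indeed each $g_i$ commutes with every element of $H$ by definition of $H$, so $\langle g_1,\dots,g_r\rangle \cdot Z(G) \subseteq Z(H)$, and if some $g_j \notin Z(G)$ then this inclusion is strict, giving $\#Z(H) \ge \#Z(G) + 1 > k$. One should double-check the degenerate cases $r=0$ (empty product, $\binom{k}{0}=1$, consistent with $\In_0 = \id$) and $r=1$ (recovering the previous lemma, $1!\binom{k}{1}=k$), and confirm the bookkeeping that $(G^{\times r})^\ast$ denotes pairwise-commuting $r$-tuples of \emph{distinct} elements so that the count of all-central tuples is the falling factorial $r!\binom{\#Z(G)}{r}$ rather than $(\#Z(G))^r$. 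With these points verified the proof of the proposition is complete.
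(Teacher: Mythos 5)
Your proof is correct and takes essentially the same route as the paper: decompose $\In_r[BG]$ into the contribution of the $r!\binom{k}{r}$ ordered tuples of distinct central elements (each a conjugation fixed point contributing $[BG]$) plus the non-central tuples, whose simultaneous centralizers give classes of strictly larger central order. The only difference is that you make explicit the step the paper leaves implicit, namely that $Z(G)$ together with the elements $g_1,\dots,g_r$ lies in the centre of $Z_G(g_1,\dots,g_r)$, so a non-central tuple forces $\#Z(Z_G(g_1,\dots,g_r))>k$ and those terms land in $K_{>k}$.
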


\begin{proof}  Let $n$ be the size of the group $G$ and $k$ the size of its centre. Notice that 
there are $r! {k \choose r}$ ways of choosing a labelled set of $r$ elements from the centre and therefore,
$$\In_r[ BG]=r!\,{k \choose r}\,[BG]+
\sum_{\stack{S\in (G^{\times r})^\ast}{S \not\subseteq Z(G)}}[B Z_G (S)]\,.$$
\end{proof}

\begin{cor}
The operators $\In_r$, for $r\geq0$ are simultaneously
diagonalizable. The common eigenspaces form a family $\Pi_k(\gpd)$ of subspaces of $K(\gpd)$ indexed by positive integers $k\geq0$, and 
$$K(\gpd)=\bigoplus_{k\geq0}\Pi_k(\gpd)\,.$$
Let $\pi_k$ denote the projection onto $\Pi_k(\gpd)$. We have
$$\In_r\pi_k=r!\,{k \choose r}\pi_k\,,$$
for all $r, k\geq0$.
\end{cor}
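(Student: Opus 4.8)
The plan is to obtain simultaneous diagonalizability of the family $\{\In_r\}_{r\geq0}$ from the already-proved diagonalizability of $\In=\In_1$, together with two auxiliary facts: that the $\In_r$ commute pairwise, and that each $\In_r$ preserves every finite-dimensional subspace $K^{\leq n}(\gpd)$. For commutativity I would unwind the definitions. For a finite group $G$, the class $\In_r\In_s[BG]$ is the sum of terms $[B\Stab_G(x)]$ over the orbits, under simultaneous conjugation by $G$, of pairs $x=(S,T)$ where $S$ is an $s$-tuple of distinct pairwise commuting elements of $G$ and $T$ an $r$-tuple of distinct pairwise commuting elements of $Z_G(S)$, with $\Stab_G(x)=Z_G(S)\cap Z_G(T)$. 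Such pairs are exactly the $(s+r)$-tuples partitioned into an $s$-block and an $r$-block, each block with distinct entries, all $s+r$ entries pairwise commuting; swapping the two blocks is a $G$-equivariant, stabilizer-preserving bijection onto the analogous set attached to $\In_s\In_r[BG]$, so the two sums agree term by term. (A quick check with $G=S_3$ confirms this: both $\In_1\In_2$ and $\In_2\In_1$ send $[BS_3]$ to $4[B\mb Z_2]+9[B\mb Z_3]$.)

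Next I would record that each $K^{\leq n}(\gpd)$ is finite-dimensional and, because $\#Z_G(S)\leq\#G$, is preserved by every $\In_r$. So it suffices to simultaneously diagonalize the commuting family $\{\In_r|_{K^{\leq n}}\}$ for each fixed $n$ and then take the union over $n$. Fix $n$ and let $K^{\leq n}(\gpd)=\bigoplus_k E_k$ be the eigenspace decomposition of $\In_1$ on $K^{\leq n}$ provided by the proposition above, with $\In_1|_{E_k}$ equal to multiplication by $k$. Since every $\In_r$ commutes with $\In_1$, it preserves each $E_k$, and the problem reduces to showing $\In_r|_{E_k}=r!\binom{k}{r}\cdot\id$.

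To see this, recall that $\In_1$ is triangular with respect to the central-order grading, with diagonal entry $k$ on $K_k(\gpd)$; hence $\In_1-k$ is invertible on $K^{\leq n}/K_{\geq k}^{\leq n}$ and on $K_{>k}^{\leq n}$, which forces $E_k\subseteq K_{\geq k}^{\leq n}$ and $E_k\cap K_{>k}^{\leq n}=0$. By the proposition on the action of $\In_r$ on the graded pieces, $\In_r-r!\binom{k}{r}$ maps $K_{\geq k}(\gpd)$ into $K_{>k}(\gpd)$; applied to any $v\in E_k$ this gives $\In_r v-r!\binom{k}{r}\,v\in E_k\cap K_{>k}^{\leq n}=0$, so indeed $\In_r|_{E_k}=r!\binom{k}{r}\cdot\id$. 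Letting $\Pi_k(\gpd)$ be the $k$-eigenspace of $\In_1$ on all of $K(\gpd)$ (equivalently, $\bigcup_n E_k$), we obtain $K(\gpd)=\bigoplus_{k\geq0}\Pi_k(\gpd)$ with $\Pi_0=0$, these are the common eigenspaces of the whole family, and with $\pi_k$ the associated projections we get $\In_r\pi_k=r!\binom{k}{r}\pi_k$ for all $r,k\geq0$.

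I expect the commutativity of the $\In_r$ to be the only step that is not routine linear algebra riding on the two propositions already established. The bookkeeping with the groupoid quotients and with the distinctness requirement on the entries of the tuples needs care, but the block-swap involution makes the identification transparent; everything after that follows formally.
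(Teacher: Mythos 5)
Your proof is correct, and it supplies what the paper leaves implicit: the corollary is stated with no proof at all, as if it followed immediately from the preceding proposition on the filtration $K_{\geq k}(\gpd)$ by the same finite-dimensional triangularity argument used to diagonalize $\In$. The genuinely new ingredient you add is the pairwise commutativity of the $\In_r$, proved by the block-swap bijection on tuples, and it is doing real work rather than bookkeeping: the proposition alone only makes the family simultaneously triangular on each $K^{\leq n}(\gpd)$, and for $r\geq 2$ the diagonal values $r!\binom{k}{r}$ are not distinct (the value $0$ repeats for all $k<r$), so one cannot even conclude that a single $\In_r$ is diagonalizable, let alone that the family has a common eigenbasis, without some extra input. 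Your chain --- $\In_r$ preserves each $\In_1$-eigenspace $E_k$ by commutativity, $E_k\subseteq K_{\geq k}$ and $E_k\cap K_{>k}=0$ by the triangularity of $\In_1$ with distinct diagonal entries, hence $\In_r-r!\binom{k}{r}$ kills $E_k$ --- closes exactly this gap, and your $S_3$ sanity check is consistent with the paper's example. An even shorter route, in the spirit of the identity $\In_r=\sum_{k=1}^r s(r,k)\,I^{(k)}$ that the paper only records later in the Deligne--Mumford section, is to observe that on $K(\gpd)$ one has $I^{(k)}=\In^k$, so $\In_r=\In(\In-1)\cdots(\In-r+1)$ is a polynomial in $\In$; commutativity is then automatic and the scalar action $r!\binom{k}{r}$ on $\Pi_k$ follows at once from the diagonalizability of $\In$ already proved.
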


\begin{cor}
For $r\geq0$, we have
$$\ker \In_r=\bigoplus_{k<r}\Pi_k(\gpd)\,.$$
\end{cor}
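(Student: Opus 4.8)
The plan is to read the kernel directly off the simultaneous eigenspace decomposition established in the preceding corollary, so that the statement becomes a pure bookkeeping consequence. Recall that $K(\gpd)=\bigoplus_{k\geq0}\Pi_k(\gpd)$ and that $\In_r$ acts on $\Pi_k(\gpd)$ as multiplication by the scalar $r!\,{k\choose r}$.

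First I would fix $r\geq0$ and write an arbitrary element $v\in K(\gpd)$ in its unique form $v=\sum_{k\geq0}v_k$ with $v_k=\pi_k(v)\in\Pi_k(\gpd)$ (a finite sum). Then $\In_r v=\sum_{k\geq0}r!\,{k\choose r}\,v_k$, and since the subspaces $\Pi_k(\gpd)$ are in direct sum, this vanishes if and only if $r!\,{k\choose r}\,v_k=0$ for every $k$. Next I would observe that the scalar $r!\,{k\choose r}$ vanishes precisely when $k<r$: indeed $r!\neq0$, while ${k\choose r}=0$ for $0\leq k<r$ and ${k\choose r}\geq1$ for $k\geq r$. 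Hence $r!\,{k\choose r}\,v_k=0$ forces $v_k=0$ for all $k\geq r$, and conversely any $v$ supported on indices $k<r$ is annihilated by $\In_r$. This is exactly the assertion $\ker\In_r=\bigoplus_{k<r}\Pi_k(\gpd)$.

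There is no real obstacle here; all the content sits in the previous corollary. The only point needing a moment's care is the edge behaviour at $r=0$, where the claim reads $0!\,{k\choose 0}=1\neq0$ for all $k$ and the right-hand side $\bigoplus_{k<0}\Pi_k(\gpd)$ is the zero subspace, matching $\ker\In_0=\ker\id=0$; this is handled by the usual conventions $0!=1$ and ${0\choose 0}=1$.
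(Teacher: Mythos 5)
Your argument is correct and is exactly the bookkeeping the paper intends: the corollary is stated without proof as an immediate consequence of the simultaneous diagonalization $K(\gpd)=\bigoplus_k\Pi_k(\gpd)$ with $\In_r$ acting on $\Pi_k(\gpd)$ by $r!\,\binom{k}{r}$, which vanishes precisely for $k<r$. Your treatment of the $r=0$ edge case is a fine extra touch but adds nothing essential.
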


\begin{cor}\label{cor:beautiful-invesion}
For every $k\geq0$, we have
$$\pi_k=\sum_{r=k}^\infty \frac{(-1)^{r+k}}{r!} { r \choose k} \,\In_r\,.$$
In
particular, $\pi_0 = \sum_{r=0}^\infty \frac{(-1)^r}{r!} \In_r$, and $\pi_1= \sum_{r=1}^\infty \frac{(-1)^{r-1}}{(r-1)!}\,\In_r\,.$
\end{cor}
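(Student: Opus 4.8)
The plan is to invert the linear relations $\In_r\pi_k = r!\binom{k}{r}\pi_k$ from the previous corollary, expressing each $\pi_k$ as an explicit infinite linear combination of the $\In_r$. Since on each finite-dimensional subspace $K^{\leq n}(\gpd)$ only finitely many $\pi_k$ are nonzero (those with $k \leq n$) and $\In_r$ annihilates $K^{\leq n}(\gpd)$ for $r$ large, all the infinite sums below are actually finite when evaluated on any fixed element of $K(\gpd)$, so there is no convergence issue to worry about; this finiteness is the one point I would make explicit at the start of the proof.

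First I would fix $k \geq 0$ and compute, for the candidate operator $P_k := \sum_{r=k}^\infty \frac{(-1)^{r+k}}{r!}\binom{r}{k}\In_r$, its action on an arbitrary eigenspace $\Pi_\ell(\gpd)$. Using $\In_r\pi_\ell = r!\binom{\ell}{r}\pi_\ell$, one gets $P_k\pi_\ell = \Big(\sum_{r=k}^\infty (-1)^{r+k}\binom{r}{k}\binom{\ell}{r}\Big)\pi_\ell$, where the sum over $r$ is finite (the terms vanish for $r > \ell$). So the whole statement reduces to the combinatorial identity
$$\sum_{r=k}^{\ell} (-1)^{r+k}\binom{r}{k}\binom{\ell}{r} = \delta_{k\ell}.$$
This is the standard orthogonality relation for binomial coefficients: rewrite $\binom{r}{k}\binom{\ell}{r} = \binom{\ell}{k}\binom{\ell-k}{r-k}$ by the subset-of-a-subset identity, pull $\binom{\ell}{k}$ out of the sum, substitute $j = r-k$, and recognize $\sum_{j=0}^{\ell-k}(-1)^j\binom{\ell-k}{j} = (1-1)^{\ell-k} = \delta_{k\ell}$. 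Hence $P_k\pi_\ell = \delta_{k\ell}\pi_\ell$, and summing over $\ell$ (using $\sum_\ell \pi_\ell = \id$ from the decomposition $K(\gpd) = \bigoplus \Pi_k(\gpd)$) gives $P_k = \pi_k$, which is exactly the claimed formula.

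Finally I would record the two special cases: for $k=0$, $\binom{r}{0}=1$ gives $\pi_0 = \sum_{r=0}^\infty \frac{(-1)^r}{r!}\In_r$; for $k=1$, $\binom{r}{1}=r$ gives $\pi_1 = \sum_{r=1}^\infty \frac{(-1)^{r+1}}{r!}\,r\,\In_r = \sum_{r=1}^\infty \frac{(-1)^{r-1}}{(r-1)!}\In_r$, as stated. I do not anticipate a genuine obstacle here; the only thing requiring a little care is the bookkeeping that makes the infinite sums meaningful, namely invoking local finiteness of $\In$ (equivalently, that $\In_r$ vanishes on $K^{\leq n}(\gpd)$ once $r$ exceeds $n$, since a group of order $\leq n$ has at most $n$ pairwise commuting elements) so that $P_k$ is a well-defined operator agreeing with $\pi_k$ termwise on each graded piece.
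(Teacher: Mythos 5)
Your proposal is correct and follows essentially the same route as the paper: expand $\In_r$ in terms of the eigenprojections via $\In_r\pi_\ell = r!\binom{\ell}{r}\pi_\ell$ and invert using the orthogonality identity $\sum_r(-1)^{r+k}\binom{r}{k}\binom{\ell}{r}=\delta_{k\ell}$, which the paper simply cites (as the ``beautiful identity'') while you prove it directly. The only additions on your side are that short proof of the identity and the explicit remark on local finiteness, both of which are fine.
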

\begin{proof} We use the ``beautiful identity'' \cite{MobiusInversion}
$$\sum_{r} (-1)^{r+k} \binom{\ell}{r} \binom{r}{k}  = \delta_{\ell k}$$
to find the projections. We have 
$\id=\sum_{\ell\geq0}\pi_\ell\,,$
and hence
$$\In_r=\sum_{\ell\geq0}\In_r\pi_\ell=\sum_{\ell\geq0}r!\,{\ell \choose r}\,\pi_\ell\,,$$
and therefore
\begin{multline*}
\sum_{r\geq0}\frac{(-1)^{r+k}}{r!} {r\choose k}\,\In_r
=\sum_{r\geq0}\frac{(-1)^{r+k}}{r!} {r \choose k}\left(\sum_{\ell\geq0}r!\,{\ell \choose r}\,\pi_\ell\right)\\
=\sum_{\ell\geq0}\left(\sum_{r\geq0}(-1)^{r+k} {r \choose k} {\ell \choose r}\right)\,\pi_\ell
=\sum_{\ell\geq0}\delta_{\ell,k}\,\pi_\ell=\pi_k\,.
\end{multline*}
\end{proof}

\begin{ex} For the group of permutations of 3 letters, $S_3$ we have
$$\In_r[BS_3]=\begin{cases}
[BS_3] + [B \mb Z_2] + [B \mb Z_3]&\text{if $r=1$} \\
2[B\mb Z_2] + 3 [B \mb Z_3]&\text{if $r=2$} \\
3[B \mb Z_3]&\text{if $r=3$} \\
0&\text{otherwise}\,.
\end{cases}$$
This gives us a way of computing 
$$\pi_n[BS_3]=\begin{cases}
\In_1 - I_2 + \sfrac{1}2 I_3 = [BS_3]-[B\mb Z_2]-\sfrac{1}2 [B\mb Z_3]&\text{if $n=1$}\\
\sfrac{1}2 I_2 - \sfrac{1}2 I_3 = [B\mb Z_2]&\text{if $n=2$}\\
\sfrac{1}6 I_3= \sfrac{1}2 [B\mb Z_3]&\text{if $n=3$}\\
0&\text{otherwise}\,.\end{cases}$$
\end{ex}

\section{Inertia operator on Deligne-Mumford stacks}
\label{ch:DM}

Let $\DM$ be the full subcategory of $\St/B$ of all Deligne-Mumford stacks over $B$. The inertia of a Deligne-Mumford stack is another Deligne-Mumford stack. The Grothendieck ring, $K(\DM)$, has the structure of a $K(\Sch)$-algebra. The inertia endomorphism, respects the scissor relations and is linear with respect to multiplication by schemes. Therefore we have an induced inertia endomorphism on $K(\DM)$. 

\subsection{Irreducible gerbes}

An \emph{irreducible gerbe} is a connected Deligne-Mumford stack, $\XX$, with finite \'etale inertia, $\In \XX \to \XX$.

\begin{prop}\label{prop:strat-DM}Every noetherian Deligne-Mumford stack can be stratified into finitely many locally closed irreducible gerbes.
\end{prop}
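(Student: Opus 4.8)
The plan is to proceed by Noetherian induction on the Deligne–Mumford stack $\XX$: it suffices to produce a nonempty open substack $\mc U \subseteq \XX$ that is an irreducible gerbe, since then the complementary closed substack $\XX \setminus \mc U$ has strictly smaller underlying topological space and we may apply the inductive hypothesis. So the whole task reduces to the generic statement: every nonempty Noetherian DM stack contains a dense open substack which is a connected DM stack with finite étale inertia.

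First I would reduce to the connected case by noting that a Noetherian stack has finitely many irreducible components, so we may pass to a dense open substack that is irreducible (hence connected); replacing $\XX$ by this, assume $\XX$ is integral. Next, for a Deligne–Mumford stack the inertia $\In\XX \to \XX$ is unramified (this is essentially the definition of DM, or follows from the automorphism groups being finite and étale over the residue fields after passing to a cover). The key point is then a spreading-out/generic-flatness argument: $\In\XX \to \XX$ is a finitely presented unramified morphism, so by generic flatness over the integral base $\XX$ there is a dense open $\mc U \subseteq \XX$ over which $\In\XX$ is flat, hence (being also unramified and finitely presented) étale; and étale plus finite-type with finite fibers gives, after further shrinking via Zariski's main theorem (cf. \cite[Lem. 03I1]{SP}, as already used in the paper for the analogous statement about group schemes), a dense open over which $\In\XX \to \XX$ is finite étale. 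Concretely one can check this on a smooth atlas $X \to \XX$: pulling back, $\In\XX \times_\XX X \to X$ is the relevant groupoid's stabilizer/automorphism scheme, which is a quasi-finite unramified group scheme over $X$, and Corollary \ref{cor:closed-conn-comp-unity} together with generic flatness lets us shrink $X$ (compatibly, i.e. $\Aut$-invariantly, so that the open descends to $\XX$) until it becomes finite étale.

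The one subtlety to be careful about — and the main obstacle — is descending the open locus from the atlas back down to the stack: an open subset of the atlas $X$ need not be the preimage of an open substack of $\XX$ unless it is saturated for the groupoid relation $R \rightrightarrows X$. I would handle this by taking the constructible "good" locus in $X$ where $\In\XX$ is finite étale, observing it is stable under the groupoid action (the conditions "flat", "étale", "finite" at a point are invariant under the $R$-action since $R$ acts by isomorphisms on fibers), and then noting that the image of a groupoid-invariant open (equivalently, the complement of the image of the invariant closed complement, which is closed since $X \to \XX$ is open and $R \to X$ can be arranged proper enough after shrinking, or simply because images of open substacks are open) is an open substack $\mc U \subseteq \XX$ whose preimage is that locus. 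Then $\mc U$ is connected (dense open in the integral $\XX$) with finite étale inertia, i.e. an irreducible gerbe, completing the induction.
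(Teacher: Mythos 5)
Your proposal is correct and follows essentially the same route as the paper: Noetherian induction, reduction to the integral case, generic flatness plus unramifiedness to get étale on a dense open, and Zariski's main theorem/generic finiteness to upgrade to finite étale. The only cosmetic difference is that where you descend the good locus from an atlas by hand via groupoid-invariance, the paper simply invokes that flatness is fppf-local and finiteness descends along fpqc covers of the base.
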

\begin{proof} It suffices to show that restricting $\In \XX \to \XX$ to some nonempty open substack of $\XX$ is a finite \'etale morphism. Using \cite[Prop. 5.7.6]{RG} we may assume that $\XX$ is an integral Deligne-Mumford stack. Flatness is an fppf-local property hence by generic flatness \cite[Thm. 6.9.1]{EGA} there exists an open substack of $\XX$ such that $\In \XX \to \XX$ is flat. Since $\In \XX \to \XX$ is unramified, it is \'etale as well. A quasi-finite morphism of schemes is generically finite \cite[Lem. 03I1]{SP}. Therefore by fpqc-descent of finiteness on base \cite[Lem. 02LA]{SP}, $\In \XX \to \XX$ is finite on an open substack of $\XX$. 
\end{proof}
\subsubsection{Central inertia} 

Let $\ZI \XX$ be the full subcategory of $\In \XX$, consisting of tuples $(x, \phi)$ of objects $x$ of $\XX$ and central automorphisms $\phi \in Z(\Aut (x))$. Let $U \to \XX$ be an \'etale cover of $\XX$ by a scheme $U$ such that $\XX_U$ is the neutral gerbe $B_U G$ for an constant finite $U$-group scheme $G$. Then $\ZI \XX|_U$ is isomorphic to the gerbe $B_U Z(G)$. We conclude that $\ZI \XX$ is a Deligne-Mumford stack and $\ZI \XX \to \In \XX$ is a closed immersion. In particular, if $\XX$ is an irreducible gerbe, then $\ZI \XX \to \XX$ is also a finite \'etale cover. 

\begin{rmk} If $\XX$ is an irreducible gerbe, then each connected component $\YY$ of $\In \XX$ is an irreducible gerbe. This is clear since $\YY \to \XX$ is finite \'etale and therefore so is $\In \YY \to \In \XX$ 
by definition of the inertia stack. In other words, an inertia stack of an irreducible gerbe has a canonical stratification into irreducible gerbes by its connected components. 
\end{rmk} 

\subsection{Filtration by split central order}

Recall \cite[Cor. 17.9.3]{EGA} that if $\phi: Y \to X$ is a separated \'etale morphism over a connected base scheme $X$, there is a one-to-one correspondence between the sections of $\phi$ and the number of connected components of $Y$ isomorphic to $X$. Thus for a finite \'etale covering, the number of such sections is an indication of how close $Y$ is to being a trivial 
degree $n$ covering, $\bigsqcup_n X \to X$.

\begin{defn}\label{def:DM-untwistedness} For an irreducible gerbe $\XX$ we define the \emph{split central order}, to be the number of sections of $\ZI \XX \to \XX$. 
\end{defn}
We define an ascending filtration of $K(\DM)$ by declaring $[\XX]$, for an
irreducible gerbe $\XX$, to belong to $K_{\geq n}(\DM)$ if its split central order is at least $n$. 

\begin{prop}\label{prop:DM-filtration-preserved}
The inertia endomorphism on $K(\DM)$ preserves the filtration by split central order.  Furthermore, on the associated graded piece \linebreak $K_{\geq n}(\DM)/ K_{> n}(\DM)$, the inertia endomorphism operates by multiplication by the integer $n$. 
\end{prop}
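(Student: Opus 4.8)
The plan is to mimic the groupoid case (Lemma in \S\ref{ch:Gpd}) but working stack-theoretically and étale-locally. First I would reduce to the case of a single irreducible gerbe $\XX$ with split central order exactly $n$, since the classes of such gerbes span $K_{\geq n}(\DM)/K_{> n}(\DM)$ and the inertia endomorphism is $K(\Sch)$-linear. So it suffices to compute $[\In\XX]$ modulo $K_{>n}(\DM)$ and show it equals $n[\XX]$ there.

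The key structural input is the decomposition $\In\XX \cong \ZI\XX \sqcup (\text{non-central part})$, where $\ZI\XX \to \XX$ is the finite étale cover of central inertia introduced above, and the complement is the full subcategory of $\In\XX$ on pairs $(x,\phi)$ with $\phi \notin Z(\Aut(x))$ — this is open and closed in $\In\XX$ since centrality of an automorphism is an open and closed condition on a finite étale inertia (check on an étale cover trivializing everything to $B_U G$, where the condition picks out exactly the components $BZ_G(g)$ with $g$ central versus non-central). Thus $[\In\XX] = [\ZI\XX] + [\text{non-central part}]$ in $K(\DM)$. I then need two claims: (1) $[\ZI\XX] = n[\XX] + (\text{something in } K_{>n})$, and (2) each connected component of the non-central part lies in $K_{>n}(\DM)$.

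For (1): $\ZI\XX \to \XX$ is finite étale of some degree $d = \#Z(G)$ (on an étale cover $B_UG$). The number of sections of $\ZI\XX \to \XX$ is $n$ by definition of split central order, so by the correspondence recalled from \cite[Cor. 17.9.3]{EGA}, exactly $n$ of its connected components map isomorphically to $\XX$; each such component contributes $[\XX]$. Each of those component-gerbes has split central order $\geq n$ (it is isomorphic to $\XX$, which has split central order $n$, but I should double check $\ZI$ of such a component only gains sections — in fact since the component is $\cong\XX$ its central inertia is $\ZI\XX$ again, so it has the same split central order $n$, hence sits in $K_{\geq n}$; modulo $K_{>n}$ these $n$ copies give $n[\XX]$). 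The remaining connected components $\YY$ of $\ZI\XX$ are irreducible gerbes not isomorphic to $\XX$ over it; I must show each has split central order $> n$. The point is that $\YY \to \XX$ is a nontrivial connected finite étale cover, and after pulling back $\ZI\XX$ along $\YY\to\XX$ one picks up the diagonal section plus the $n$ pulled-back sections, so $\ZI\YY \to \YY$ acquires strictly more than $n$ sections — this is the stack analogue of the strict inequality $\#Z(G) < \#Z_G(g)$ being replaced here by a counting of sections after base change. For (2): if $(x,\phi)$ has $\phi$ non-central, then on the trivializing cover $Z_G(g)$ with $g$ non-central satisfies $Z(G) \subsetneq Z_G(g)$, so the central inertia of that component is strictly larger, and more sections appear after the relevant base change, placing $[\YY]$ in $K_{>n}(\DM)$.

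The main obstacle I anticipate is making the "strict gain of sections after base change" argument precise at the level of stacks rather than groups: I need that pulling a finite étale cover $\ZI\XX\to\XX$ back along a nontrivial connected component $\YY\to\XX$ of either $\ZI\XX$ itself or of the non-central inertia genuinely produces a cover with strictly more sections than $n$. Étale-locally this is the elementary fact that $Z(G) \subseteq Z_G(g)$ with equality iff $g\in Z(G)$, together with bookkeeping of how sections of a cover behave under base change, but globalizing it (especially handling the Galois/descent issues when $\XX_U \cong B_UG$ only after an étale cover, and $G$ may only be locally constant) will require care. Once that strict inequality is in hand, the associated-graded statement follows formally, exactly as in the groupoid lemma.
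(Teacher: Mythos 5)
Your overall route is the same as the paper's: stratify $\In\XX$ into connected components, note that exactly $n$ of them lie in $\ZI\XX$ and map isomorphically to $\XX$, and reduce to showing that every other component $\YY$ has split central order strictly greater than $n$. For the components $\YY\subseteq\ZI\XX$ that are not degree-one covers you also have the correct mechanism (the $n$ pulled-back sections plus the diagonal section, which is new because its image in $\ZI\XX$ is $\YY$ itself rather than one of the trivial components) -- distinctness is asserted rather than argued, but that is a small matter.

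The genuine gap is in your case (2), the non-central components, and it is exactly the step you flag as ``requiring care.'' From the local fact about centralizers you conclude that ``more sections appear after the relevant base change''; but the split central order of $\YY$ counts sections of $\ZI\YY\to\YY$ over $\YY$ itself, and an \'etale-local enlargement of the centre produces no global sections in general (a nontrivial connected finite \'etale cover acquires sections after base change while having none globally), so as written this does not place $[\YY]$ in $K_{>n}(\DM)$. (Also, the fact you quote, ``$Z(G)\subseteq Z_G(g)$ with equality iff $g\in Z(G)$,'' is false -- equality there forces $G$ abelian; what you need is $Z(G)\subseteq Z(Z_G(g))$ with equality iff $g\in Z(G)$, which holds because $g$ is central in its own centralizer.) The paper closes this gap with two observations you should add: (i) the fibred product $\YY\times_\XX\ZI\XX$ embeds in $\ZI\YY$, since $Z(\Aut(x))\subseteq Z(Z_{\Aut(x)}(\phi))$, and because all maps involved are representable finite \'etale this embedding is a union of connected components, so the $n$ sections of $\ZI\XX\to\XX$ pull back to $n$ distinct sections of $\ZI\YY\to\YY$ (this alone gives preservation of $K_{\geq n}(\DM)$); and (ii) there is a canonical \emph{global} section $\delta\colon(x,\phi)\mapsto(x,\phi,\phi)$ of $\ZI\YY\to\YY$, because $\phi$ is always central in $Z_{\Aut(x)}(\phi)$, and when $\phi$ is non-central $\delta$ does not factor through the embedded fibred product, hence is distinct from all $n$ pulled-back sections. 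This tautological section is the global, section-level replacement for the group-theoretic inequality, and it handles your cases (1) and (2) uniformly; with it the associated-graded statement follows as you say.
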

\begin{proof}
Consider an irreducible gerbe $\XX$, with split central order
$n$ and $\In \XX = \bigsqcup \YY_\alpha$ be the stratification of $\In \XX$ by connected components (hence irreducible gerbes).
There are precisely $n$ of the $\YY_\alpha$ which are contained in $\ZI \XX$ and map isomorphically to $\XX$ (i.e. are degree one connected \'etale covers of $\XX$).
It suffices to show that any other strata $\YY$ has split central number strictly larger than $n$. 

There exists a diagram
$$\vcenter{\xymatrix{
\ZI(\In \XX)\drto_{\pi_3} & \In \XX\times_\XX \ZI\XX \cartesian\rto \dto^{\pi_2}\ar@{_{(}->}_j[l]&
  \ZI\XX\dto^{\pi_1}\\
&\In \XX\rto &\XX}}
$$
where the square is cartesian. For any object $x$ of $\XX$, elements of $\In \XX$ over $x$ are pairs
$(x,\phi)$ such that $\phi \in \Aut(x)$ and objects of $\ZI\XX$ over $x$ are pairs
$(x,\psi)$ where $\psi \in Z(\Aut(x))$.
The fibered product $\In \XX \times_\XX \ZI \XX$ is hence the stack of triples $(x,\phi,\psi)$ with $x, \phi$ and $\psi$ as above. 
On the other hand, $\ZI(\In \XX)$ is the stack of the objects 
$(x,\phi,\psi)$ such that $\phi \in \Aut(x), \psi \in Z(Z_{\Aut(x)}(\phi))$.
Hence there is an embedding of the fibered product into $\ZI(\In \XX)$. Restricting to a substack $\YY \subset \In \XX$ we get the following diagram.
$$\vcenter{\xymatrix{
\ZI \YY \drto_{\pi_3} & \YY \times_\XX \ZI\XX \cartesian\rto \dto^{\pi_2}\ar@{_{(}->}_-j[l]&
  \ZI\XX\dto^{\pi_1}\\
&\YY \rto &\XX\,}}
$$
In this diagram the embedding $j$ is necessarily a union of connected components, because
all vertical and diagonal maps in the diagram are representable finite
\'etale covering maps. Note also that there is a canonical section, $\delta$, to $\pi_3: I(\YY) \to \YY$ via the diagonal $\YY \to \YY \times_\XX \YY$ since any automorphism of an object $x$ in $\XX$ is in its own centralizer. It is obvious that any section of $\pi_1$ pullback to a (distinct) section of $\pi_2$ and gives a (distinct) section of $\pi_3$. This shows that inertia endomorphism preserves $K_{\geq n}$. 

For the action of inertia on the graded piece $K_{\geq n}/ K_{> n}$ we show that if $\YY$ is a component of $\In \XX$ which is not a section of $\pi_1$, then the associated section $\delta$ is not induced by pulling back sections of $\pi_1$. In fact, if $\YY$ is not contained in $\ZI \XX$ then $\delta$ does not lift to $\pi_2$ and we are done. Otherwise, (when $\YY$ is completely contained in $\ZI \XX$), $\delta$ lifts to a section of $\pi_2$ but the image of this section in $\ZI \XX$ is $\YY$ itself, which is not a degree one cover of $\XX$. 
\end{proof}

\subsection{Local finiteness and diagonalization}

In this section we use the notation $\prod_{\XX}^k \YY$ to denote the $k$-fold fiber product of a stack $\YY$ by itself over $\XX$.
We use the notation $I^{(k)} \XX$ for $k$-times application of the inertia construction on the stack $\XX$. We may think of the objects of $I^{(k)} \XX$ as tuples $(x, f_1, \ldots, f_k)$ of an object $x$ in $\XX$ and pairwise commuting automorphisms $f_1, \ldots, f_k$. A morphism $(x, f_1, \ldots, f_k) \to (y, g_1, \ldots, g_k)$ is an arrow $h: x \to y$ of $\XX$ satisfying $h \o f_i = g_i \o h$ for all $i= 1, \ldots, k$. 

\begin{lem}\label{lem:int-cov} Let $\YY \to \XX$ be a finite \'etale representable morphism of algebraic stacks. Then the following family is a finite set up to isomorphism of stacks.
$$\mathcal C(\YY \to \XX) = \{ \WW: \WW \text{ is a connected component of } \prod_\XX^k \YY \text{ for some } k \geq 0\}$$ 
\end{lem}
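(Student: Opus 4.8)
The plan is to reduce the statement to a bound that does not depend on $k$, by exploiting the fact that a finite étale cover $\YY \to \XX$ of fixed degree $d$ is classified, étale-locally on $\XX$, by a homomorphism from the fundamental groupoid to $S_d$. First I would reduce to the case where $\XX$ is connected, and indeed to an irreducible gerbe (or more precisely work étale-locally): choosing an étale cover $U \to \XX$ with $U$ connected over which $\YY$ becomes a trivial degree-$d$ cover, the connected components of $\prod_\XX^k \YY$ pull back to (unions of) connected components of $\prod_U^k (\YY_U)$, and the latter is a disjoint union of copies of $U$ indexed by the orbits of the monodromy action. So the combinatorial heart is: the monodromy group $\Gamma$ acting on the fiber $F$ (a $d$-element set) acts diagonally on $F^{\times k}$, and I must bound the number of \emph{isomorphism types} of $\Gamma$-sets arising as orbits of this diagonal action, uniformly in $k$.

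The key observation is that an orbit of $\Gamma$ on $F^{\times k}$ through a point $(s_1,\dots,s_k)$ is, as a $\Gamma$-set, isomorphic to $\Gamma/H$ where $H = \bigcap_i \Stab_\Gamma(s_i)$ is an intersection of point-stabilizers — hence $H$ ranges only over the (finite) set of subgroups of $\Gamma$ obtainable as intersections of the finitely many subgroups $\Stab_\Gamma(s)$, $s \in F$. Since $\Gamma$ is finite, there are only finitely many such subgroups, and a $\Gamma$-set of the form $\Gamma/H$ has isomorphism type determined by the conjugacy class of $H$; thus the orbit, as an abstract $\Gamma$-set, takes only finitely many isomorphism types regardless of $k$. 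Descending back along $U \to \XX$: a connected component $\WW$ of $\prod_\XX^k \YY$ has $\WW_U$ a disjoint union of copies of $U$ forming a single such $\Gamma$-orbit, so $\WW$ is recovered from $U \to \XX$ (fixed data) together with the finite gluing datum of that $\Gamma$-set, whence $\WW$ itself lies in a finite list up to isomorphism. Finally I would note $k=0$ contributes only $\XX$ itself, and assemble the finitely many cases.

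The main obstacle is the descent step: passing from ``finitely many isomorphism types of $\Gamma$-orbits'' back to ``finitely many isomorphism types of stacks $\WW$ over $\XX$.'' One must check that the connected component $\WW$ is determined up to isomorphism by the isomorphism class of the $\Gamma$-set $\WW_U$ together with the fixed cover $U \to \XX$ — i.e. that two components with isomorphic monodromy $\Gamma$-sets are isomorphic as stacks over $\XX$ (even non-canonically, which suffices here since we only count isomorphism classes). This is essentially the equivalence between finite étale covers of $\XX$ (of bounded degree) and finite sets with $\pi_1$-action, applied relative to the chosen $U$; I would invoke the étale fundamental groupoid / Galois-category formalism for stacks, being slightly careful that $\XX$ need only be connected, not pointed, so one works with the fundamental groupoid rather than a group. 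The remaining bookkeeping — that only finitely many degrees $d$ occur (just the fixed degree of $\YY$) and that finitely many conjugacy classes of subgroups of a finite group times finitely many choices of $U$-data give a finite total — is routine.
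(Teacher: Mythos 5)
Your argument is correct and is essentially the paper's own: the paper takes the Galois closure $\overline{\YY}\to\XX$ and notes that every connected component of a fibre power is an intermediate cover of it, which under the Galois correspondence is exactly your observation that each component corresponds to a transitive $\Gamma$-set $\Gamma/H$ with $H$ an intersection of point stabilizers, of which a finite group has only finitely many. The descent step you single out (passing from isomorphism of monodromy $\Gamma$-sets to isomorphism of covers of $\XX$) is precisely the Galois-category equivalence the paper invokes implicitly via the Galois closure.
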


\begin{proof} 
This is trivial since the Galois closure of $\YY$ with respect to $\XX$ is 
a finite \'etale $\XX$-stack $\overline\YY \to\XX$. And every element in the above family is isomorphic to an intermediate cover, in between $\overline
\YY$ and $\YY$. 
\end{proof}

\begin{cor}\label{cor:int-cov-prod} Let $\YY_1, \cdots, \YY_s$ be finitely many algebraic stacks, finite \'etale over $\XX$. The the following family is finite up to isomorphism.
$$\{ \WW: \WW \text{ is connected component of } \prod_\XX^{k_1} \YY_1\times_\XX \cdots\times_\XX \prod_\XX^{k_s} \YY_s \text{ for } k_1, \ldots, k_s \geq 0\}$$ 
\end{cor}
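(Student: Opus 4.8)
The plan is to reduce Corollary~\ref{cor:int-cov-prod} to Lemma~\ref{lem:int-cov} by observing that an iterated fiber product of several finite \'etale $\XX$-stacks is itself a connected component of a fiber power of a single finite \'etale $\XX$-stack, together with the fact that the class of finite \'etale $\XX$-stacks is closed under disjoint union.

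First I would set $\YY = \YY_1 \sqcup \cdots \sqcup \YY_s$, which is again finite \'etale over $\XX$ since finiteness and \'etaleness are both stable under finite disjoint union. Next I would analyze the fiber power $\prod_\XX^m \YY$ for $m = k_1 + \cdots + k_s$. Because $\YY$ is a disjoint union, this fiber power decomposes as a disjoint union, over all functions assigning to each of the $m$ factors one of the indices $1,\ldots,s$, of the corresponding ``mixed'' fiber products; in particular the summand corresponding to the function that picks index $1$ on the first $k_1$ factors, index $2$ on the next $k_2$, and so on, is exactly $\prod_\XX^{k_1}\YY_1 \times_\XX \cdots \times_\XX \prod_\XX^{k_s}\YY_s$. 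Hence every connected component of this mixed fiber product is a connected component of $\prod_\XX^m\YY$ for some $m \geq 0$. Then I would apply Lemma~\ref{lem:int-cov} to the single morphism $\YY \to \XX$ to conclude that the set of all such connected components, ranging over all $m$, is finite up to isomorphism; a fortiori the subfamily arising from the mixed fiber products over all tuples $(k_1,\ldots,k_s)$ is finite up to isomorphism, which is the assertion.

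I do not expect a serious obstacle here: the only points that need a word of care are that a finite disjoint union of finite \'etale $\XX$-stacks is finite \'etale (so that Lemma~\ref{lem:int-cov} genuinely applies), and that fiber products commute with disjoint unions, so that the ``mixed'' fiber product literally appears as an open-and-closed substack of $\prod_\XX^m\YY$ and therefore its connected components are among those of $\prod_\XX^m\YY$. Both are standard, so the proof is essentially a one-line reduction once these remarks are in place.
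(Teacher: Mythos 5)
Your argument is correct: a finite disjoint union $\YY=\YY_1\sqcup\cdots\sqcup\YY_s$ is again finite \'etale (and representable) over $\XX$, fiber products over $\XX$ distribute over disjoint unions, so the mixed product $\prod_\XX^{k_1}\YY_1\times_\XX\cdots\times_\XX\prod_\XX^{k_s}\YY_s$ is an open-and-closed substack of $\prod_\XX^{m}\YY$ with $m=k_1+\cdots+k_s$, and its connected components are therefore among those of $\prod_\XX^m\YY$; one application of Lemma \ref{lem:int-cov} to $\YY\to\XX$ then gives the finiteness. (The degenerate cases $k_i=0$, and $m=0$, cause no trouble since the lemma's family already includes $k=0$.)

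This is, however, a different reduction from the one in the paper. There, the factors are kept separate: one uses the $s$ projections $p_\ell$ from the mixed fiber product to $\prod_\XX^{k_\ell}\YY_\ell$ together with the open-and-closed immersion of the fiber product over $\XX$ into the absolute product $\prod_\XX^{k_1}\YY_1\times\cdots\times\prod_\XX^{k_s}\YY_s$, so that any component $\WW$ is identified with a connected component of $p_1(\WW)\times\cdots\times p_s(\WW)$, a product of members of the families $\mathcal C(\YY_\ell\to\XX)$; Lemma \ref{lem:int-cov} is then invoked once for each $\YY_\ell$, and finiteness follows because there are finitely many such products, each with finitely many components. Your version trades this bookkeeping for a single application of the lemma to the disjoint union, at the cost of the (standard) observations you already flag — stability of finite \'etale covers under finite disjoint union and compatibility of fiber products with disjoint unions — and it arguably gives the cleaner statement, since all the mixed products for all tuples $(k_1,\ldots,k_s)$ are absorbed at once into the fiber powers of a single cover. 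The paper's route, on the other hand, exhibits each component explicitly as a component of a product $\WW_1\times\cdots\times\WW_s$ of intermediate covers, which is slightly more informative about where the components live. Either proof is acceptable.
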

\begin{proof} There are $s$ projection maps 
$$p_\ell: \prod_\XX^{k_1} \YY_1\times_\XX \cdots\times_\XX \prod_\XX^{k_s} \YY_s  \to \prod_\XX^{k_\ell} \YY_\ell, \quad \ell = 1, \cdots, s$$
which are all finite \'etale and in particular closed and open. The immersion  
$$i: \prod_\XX^{k_1} \YY_1\times_\XX \cdots\times_\XX \prod_\XX^{k_s} \YY_s \to \prod_\XX^{k_1} \YY_1\times \cdots\times \prod_\XX^{k_s} \YY_s$$
is similarly closed and open.
 Hence $\WW$ is isomorphic to its image $i(\WW)$ which is a connected component of 
$\pi_1 (\WW)\times \cdots\times \pi_s (\WW)$. 
However any fiber product $\WW_1 \times \cdots \times \WW_s$
where $\WW_i \in \mathcal C(\YY_i \to \XX)$ has finitely many connected components and by Lemma \ref{lem:int-cov} there are only finitely many such fiber products.
\end{proof}

\begin{cor} Let $\XX$ be an irreducible gerbe. Then the following family is finite up to isomophism.
$$\{ \WW: \WW \text{ is a connected component of } \In^{(m)} \XX \text{ for some } m \geq 0\}$$ 
\end{cor}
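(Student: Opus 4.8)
The plan is to realize $\In^{(m)}\XX$, up to connected components, as an open and closed substack of an $m$-fold fibre product of copies of $\In\XX$ over $\XX$, and then to invoke Corollary \ref{cor:int-cov-prod}. The first step is to produce a natural morphism
$$\iota_m\colon \In^{(m)}\XX \longrightarrow \textstyle\prod_\XX^m \In\XX$$
sending a tuple $(x, f_1, \ldots, f_m)$ — an object $x$ of $\XX$ equipped with pairwise commuting automorphisms — to the tuple $\bigl((x,f_1), \ldots, (x,f_m)\bigr)$, and sending a morphism $h$ to itself. Comparing the defining conditions (a morphism $(x,\vec f)\to(y,\vec g)$ in $\In^{(m)}\XX$ is an $h\colon x\to y$ with $hf_i = g_i h$, while in $\prod_\XX^m\In\XX$ it is an $h$ with $hf_ih^{-1}=g_i$) shows that $\iota_m$ is a fully faithful functor whose essential image is closed under isomorphism, hence a monomorphism of stacks.

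The key point is that $\iota_m$ is in fact an open and closed immersion, and I would check this étale-locally on $\XX$. Choosing an étale cover $U\to\XX$ over which $\XX$ becomes the neutral gerbe $B_U G$ for a constant finite $U$-group scheme $G$, the morphism $\iota_m$ pulls back to the inclusion $[(G^{\times m})^{\mathrm{comm}}/G]\hra [G^{\times m}/G]$, where $(G^{\times m})^{\mathrm{comm}}\subseteq G^{\times m}$ is the subscheme of pairwise commuting $m$-tuples; since $G^{\times m}$ is a finite discrete $U$-scheme this subscheme is a union of connected components, so the inclusion is an open and closed immersion, and being an open and closed immersion is étale-local on the target. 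Consequently $\In^{(m)}\XX$ is a disjoint union of connected components of $\prod_\XX^m\In\XX$, so every connected component of $\In^{(m)}\XX$ is one of $\prod_\XX^m\In\XX$. I expect this reduction to the local model $[G^{\times m}/G]$ — i.e. knowing that an irreducible gerbe is étale-locally a neutral gerbe with constant finite band, and that iterated inertia commutes with this base change — to be the main technical obstacle; the combinatorial fact about $(G^{\times m})^{\mathrm{comm}}$ is then immediate.

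Finally I would decompose $\In\XX = \bigsqcup_{\alpha=1}^{s}\YY_\alpha$ into its finitely many connected components; since $\XX$ is an irreducible gerbe each $\YY_\alpha\to\XX$ is representable, finite and étale (and each $\YY_\alpha$ is again an irreducible gerbe, by the remark following the discussion of central inertia). Then $\prod_\XX^m\In\XX$ is the disjoint union, over all functions $c\colon\{1,\ldots,m\}\to\{1,\ldots,s\}$, of the fibre products $\YY_{c(1)}\times_\XX\cdots\times_\XX\YY_{c(m)}$, each of which is isomorphic to some $\prod_\XX^{k_1}\YY_1\times_\XX\cdots\times_\XX\prod_\XX^{k_s}\YY_s$ with $k_i\geq 0$ and $\sum_i k_i = m$. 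By Corollary \ref{cor:int-cov-prod} applied to $\YY_1,\ldots,\YY_s$, the connected components of all these fibre products — as $m$, equivalently $(k_1,\ldots,k_s)$, varies over all nonnegative tuples — form a finite set up to isomorphism. Combined with the previous paragraph, and noting the trivial cases $m=0$ (the only component is $\XX$, which is the empty fibre product) and $m=1$ (the components are the $\YY_\alpha$), this proves that $\{\WW : \WW$ is a connected component of $\In^{(m)}\XX$ for some $m\geq 0\}$ is finite up to isomorphism.
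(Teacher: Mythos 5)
Your proof is correct and takes essentially the same route as the paper: both realize $\In^{(m)}\XX$ as an open and closed substack of $\prod_\XX^m\In\XX$, decompose $\In\XX$ into its finitely many connected components $\YY_1,\ldots,\YY_s$ (each finite \'etale over $\XX$), and conclude via Corollary \ref{cor:int-cov-prod}. The only cosmetic difference is in how the open-and-closed claim is checked: the paper notes that both $\In^{(m)}\XX$ and $\prod_\XX^m\In\XX$ are finite \'etale over $\In\XX$, so the monomorphism $j$ is itself finite \'etale and hence an open and closed immersion, whereas you verify the same thing \'etale-locally using the neutral-gerbe model $B_U G$ with $G$ constant finite, where it reduces to the observation that the commuting locus in $G^{\times m}$ is a union of connected components.
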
 
\begin{proof} For an irreducible gerbe $\In \XX \to \XX$ is finite \'etale, hence closed and open and therefore the inertia stratifies to finitely many connected components $\YY_1, \cdots, \YY_s$ which are finite \'etale over $\XX$. 
In the commutative diagram 
$$\diag@C=1em{ 
\In^{(m)} \XX\ \ar@{^(->}[rr]^j\ar[rd] & & \prod_\XX^m \In \XX \ar[ld] \\
& \In \XX } $$
the downward arrows are finite \'etale and hence so is the inclusion $j$. Consequently $j$ is open and closed, and therefore any connected component of $\In^{(m)} \XX$
is a stratum of some substack 
$$\YY_{i_1} \times_{\XX} \cdots \times_\XX \YY_{i_m} \subset \prod_\XX^m \In \XX$$
for a choice of $i_1, \cdots, i_m\in \{ 1, \cdots, s\}$. The claim now follows from Corollary \ref{cor:int-cov-prod}.
\end{proof}

This completes the proof of our main results for Deligne-Mumford stacks:

\begin{thm}[Local finiteness]\label{thm:local-finiteness-DM} Let $\XX$ be a noetherian Deligne-Mumford $B$-stack and $\{ \XX_i \}_{i \in \msf A}$, the stratification of it by irreducible gerbes. Then the $K(\Sch)$-submodule of $K(\DM)$ generated by the set of motivic classes of all $\XX_i$ and all intermediate Galois covers between $\bar{\In \XX_i} \to \In \XX_i$ is finitely generated, invariant under inertia endomorphism, and contains $[\XX]$. 
\end{thm}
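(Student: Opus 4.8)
The plan is to combine the structural facts already established about irreducible gerbes with the finiteness statement of the previous corollary, then argue that the inertia operator is "internal" to the submodule spanned by the relevant intermediate covers.

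First I would reduce to the case of a single irreducible gerbe. By Proposition \ref{prop:strat-DM} we may stratify $\XX$ into finitely many locally closed irreducible gerbes $\XX_i$, and by the scissor relations $[\XX] = \sum_i [\XX_i]$, so it suffices to produce, for each $i$, a finitely generated $K(\Sch)$-submodule $M_i \subseteq K(\DM)$ that is invariant under $\In$ and contains $[\XX_i]$; then $\sum_i M_i$ does the job and is still finitely generated. Fix such an irreducible gerbe $\YY := \XX_i$. By the preceding corollary, the set of isomorphism classes of connected components of $\In^{(m)}\YY$, as $m$ ranges over all nonnegative integers, is a \emph{finite} set; call representatives $\WW_1, \ldots, \WW_N$. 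Let $M$ be the $K(\Sch)$-submodule of $K(\DM)$ generated by $[\WW_1], \ldots, [\WW_N]$ (these are indeed among the intermediate Galois covers appearing in the statement, since each $\WW_j$ embeds as an open-and-closed substack of a fiber product $\prod_\XX^m \In\YY$ and hence is an intermediate cover between $\bar{\In\YY}$ and $\In\YY$ in the sense of Lemma \ref{lem:int-cov}). This $M$ is finitely generated and contains $[\YY] = [\In^{(0)}\YY]$.

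Next I would check invariance under $\In$. It is enough to verify $\In[\WW_j] \in M$ for each generator. Since $\YY$ is an irreducible gerbe and $\WW_j \to \YY$ is finite \'etale representable, $\WW_j$ is itself an irreducible gerbe (cf. the Remark following the discussion of central inertia), so $\In\WW_j \to \WW_j$ is finite \'etale and $\In\WW_j$ stratifies by its connected components into a disjoint union of irreducible gerbes, each finite \'etale over $\YY$. The key point is that each such connected component is again isomorphic to one of the $\WW_1, \ldots, \WW_N$: indeed, since $\WW_j$ is open and closed in $\In^{(m_j)}\YY$ for some $m_j$, the inertia $\In\WW_j$ is open and closed in $\In(\In^{(m_j)}\YY) = \In^{(m_j+1)}\YY$, so its connected components are connected components of $\In^{(m_j+1)}\YY$, hence among our finite list. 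Therefore $\In[\WW_j] = \sum [\WW_{j_\ell}]$ with each $\WW_{j_\ell} \in \{\WW_1, \ldots, \WW_N\}$, which lies in $M$. Hence $\In(M) \subseteq M$, completing the proof.

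The main obstacle — and the place where care is needed — is the claim that connected components of $\In^{(m+1)}\YY$ that arise as components of $\In$ of a component $\WW_j$ of $\In^{(m)}\YY$ really coincide with connected components of $\In^{(m+1)}\YY$ itself, and not merely of some auxiliary fiber product. This is exactly the open-and-closed transitivity argument: the inclusion $\In^{(m+1)}\YY \hookrightarrow \prod_\YY^{m+1}\In\YY$ is open and closed (as in the corollary's proof), and $\In\WW_j$ sits inside $\In^{(m+1)}\YY$ as an open-and-closed substack because inertia of an open-and-closed substack is open and closed. Once this is granted, the finiteness supplied by the previous corollary does all the remaining work, and the argument that $[\XX]$ lies in the module and that the module is $\In$-invariant is essentially bookkeeping.
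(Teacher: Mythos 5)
Your proposal is correct and follows essentially the same route as the paper: the paper deduces the theorem directly from the corollary that the connected components of the iterated inertias $\In^{(m)}\XX_i$ form a finite set up to isomorphism (via the Galois closure and Lemma \ref{lem:int-cov}, Corollary \ref{cor:int-cov-prod}), exactly the module-generating and invariance argument you give. Your explicit open-and-closed transitivity step (that $\In\WW_j$ is open and closed in $\In^{(m_j+1)}\XX_i$, so its components are again on the finite list) is precisely the bookkeeping the paper leaves implicit.
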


\begin{cor}[Diagonalization]\label{cor:diagonalization-DM} The endomorphism $\In:K(\DM)\to K(\DM)$ is diagonalizable, with eigenvalue spectrum equal to $\mb N$, the set of positive integers.
\end{cor}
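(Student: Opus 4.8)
The plan is to use Theorem~\ref{thm:local-finiteness-DM} to reduce to a finite-dimensional invariant subspace, and then read off diagonalizability from Proposition~\ref{prop:DM-filtration-preserved}. Fix $x \in K(\DM)$ and represent it by a finite sum of noetherian Deligne--Mumford $B$-stacks; stratify each of them into irreducible gerbes (Proposition~\ref{prop:strat-DM}), and let $\mathcal S$ be the finite set of irreducible gerbes consisting of all the strata $\XX_i$ together with all connected components of $\In^{(m)}\XX_i$ for $m \geq 0$ (finite by Lemma~\ref{lem:int-cov} and its corollaries, as these are intermediate Galois covers of the $\In\XX_i$). Set $V = \mathrm{span}_\mb Q\{[\WW] : \WW \in \mathcal S\} \subseteq K(\DM)$ and let $M$ be the $K(\Sch)$-submodule it generates, so that $x \in M$ by Theorem~\ref{thm:local-finiteness-DM}. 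Since $\In$ is $K(\Sch)$-linear and $\In[\WW] = \sum_{\WW'}[\WW']$ with $\WW'$ running over the connected components of $\In\WW$ — all again in $\mathcal S$ — both $V$ and $M = K(\Sch)\cdot V$ are $\In$-stable.

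Next I would filter $V$ by split central order, $V = V\cap K_{\geq 1} \supseteq V\cap K_{\geq 2}\supseteq \cdots$. These subspaces are $\In$-stable, the chain is finite because $V$ is finite-dimensional, and it reaches $0$: no nonzero $\mb Q$-combination of the classes $[\WW]$, $\WW \in \mathcal S$, lies in $K_{\geq N+1}$ once $N$ is the largest split central order occurring among the gerbes in $\mathcal S$. By Proposition~\ref{prop:DM-filtration-preserved}, $\In$ acts on the subquotient $(V\cap K_{\geq n})/(V\cap K_{\geq n+1})$ as multiplication by $n$; equivalently $(\In - n)(V\cap K_{\geq n}) \subseteq V\cap K_{\geq n+1}$. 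Composing these inclusions for $n = 1, \dots, N$ gives $\prod_{n=1}^N(\In - n) = 0$ on $V$, hence, by $K(\Sch)$-linearity, on $M = K(\Sch)\cdot V$. As the roots $1, \dots, N$ are distinct integers and $K(\Sch)$ is a $\mb Q$-algebra, their pairwise differences are units, so the associated Lagrange idempotents are $K(\Sch)$-linear polynomials in $\In$ and decompose $M = \bigoplus_{n=1}^N \ker\bigl((\In - n)|_M\bigr)$. In particular $x \in \sum_{n\geq 1}\ker(\In - n)$; since $x$ was arbitrary, $K(\DM) = \sum_{n\geq 1}\ker(\In - n)$, and the sum is direct, again because the eigenvalue differences are units. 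Thus $\In$ is diagonalizable.

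For the eigenvalue spectrum, the annihilating polynomial $\prod_{n=1}^N(\In - n)$ produced on each such $M$ forces every eigenvalue to be one of $1, \dots, N$, hence a positive integer, so the spectrum is contained in $\mb N$. Conversely every $n \geq 1$ occurs: for the classifying stack $\XX$ over $B$ of the constant cyclic group of order $n$ one has $\In[\XX] = n[\XX]$, since on a gerbe banded by an abelian group every automorphism is central (as computed in Section~\ref{ch:Gpd}), and $[\XX] \neq 0$; so $n$ is an eigenvalue. Hence the spectrum is exactly $\mb N$. The only step that is not pure formalism is the claim that the split-central-order filtration on $V$ reaches $0$ — equivalently, that no nonzero $\mb Q$-combination of bounded-order gerbe classes can represent an element of $K_{\geq N+1}$ — which I expect to rest on the monotonicity already exploited in the proof of Proposition~\ref{prop:DM-filtration-preserved}, namely that every stratum of an irreducible gerbe has split central order at least that of the gerbe. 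Granting this, the remainder is the standard principle that an operator triangular over a finite filtration with distinct integer scalars on the graded pieces is diagonalizable, transported from $\mb Q$-vector spaces to modules over the $\mb Q$-algebra $K(\Sch)$.
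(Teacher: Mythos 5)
Your overall route is the paper's: reduce, via the finite family of inertia components of Theorem \ref{thm:local-finiteness-DM}, to a finitely generated $\In$-invariant $K(\Sch)$-submodule, get an annihilating polynomial with distinct positive integer roots from Proposition \ref{prop:DM-filtration-preserved}, and conclude with Lagrange idempotents; the identification of the spectrum via $\B$ of constant cyclic groups is also what the paper intends. But the step you flag yourself is a genuine gap, and the justification you propose does not close it. You filter $V$ by the subspaces $V\cap K_{\geq n}$, where $K_{\geq n}$ is the \emph{globally defined} span of classes of gerbes of split central order at least $n$, and you need this chain to reach $0$, i.e.\ that no nonzero element of $V$ lies in $K_{\geq N+1}$. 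Monotonicity of the split central order under passing to strata only controls how a single gerbe decomposes; it says nothing about arbitrary linear relations in $K(\DM)$, and scissor relations really can push a class strictly upward in the filtration. For instance (over a field, say), let $X$ be a nodal curve with split node admitting a connected finite \'etale double cover $\tilde X\to X$ that becomes disconnected over the complement of the node, and let $\XX=\B_X G$ for the associated nonconstant form $G$ of $\mb Z_3$. Then $\XX$ is an irreducible gerbe of split central order $1$, but cutting along the node writes $[\XX]$ as a sum of two classes of gerbes of split central order $3$, so $[\XX]\in K_{\geq 3}$. Thus membership in $K_{\geq N+1}$ is not controlled by the orders of the gerbes spanning $V$; excluding it is essentially the statement that the ``virtual order'' decomposition is motivic, which is the content of the corollary you are proving, so granting it on the strength of stratum monotonicity is not legitimate (and proving it directly would be circular within your argument).

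The repair is small and leaves the rest of your proof verbatim: do not intersect with the global filtration, but filter $V$ by its spanning set, setting $V_n=\mathrm{span}_{\mb Q}\{[\WW]:\WW\in\mathcal S,\ \nu(\WW)\geq n\}$. The proof of Proposition \ref{prop:DM-filtration-preserved} shows that for $\WW\in\mathcal S$ of split central order $m$ one has $\In[\WW]=m[\WW]+\sum[\WW']$, where the $\WW'$ are the connected components of $\In\WW$ not mapping isomorphically to $\WW$; they again lie in $\mathcal S$ and have order strictly greater than $m$. Hence $(\In-n)V_n\subseteq V_{n+1}$ for every $n$, and $V_{N+1}=0$ holds by construction rather than by an unproven separatedness claim. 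This gives $\prod_{n=1}^N(\In-n)=0$ on $V$, and your idempotent argument, the transport to $M=K(\Sch)\cdot V$, and the determination of the spectrum (upper bound from the annihilating polynomial, lower bound from $\In[\B\mb Z_n]=n[\B\mb Z_n]$) then go through unchanged, recovering exactly the argument the paper intends.
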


\subsection{The operators $\In_r$ and eigenprojections}
\label{sec:projection-formula-for-DM}

Let $\In_r \XX$ be the stack of tuples $(x, s_1, \ldots, s_r)$ where $s_i$ are distinct pairwise commuting automorphisms of $x$. By this we mean that of $x: X \to \XX$ is an $X$-point of $\XX$, and $G= \Aut(x)$ is the $X$-group scheme of automorphisms of $x$, then $s_i$ are sections of $G \to X$ and not any two of them are identical sections. 
This definition applies also to $r=0$. The stack $\In_0 \XX$ is just $\XX$. For $r=1$, $\In_1 \XX$ is the usual inertia. Hence $\In_1$ is diagonalizable with integer eigenvalues.

Note that $\In_r$ is closely related to the $k$-fold inertia operators $I^{(k)}$. It is easy to see that by an inclusion-exclusion argument that they satisfy the following identity,
$$\In_r = \sum_{k=1}^r s(r, k) I^{(k)}\,,$$
where $s(r, k)$ are the signed Stirling number of the first kind.

We use the notation $\ZI_r \XX$ for the substack of $\In_r \XX$ consisting of objects $(x, s_1, \ldots, s_r)$ such that all $s_i$ are in the centre of $\Aut (x)$. The complement locus will be denoted by $\NZI_r \XX$. 

Let $\XX$ be an irreducible gerbe with $\In \XX \to \XX$ an \'etale morphism of degree $n$. Then there exists a Galois covering $\tilde\XX \to \XX$ of $\XX$ such that $\ZI \XX|_{\tilde \XX}$ is a disjoint union of $n$ copies of $\tilde \XX$. So we have 
$$[\ZI_r \XX|_{\tilde \XX}] = r! {n \choose r} [\tilde \XX].$$

We use the notation $\Inj(\ul r, \ul n)$ for the set of injections from a set of cardinality $r$ to a set of of cardinality $n$. So 
$$\#\Inj(\ul r, \ul n)=r! {n \choose r}\,.$$

\begin{thm}
The operators $\In_r$, for all $r\geq0$, preserve the
filtration $K_{\geq k}(\DM)$ by split central number.  On the quotient
$K_{\geq k}(\DM)/K_{>k}(\DM)$, 
the operator $\In_r$ acts as multiplication by $r! {k \choose r}$.
\end{thm}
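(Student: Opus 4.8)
The plan is to mimic, in the Deligne–Mumford setting, the argument already carried out for groupoids in Section~\ref{sec:projection-formula-for-groupoids}, replacing the centre of a finite group by the central inertia stack $\ZI\XX$ and ``number of central elements'' by ``split central number.'' It suffices to prove the statement on classes $[\XX]$ of irreducible gerbes, since these span $K(\DM)$ as a $K(\Sch)$-module by Proposition~\ref{prop:strat-DM}. Fix an irreducible gerbe $\XX$ with $\In\XX\to\XX$ finite \'etale of degree $n$, and let its split central number be $k$, i.e. $\ZI\XX\to\XX$ has exactly $k$ sections. Stratify $\In_r\XX$ by the closed substack $\ZI_r\XX$ and its open complement $\NZI_r\XX$, so that $[\In_r\XX]=[\ZI_r\XX]+[\NZI_r\XX]$ in $K(\DM)$.

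For the central part: pass to a Galois cover $\tilde\XX\to\XX$ splitting $\ZI\XX$ into $n$ copies of $\tilde\XX$, over which $[\ZI_r\XX|_{\tilde\XX}]=r!\binom{n}{r}[\tilde\XX]$ as noted in the excerpt. But the sections of $\ZI\XX\to\XX$ descend the \emph{constant} subfamily: the $k$ sections pick out $k$ of the $n$ sheets already defined over $\XX$, and an injection from $\ul r$ into this set of $k$ sections produces a degree-one component of $\ZI_r\XX$ isomorphic to $\XX$. There are $r!\binom{k}{r}$ such. Thus $[\ZI_r\XX]=r!\binom{k}{r}[\XX]+(\text{classes of irreducible gerbes of split central number}>k)$; this last point is exactly the analogue of the strict inequality $\#Z(G)<\#Z_G(S)$ and is established by the same reasoning as in Proposition~\ref{prop:DM-filtration-preserved}, namely that a component of $\ZI_r\XX$ which is not one of these degree-one pieces acquires extra sections of its own central inertia (the diagonal section $\delta$ together with the pulled-back ones) and hence lands in $K_{>k}(\DM)$. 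For the non-central part $\NZI_r\XX$: every component is a finite \'etale $\XX$-gerbe on which some $s_i$ is non-central, so its automorphism groups strictly contain the centralizer data, and an argument parallel to the $\YY\not\subseteq\ZI\XX$ case of Proposition~\ref{prop:DM-filtration-preserved}—lifting $\delta$ along $\pi_2$ fails—shows each such component has split central number $>k$. Hence $[\NZI_r\XX]\in K_{>k}(\DM)$.

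Combining, $\In_r[\XX]=r!\binom{k}{r}[\XX]$ modulo $K_{>k}(\DM)$, which is the claim on the graded piece; that $\In_r$ preserves the filtration $K_{\geq k}(\DM)$ follows from the same computation together with the fact, from Proposition~\ref{prop:DM-filtration-preserved} (and the finiteness result Theorem~\ref{thm:local-finiteness-DM}), that all strata appearing are irreducible gerbes lying in $K_{\geq k}(\DM)$.

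The main obstacle I anticipate is the bookkeeping in identifying \emph{exactly which} connected components of $\ZI_r\XX$ are degree-one covers of $\XX$: one must argue carefully that sections of $\ZI_r\XX\to\XX$ correspond bijectively to injections $\ul r\to\{\text{sections of }\ZI\XX\to\XX\}$, i.e. that distinctness of the sections $s_1,\dots,s_r$ matches distinctness as elements of the set of $k$ global central sections, and that no ``accidental'' extra degree-one components arise from monodromy. This is the place where the hypothesis that $\XX$ is an \emph{irreducible} gerbe (so $\In\XX\to\XX$ is genuinely finite \'etale with a well-behaved Galois closure, via Lemma~\ref{lem:int-cov}) is used in an essential way, and it is the step deserving the most care; everything else is a routine transcription of the group-theoretic counting in Section~\ref{sec:projection-formula-for-groupoids}.
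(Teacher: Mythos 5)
Your proposal follows the same route as the paper: reduce to an irreducible gerbe $\XX$, split $\In_r\XX$ into $\ZI_r\XX$ and $\NZI_r\XX$, pass to a Galois cover $\tilde\XX\to\XX$ splitting $\ZI\XX$, identify the $r!\binom{k}{r}$ degree-one components of $\ZI_r\XX$ with injections of $\ul r$ into the $k$ split sections, and show every remaining component (a quotient $\tilde\XX/\Stab_\Gamma\phi$, or a non-central stratum where some $s_i$ fails to be central) acquires extra sections of its central inertia and hence lies in $K_{>k}(\DM)$. The ``bookkeeping'' you flag is exactly what the paper resolves by writing $\ZI_r\XX=\tilde\XX\times_\Gamma\Inj(\ul r,\ul n)$ and counting $\Gamma$-fixed injections, so the argument is correct and essentially identical to the paper's.
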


\begin{proof}  Let $\XX$ be an irreducible gerbe with split central number $k$ and $\ZI \XX \to \XX$ be of degree $n$. Let $\tilde \XX \to \XX$ be a splitting cover for $\ZI \XX \to \XX$ with Galois group $\Gamma$. Hence $\Gamma$ acts on $\ul n$. 
$$\diag{
\ZI\XX|_{\tilde \XX} \ar[r]^{\iota} \ar[d] & \ZI \XX \ar[d] \\
\tilde\XX \ar[r] & \XX}$$
Then $\ZI\XX|_{\tilde \XX}$ is the disjoint union of $n$ copies of $\tilde \XX$ and $\Gamma$ acts on it by permuting these copies. Let us rename the $i$-th copy to $\tilde \YY$ and the image to $\YY$. The integer $i \in \ul n$ is fixed under the action of $\Gamma$ precisely when $\YY\cong \tilde \YY/\Gamma$ via the horizontal morphism. Since $\tilde \XX / \Gamma \cong \XX$, the above happens precisely when $\YY$ is isomorphic to a copy of $\XX$ by the vertical morphism. By Proposition \ref{prop:DM-filtration-preserved} this only is the case if $\YY$ is one of the $k$ copies of $\XX$ contributing to the split central number of $\XX$. Hence the set of fixed points of $\Gamma$ is of size $k$. Also,
\begin{align*}
\tilde \XX \times \Inj(\ul r, \ul n) &\longiso \ZI_r \XX|_{\tilde \XX}\,,
\end{align*}
and the action of $\Gamma$ on $\ul n$ induces an action of it on $\Inj(\ul r, \ul n)$ . A morphism $\phi: \ul r \hra \ul n$ is invariant under this action if every element in the image of $\phi$ is so. Therefore the number of fixed points of $\Inj (\ul r, \ul n)$ is $r! {k \choose r}$. We may hence calculate as follows:
\begin{align*}
\ZI_r[\XX]
&=[ {\tilde \XX} \times_\Gamma \Inj(\ul r, \ul n) ]\\
&=\sum_{\phi\in \Inj(\ul r, \ul n)/\Gamma}[\tilde
  \XX/\Stab_\Gamma\phi]\\
&=\sum_{\phi\in\Inj(\ul r,\ul n)^\Gamma}[\XX]+ 
\sum_{\stack{\phi\in\Inj(\ul r,\ul n)/\Gamma}{\Stab_\Gamma\phi\not=\Gamma}}[\tilde \XX/\Stab_\Gamma\phi] 
\end{align*}
Thus, we conclude,
$$\ZI_r[\XX]=r!\,{k \choose r}\,[\XX]+
\sum_{\stack{\phi\in\Inj(\ul r,\ul
    n)/\Gamma}{\Stab_\Gamma\phi\not=\Gamma}}[\tilde
  \XX/\Stab_\Gamma\phi] \,.$$
Finally note that each intermediate cover $\YY= \tilde \XX / \Stab_\Gamma \phi$ has a strictly larger split central number $k$. In fact, $\ZI \YY = \ZI \XX|_\YY$ so  every section of $\ZI \XX \to \XX$ pulls back to a section of $\ZI \YY \to \YY$ but also $\ZI \YY \to \YY$ has sections induced by $\phi$ that do not descend to $\ZI \XX \to \XX$.

Finally for every irreducible gerbe $\YY \subseteq \NZI_r \XX$, the split central rank is strictly larger than $n$, because at least one of the sections $s_i$ is noncentral. 
\end{proof}

\begin{cor}
The operators $\In_r$, for $r\geq0$ are simultaneously
diagonalizable. The common eigenspaces form a family $\Pi_k(\DM)$ of
subspaces of $K(\DM)$ indexed by non-negative integers
$k\geq0$, and 
$$K(\DM)=\bigoplus_{k\geq0}\Pi_k(\DM)\,.$$
Let $\pi_k$ denote the projection onto $K^k(\DM)$. We
have
$$\In_r\pi_k=r!\,{k \choose r}\pi_k\,,$$
for all $r\geq0$, $k\geq0$. 
\end{cor}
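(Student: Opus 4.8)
The plan is to reduce the statement to the corresponding fact about the operators $\In_r$ acting on the graded pieces, exactly as in the groupoid case treated in \S\ref{sec:projection-formula-for-groupoids}. First I would record that by Theorem~\ref{thm:local-finiteness-DM} each class $[\XX]$ lies in a finite-dimensional $K(\Sch)$-submodule that is invariant under every $\In_r$: indeed that theorem exhibits an invariant finitely generated submodule for $\In = \In_1$, and since $\In_r = \sum_{k=1}^r s(r,k)\, I^{(k)}$ and $I^{(k)}$ is the $k$-fold composite of $\In_1$, the same submodule is stable under all $\In_r$. So it suffices to prove simultaneous diagonalizability on each such finite-dimensional invariant piece, where the question becomes a question about commuting operators on a finite-dimensional vector space.

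Next I would invoke the previous theorem (the one immediately preceding this corollary), which says $\In_r$ preserves the split-central-order filtration $K_{\geq k}(\DM)$ and acts by multiplication by $r!\binom{k}{r}$ on the associated graded $K_{\geq k}/K_{>k}$. On the finite-dimensional invariant submodule this exhibits every $\In_r$ as upper-triangular with respect to the induced filtration, with diagonal entries $r!\binom{k}{r}$ on the $k$-th graded piece. In particular $\In_1$ already has distinct eigenvalues $k = 1, 2, \dots$ across distinct graded pieces, so $\In_1$ restricted to the finite-dimensional submodule is diagonalizable, and its eigenspace decomposition refines the filtration; since all the $\In_r$ commute with $\In_1$ (they are polynomials in the $I^{(k)}$, which are composites of $\In_1$, so all commute with one another) they preserve each $\In_1$-eigenspace. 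On each such eigenspace — which is a single graded piece — the operator $\In_r$ acts as the scalar $r!\binom{k}{r}$, so automatically $\In_r \pi_k = r!\binom{k}{r}\,\pi_k$. This simultaneously diagonalizes the whole family and identifies the common eigenspaces $\Pi_k(\DM)$ with the $\In_1$-eigenspaces, giving $K(\DM) = \bigoplus_{k \geq 0} \Pi_k(\DM)$ after passing to the colimit over the finite-dimensional invariant submodules.

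The one point that needs a small amount of care, and which I expect to be the main (minor) obstacle, is the commutativity of the $\In_r$ among themselves and with $\In_1$: one must check that the $k$-fold inertia operators $I^{(k)}$ genuinely commute. This follows because $I^{(k)}\XX$ is naturally the stack of tuples $(x, f_1,\dots,f_k)$ of pairwise commuting automorphisms — a description manifestly symmetric in the $f_i$ and compatible in the evident way with appending or permuting entries — so $I^{(k)} = \In_1^{\,k}$ as endomorphisms of $K(\DM)$, whence all the $I^{(k)}$, and hence all the $\In_r$, lie in the commutative subalgebra generated by $\In_1$. Once this is noted, nothing further is needed: the corollary is the Cayley–Hamilton-style observation that a family of commuting operators, one of which ($\In_1$) is diagonalizable with the eigenspace decomposition given by the filtration graded pieces, is simultaneously diagonalized by that same decomposition, and the eigenvalue of $\In_r$ on $\Pi_k$ is read off from the previous theorem as $r!\binom{k}{r}$. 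The final renaming of the projection target in the statement (``$\pi_k$ denote the projection onto $K^k(\DM)$'') should of course read $\Pi_k(\DM)$; I would state it that way.
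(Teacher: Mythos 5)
Your proof is correct and follows essentially the route the paper intends (and leaves implicit, just as in the groupoid case): local finiteness from Theorem \ref{thm:local-finiteness-DM}, triangularity of each $\In_r$ with respect to the split-central-order filtration with diagonal scalars $r!\binom{k}{r}$, and diagonalizability of $\In_1$ with distinct eigenvalues on the graded pieces, upgraded to simultaneous diagonalization of the whole family. Your explicit check that $I^{(k)}=\In^k$, so that every $\In_r=\sum_k s(r,k)I^{(k)}$ lies in the commutative algebra generated by $\In$ and hence preserves the $\In_1$-eigenspaces, is precisely the point the paper takes for granted, and you are also right that ``the projection onto $K^k(\DM)$'' in the statement should read $\Pi_k(\DM)$.
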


\begin{cor}
For $r\geq1$, we have
$$\ker \In_r=\bigoplus_{k<r}\Pi_k(\DM)\,.$$
\end{cor}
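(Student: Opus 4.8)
\begin{proof}
The plan is to read off the kernel directly from the eigenprojection decomposition established above. Recall from the previous corollary that $K(\DM)=\bigoplus_{k\geq0}\Pi_k(\DM)$ and that $\In_r\pi_k=r!\,{k\choose r}\,\pi_k$ for all $r,k\geq0$.

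Given any class $x\in K(\DM)$, write $x=\sum_{k\geq0}\pi_k(x)$ with $\pi_k(x)\in\Pi_k(\DM)$. Applying $\In_r$ and using the eigenvalue formula gives
$$\In_r(x)=\sum_{k\geq0}\In_r\pi_k(x)=\sum_{k\geq0}r!\,{k\choose r}\,\pi_k(x).$$
Since the summands $r!\,{k\choose r}\,\pi_k(x)$ lie in the distinct summands $\Pi_k(\DM)$ of a direct sum, $\In_r(x)=0$ if and only if $r!\,{k\choose r}\,\pi_k(x)=0$ for every $k\geq0$. We are working over $\mb Q$, so $r!$ is invertible, and ${k\choose r}$ is a nonzero integer (hence invertible in $\mb Q$) exactly when $k\geq r$, while ${k\choose r}=0$ precisely when $0\leq k<r$. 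Therefore the condition $\In_r(x)=0$ is equivalent to $\pi_k(x)=0$ for all $k\geq r$, i.e.\ to $x\in\bigoplus_{k<r}\Pi_k(\DM)$. This proves the claimed equality of subspaces.
\end{proof}

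There is essentially no obstacle here: the statement is a formal consequence of simultaneous diagonalizability together with the explicit eigenvalues $r!\binom{k}{r}$, the only point requiring care being that the coefficient ring contains $\mb Q$ so that a nonzero eigenvalue is invertible and the vanishing can be tested componentwise in the direct sum.
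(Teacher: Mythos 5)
Your proof is correct and is exactly the formal argument the paper intends: the corollary is stated as an immediate consequence of the simultaneous diagonalization $K(\DM)=\bigoplus_k\Pi_k(\DM)$ with eigenvalues $r!\binom{k}{r}$, and your componentwise reading of the kernel (using that $\binom{k}{r}=0$ precisely for $k<r$ and that nonzero eigenvalues are invertible over $\mb Q$) is the intended verification.
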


\begin{cor}
For every $k\geq0$, we have
$$\pi_k=\sum_{r=k}^\infty \frac{(-1)^{r+k}}{r!} { r \choose k} \,\In_r\,.$$
In
particular, $\pi_0 = \sum_{r=0}^\infty \frac{(-1)^r}{r!} \In_r$, and $\pi_1= \sum_{r=1}^\infty \frac{(-1)^{r-1}}{(r-1)!}\,\In_r\,.$
\end{cor}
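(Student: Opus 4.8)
The plan is to repeat, essentially verbatim, the argument of Corollary~\ref{cor:beautiful-invesion} from the groupoid case, now fed by the DM-version of the eigenprojection identity established in the preceding two corollaries, namely $\In_r\pi_k = r!\binom{k}{r}\pi_k$ together with $\id=\sum_{\ell\geq0}\pi_\ell$. First I would record the bookkeeping that makes the infinite sums legitimate: by Theorem~\ref{thm:local-finiteness-DM} (equivalently Corollary~\ref{cor:diagonalization-DM}) every $x\in K(\DM)$ lies in a finite partial sum $\bigoplus_{\ell\leq N}\Pi_\ell(\DM)$ of the eigenspace decomposition, so $\id=\sum_{\ell\geq0}\pi_\ell$ makes sense pointwise, and since $\In_r$ acts on $\Pi_\ell(\DM)$ by the scalar $r!\binom{\ell}{r}$, which vanishes for $r>\ell$, we get $\In_r x=0$ for $r>N$. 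Hence $\sum_{r=k}^\infty \tfrac{(-1)^{r+k}}{r!}\binom{r}{k}\In_r$ is a locally finite, well-defined endomorphism of $K(\DM)$, and all rearrangements below involve only finitely many nonzero terms when evaluated on a fixed class.

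Next I would expand $\In_r$ in the eigenbasis: from $\id=\sum_{\ell\geq0}\pi_\ell$ and $\In_r\pi_\ell = r!\binom{\ell}{r}\pi_\ell$ one obtains $\In_r=\sum_{\ell\geq0} r!\binom{\ell}{r}\pi_\ell$. Substituting into the claimed formula, interchanging the (locally finite) sums over $r$ and $\ell$, and invoking the ``beautiful identity'' $\sum_{r}(-1)^{r+k}\binom{\ell}{r}\binom{r}{k}=\delta_{\ell k}$ of \cite{MobiusInversion}, exactly as in the proof of Corollary~\ref{cor:beautiful-invesion}, gives
$$\sum_{r\geq k}\frac{(-1)^{r+k}}{r!}\binom{r}{k}\In_r
=\sum_{\ell\geq0}\left(\sum_{r\geq0}(-1)^{r+k}\binom{r}{k}\binom{\ell}{r}\right)\pi_\ell
=\sum_{\ell\geq0}\delta_{\ell k}\,\pi_\ell=\pi_k\,,$$
where replacing $\sum_{r\geq k}$ by $\sum_{r\geq 0}$ is harmless since $\binom{r}{k}=0$ for $r<k$. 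The stated special cases $\pi_0=\sum_{r\geq0}\tfrac{(-1)^r}{r!}\In_r$ and $\pi_1=\sum_{r\geq1}\tfrac{(-1)^{r-1}}{(r-1)!}\In_r$ then follow by setting $k=0$ and $k=1$ and using $\binom{r}{0}=1$, $\binom{r}{1}=r$.

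The only point requiring genuine care — the ``main obstacle'', such as it is — is precisely the local finiteness of all the series in play, so that the double sum may be reindexed term by term; this is exactly what the diagonalization and local-finiteness results of this section supply, and once that is granted the computation is formally identical to the groupoid case.
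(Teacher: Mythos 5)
Your proposal is correct and is essentially the paper's own argument: the paper simply refers back to the proof of Corollary \ref{cor:beautiful-invesion}, i.e.\ expanding $\In_r=\sum_{\ell\geq0}r!\binom{\ell}{r}\pi_\ell$ via $\id=\sum_\ell\pi_\ell$ and applying the identity $\sum_r(-1)^{r+k}\binom{\ell}{r}\binom{r}{k}=\delta_{\ell k}$, with the sums locally finite by the diagonalization results for $K(\DM)$. Your explicit bookkeeping of the local finiteness is a welcome but minor elaboration of the same route.
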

The proof is similar to that of Corollary \ref{cor:beautiful-invesion}.

\section{Inertia endomorphism of algebraic stacks} 
\label{ch:artin}

In the rest of this paper, we need to work over a field of characteristic zero. So we let $k = \mb C$, we shorten our notation for the category of algebraic stacks $\St/\mb C$ to $\St$, and let the base category be that of the $\mb C$-varieties, denoted by $\Var$.  

\subsection{Central band of a gerbe}

We recall that to any algebraic stack $\XX$ we can associate an fppf coarse moduli sheaf $X$ of isomorphism classes of objects of $\XX$ 
\cite[Rmk. 3.19]{LMB}. The morphism of stacks $\XX \to X$ is always an fppf (in particular, \'etale) gerbe.

\begin{prop}\label{prop:central-inertia} Let $\XX \to X$ be an \'etale gerbe. Then there exists a sheaf of abelian groups $Z \to X$ and a morphism of sheaves of groups $\phi: Z \times_X \XX \to \In \XX$ such that 
\begin{enumerate}
\item For every $s: S \to \XX$, the induced morphism of sheaves of groups $s^* \phi: Z|_S \to \uAut (s)$ identifies $Z|_S$ with the centre of the sheaf of groups $\uAut (s)$; and,
\item The pair $(Z, \phi)$ is unique, up to isomorphism of sheaves of groups over $X$.
\end{enumerate}
\end{prop}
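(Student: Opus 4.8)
The plan is to construct $(Z,\phi)$ locally on an étale cover where the gerbe is neutral, and then glue by descent, using the uniqueness of the centre as a functorial construction to guarantee the gluing data is canonical. First I would choose an étale cover $U \to X$ such that $\XX_U \cong B_U G$ for some affine $U$-group scheme $G$ (possible since $\XX \to X$ is an étale gerbe, after refining $U$ so that the band is represented by an honest group scheme and the gerbe is trivialized). On $U$, the inertia $\In(\XX_U)$ is the quotient stack $[G/G]$ for the conjugation action, and I would set $Z_U \subset G$ to be the centre of $G$; by Corollary \ref{cor:center-exists-after-strat} (applied after a Zariski stratification of $U$, which we may absorb into the choice of cover since we only need an étale cover) the functorial centre $Z_G(G)$ is represented by a closed subgroup scheme of $G$, and it is commutative by construction. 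The map $\phi_U \colon Z_U \times_U \XX_U \to \In(\XX_U)$ is then the obvious inclusion $[Z_U/G] \hookrightarrow [G/G]$ (note $G$ acts trivially on $Z_U$ by conjugation, so $[Z_U/G] = Z_U \times_U B_UG = Z_U\times_U \XX_U$ as a sheaf of groups over $\XX_U$); property (1) is immediate on $U$ because for any $s \colon S \to \XX_U = B_UG$ pulled back from a $G$-torsor, $\uAut(s)$ is the inner form of $G$ twisted by that torsor, and the centre of an inner twist is canonically the (untwisted, because conjugation acts trivially on it) centre $Z_U$.

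The key point for gluing is that the centre is insensitive to the choice of trivialization: if $U' = U \times_X U$ with two maps to $U$, the two pullbacks of $(Z_U, \phi_U)$ to $U'$ both satisfy property (1) with respect to the same gerbe $\XX_{U'}$, so I would first establish the uniqueness statement (2) over an arbitrary base where such a pair exists, and then invoke it over $U'$ to get a canonical isomorphism between the two pullbacks, check the cocycle condition over $U \times_X U \times_X U$ (again automatic by uniqueness, since all pairs there satisfy (1)), and descend $Z_U$ to a sheaf $Z \to X$ and $\phi_U$ to $\phi \colon Z \times_X \XX \to \In\XX$. Faithfully flat (here étale) descent applies since $Z_U$ is affine over $U$, so $Z$ is an fppf sheaf of abelian groups over $X$; property (1) for $(Z,\phi)$ then follows from property (1) for $(Z_U,\phi_U)$ by descent, because any $s \colon S \to \XX$ can be tested after the étale base change $S \times_X U \to S$.

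For the uniqueness statement (2): given two pairs $(Z,\phi)$ and $(Z',\phi')$ satisfying (1), I would produce the comparison isomorphism by noting that $\phi$ and $\phi'$ both factor the identity of $\In\XX$ through its "central part", which is intrinsic — concretely, working étale-locally where $\XX = B_UG$, property (1) forces both $Z|_U$ and $Z'|_U$ to map isomorphically onto the closed subsheaf $Z_G(G) \subseteq G = \uAut(\text{universal object})$ compatibly with $\phi,\phi'$, giving a canonical isomorphism $Z|_U \cong Z'|_U$ over $\XX_U$; uniqueness of this local comparison (it is the unique isomorphism commuting with the two identifications with the centre) makes it automatically descend, yielding $Z \cong Z'$ over $X$ compatibly with $\phi, \phi'$. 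I expect the main obstacle to be the bookkeeping in the descent step — specifically verifying that $\phi$ descends as a morphism of \emph{sheaves of groups over $\XX$} (not merely over $X$) and that property (1), which is a statement about arbitrary test objects $s \colon S \to \XX$, is correctly captured by the étale-local construction; this is where one must be careful that $\uAut(s)$ for a general $S$-point is an inner form of $G$ and that conjugation acts trivially on the centre, so that the twisting does not affect $Z$. Everything else is routine representability (Corollary \ref{cor:center-exists-after-strat}) and affine descent.
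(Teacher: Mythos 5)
Your overall strategy is sound, and it is essentially a hands-on version of what the paper handles by citation: the paper's own ``proof'' simply refers to Giraud \cite[Ch.\ IV, \S 1.5]{Giraud}, where the centre of the band of a gerbe is constructed, and Giraud's construction rests on exactly the observation you make — the automorphism sheaves of local sections are well defined up to \emph{inner} automorphism only, but their centres are untouched by inner twisting, so they glue canonically. Your way of organizing the gluing (prove uniqueness (2) first; observe that the comparison isomorphism compatible with $\phi$ and $\phi'$ is \emph{unique}, because $\phi'$ is a monomorphism onto the centre and $\XX\to X$ is an epimorphism, so the cocycle condition over the triple fibre product is automatic) is correct and yields a self-contained argument where the paper gives none. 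So as a route it is genuinely different in presentation, though identical in underlying idea, to the cited theory of bands.

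One step as written is wrong, although it is a repairable detour rather than a fatal gap: you cannot ``absorb a Zariski stratification into the choice of étale cover''. The strata are locally closed subschemes, their disjoint union is not an étale covering of $U$, and Corollary \ref{cor:center-exists-after-strat} in any case assumes $G$ smooth over an \emph{integral} base and only produces a representable centre over a nonempty open, so it cannot give you a closed subgroup scheme $Z_U\subseteq G$ over all of $U$. Fortunately, representability is not needed for this proposition: the statement only asks for a \emph{sheaf} of abelian groups $Z\to X$. So take $Z_U$ to be the sheaf-theoretic (functorial) centre of $G$, which always exists as a subsheaf of groups and whose formation commutes with base change, and descend it as a sheaf — descent for sheaves on the étale site is automatic and requires no affineness, so your appeal to affine descent can simply be dropped. (Representability of the central band is imposed only later, in the definition of clear gerbes, and that is where Corollary \ref{cor:center-exists-after-strat} is actually used, after the shrinking it requires.) With that correction your argument goes through.
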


\begin{proof}
This is explained in \cite[Ch. IV, \S 1.5]{Giraud}, specifically refer to \cite[Ch. IV, \S 1.5.3.2]{Giraud} for existence of the sheaf and to \cite[Ch. IV, Cor. 1.5.5]{Giraud} for the properties of it.
\end{proof}
In the above setting, $Z$ is called the \emph{central band} associated to $\XX$ and if it is a scheme we call it the \emph{central group scheme}. The \emph{central inertia} of $\XX$ is defined to be the fiber product 
$$\diag{\cartesian \ZI \XX \ar[r] \ar[d] & \XX \ar[d] \\ Z \ar[r] & X.}$$

\subsubsection*{Discrete central inertia} 

Suppose we are in the case that $Z \to X$ is a group scheme and consider the open subgroup scheme $Z^0$ and the quotient algebraic space $Z / Z^0$ over $X$. 
Pulling back to $\XX$, we define the \emph{connected component of the central inertia}, $\ZoI \XX$ as the sub-group space, and the \emph{discrete central inertia} as the quotient group space, which are respectively given by the following fiber products
$$\vcenter{\diag{ 
\cartesian \ZoI \XX \ar[r] \ar[d] & \XX \ar[d] \\ Z^0 \ar[r] & X}} \qquad \quad \text{and} \qquad \quad \vcenter{\diag{ \cartesian
\DZI \XX \ar[r] \ar[d] & \XX \ar[d] \\ Z / Z^0 \ar[r] & X.}} $$
Here $Z/ Z^0 \to X$ is called the \emph{discrete central band} of $\XX \to X$. 

\subsection{Clear gerbes}
\label{sec:Grothendieck-ring-Art-stacks}

\begin{defn}\label{dfn:preclear gerbe} Let $\XX$ be an algebraic stack of finite type over the associated coarse moduli space $X$. We say $\XX$ is a {\it clear gerbe} over the algebraic space $X$, if all the following conditions are satisfied.  
\begin{enumerate}[label=(\arabic{*})]
\itemsep-2pt
\item $\XX \to X$ is \'etale gerbe with faithfully flat structure morphism of finite type; 
\item the projection $\In \XX \to \XX$ is a representable smooth morphism of finite type; 
\item $X$ is a $k$-variety (i.e. a reduced, separated, $k$-scheme of finite type);
\item the discrete inertia $\D \XX \to \XX$ is finite \'etale morphism;
\item the central band is a smooth commutative $X$-group scheme;
\item the central inertia is a closed substack of the inertia stack; 
\item the discrete central band is an \'etale finite $X$-group scheme;
\item the central band admits a maximal torus.
\end{enumerate}
\end{defn}
We start with a modification of the stratification in \cite[Thm. 11.5]{LMB}.

\begin{thm}\label{thm:S-stratification} Every algebraic stack of finite type with affine diagonal has a stratification into a disjoint union of finitely many locally closed clear gerbes.
\end{thm}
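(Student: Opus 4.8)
The plan is to argue by noetherian induction on $\XX$. It suffices to prove that every nonempty algebraic stack of finite type with affine diagonal contains a nonempty \emph{open} substack which is a clear gerbe over its coarse moduli space: granting this, the closed complement is again a noetherian algebraic stack of finite type with affine diagonal, and since closed substacks of a noetherian stack satisfy the descending chain condition the process terminates after finitely many steps, the union of the resulting locally closed clear gerbes being all of $\XX$. Replacing $\XX$ by $\XX_\red$ (which does not change the underlying topological space, and is harmless since every clear gerbe is automatically reduced, its coarse space being a variety and its structure morphism flat with reduced fibres) we may assume $\XX$ reduced; and as a reduced noetherian stack has finitely many irreducible components, we may treat each separately and assume $\XX$ integral.

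First I would pass to the coarse moduli sheaf $X$ of $\XX$. By \cite[Rmk. 3.19]{LMB} (compare the stratification of \cite[Thm. 11.5]{LMB}) the morphism $\XX\to X$ is an fppf gerbe with faithfully flat structure morphism of finite type, and since in characteristic zero all automorphism group schemes are smooth it is in fact an \'etale gerbe; this is condition (1). As $X$ is an algebraic space of finite type over $k$ and $\XX$ is reduced, $X$ is generically a scheme, so shrinking $X$ to a dense open we may assume it is an integral $k$-variety, giving condition (3). Every further shrinking of $X$ below will be to a dense open and will pull back to a dense open substack of $\XX$ whose coarse space is the corresponding open of $X$, so only finitely many such shrinkings are required.

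Next I would install the remaining conditions by successive shrinking. For (2): $\In\XX\to\XX$ is affine, hence representable, of finite type (the diagonal of $\XX$ is affine), and its fibres are the smooth group schemes $\uAut(s)$; applying generic flatness on a smooth atlas of $\XX$ makes it flat, hence smooth, after shrinking. For (4): apply Corollary \ref{cor:closed-conn-comp-unity} to the affine smooth group scheme $\In\XX\to\XX$ so that $(\In\XX)^0$ becomes open and closed with $\D\XX=\In\XX/(\In\XX)^0$ finite \'etale. For (5) and (6): by Proposition \ref{prop:central-inertia} the central band $Z\to X$ exists as a sheaf of abelian groups; applying Corollary \ref{cor:center-exists-after-strat} we may assume the centre is represented by a closed subscheme, so that $Z$ is a smooth commutative $X$-group scheme and $\ZI\XX$ is closed in $\In\XX$. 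For (7): apply Lemma \ref{lem:closed-conn-comp-unity} to $Z$ to make $Z^0$ closed with discrete central band $Z/Z^0$ finite \'etale. For (8): apply Proposition \ref{spread-out-maximal-torus} (or Proposition \ref{cor:commutative-structure-spread-out}) to the commutative group scheme $Z$ to arrange that it admits a maximal torus. After these finitely many shrinkings the resulting open substack satisfies (1)--(8).

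The main obstacle is one of descent and bookkeeping: the structural results of \S\ref{ch:groups} are stated for group schemes over a base \emph{scheme}, whereas $\In\XX$ and $\ZI\XX$ live over the stack $\XX$. The point is that each stratification they produce is canonical and commutes with base change (formation of the fibrewise connected component of the identity, of the centre, and of maximal tori all do), so the "good locus" it cuts out is a condition that is \'etale-local on $\XX$; one therefore runs the argument on a smooth atlas $U\to\XX$ and descends the resulting open, or — for the conditions concerning only the central band — works directly over $X$. One must also observe that imposing a condition on a smaller open does not destroy conditions already achieved (all the relevant properties are stable under passage to an open), and that $\XX\to X$ restricts to the coarse-space morphism of each open substack, so that the notion of "clear gerbe over its coarse space" is preserved throughout.
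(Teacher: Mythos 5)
Your proposal is correct and follows essentially the same route as the paper: arrange conditions (1)--(3) via the coarse moduli sheaf and generic flatness/smoothness (the paper does this by modifying the proof of \cite[Thm.\ 11.5]{LMB} and stratifying the coarse space into varieties via \cite[Thm.\ 3.1.1]{CLO}), then install (4)--(8) by shrinking, using Corollaries \ref{cor:closed-conn-comp-unity} and \ref{cor:center-exists-after-strat} and the spreading-out of the maximal torus/commutative structure from \S\ref{ch:groups}. The only cosmetic differences are that the paper works on a finite \'etale cover $\tilde X\to X$ trivializing the gerbe to $B_{\tilde X}G$ (rather than a smooth atlas of $\XX$) to apply the scheme-based results of \S\ref{ch:groups} and then descends, and it quotes the cited stratification theorems where you use noetherian induction and the generic schematic locus of an algebraic space; your descent/bookkeeping remarks cover the same points at the same level of detail as the paper's proof.
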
 
\begin{proof} 
By replacing generic smoothness instead of generic flatness in the proof of \cite[Thm. 11.5]{LMB} we may assume (C1) and (C2) are already satisfied and the coarse moduli sheaf $X$, is a noetherian algebraic space of finite type and in particular quasi-compact and quasi-separated. Now we stratify $X$ into $k$-varieties by \cite[Thm. 3.1.1]{CLO}.
It now suffices to show that there exists a Zariski open $U \subseteq X$ such that $\XX|_U$ satisfies conditions (4)--(8). 

Let $\tilde X \to X$ be an \'etale cover of $X$ trivializing $\XX$ to $B_{\tilde X} G$ for a $\tilde X$-group scheme $G$. 
By generic smoothness we may shrink $X$ such that $G$ is smooth over $\tilde X$ and consequently (\cite[Cor. 17.7.3]{EGA}), over $X$. 
By Corollary \ref{cor:closed-conn-comp-unity} we may now assume that (4) is satisfied. 

Let $Z$ be the central band of $\XX \to X$. It is easy to check that the diagonal morphism $Z \to Z \times_X Z$ is representable. 
By Corollary \ref{cor:center-exists-after-strat} we may further assume that $G$ admits its centre $Z(G)$ as a closed subscheme. Then $Z(G)$ is an \'etale cover of the central band and therefore the latter is an algebraic space. Since all stabilizers are affine, we conclude that $Z$ is a scheme. This takes care of (5). By descent of closed immersions (6) is also satisfied. Applying Corollary \ref{cor:closed-conn-comp-unity}, this time to $Z \to X$, will assure (7). 

Recall that a smooth commutative algebraic group over a perfect field has a decomposition $T \times U$ where $U$ is the unipotent radical of the algebraic group and $T$ is a group of multiplicative type. If the group is connected then so is $T$, in which case $T$ is a torus \cite[XIV, Theorem 2.6]{AGS}. Let $\eta$ be a generic point of $X$. Then the mentioned decomposition holds for the generic fiber $Z_\eta$ of central band over $X$. We now use the machinery of \S \ref{sec:max-tori} on how the structure of a commutative algebraic groups spread out by Corollary \ref{cor:commutative-structure-spread-out}.
\end{proof}

\newcommand{\corank}{\mr{ corank} } 

\begin{defn} Let $\XX \to X$ be a clear gerbe. The relative dimensions $\dim_\XX (\In \XX)$ and $\dim_\XX (\ZI \XX)$ are well-defined. The former is called the \emph{total rank}. The latter is called the \emph{central rank} of $\XX$, $\rho(\XX)$, and is bounded above by the former. It is also the rank of the associated commutative group scheme $Z^0 \to X$. 

The \emph{split central degree} is defined to be the number of sections of $\DZI \XX \to \XX$ and is denoted by $\nu (\XX)$. This is bounded above by the \emph{total degree}, defined as the degree of finite \'etale map $\D \XX \to \XX$.

The unipotent and reductive ranks of the central band are constant over the coarse moduli space. We will denote them respectively by $\rho_u (\XX)$ and $\rho_r (\XX)$. \end{defn}

\subsection{Filtration by central rank and split central degree}\label{sec:inertia-endo-Art}  

Let $K_{\geq (r, n)}$ be the subspace of $K(\St)$ spanned by clear gerbes $\XX \to X$, for which the central rank, is at least $r$ and if this rank is exactly $r$ then the split central order is at least $n$. We will now show that the inertia preserves this filtration. First we need a lemma. If $u: G \to H$ is any homomorphism of group schemes, it follows from the definition of the functor of connectedness component of identity \cite[Exp. VI, Def. 3.1]{SGA3} that $G^0$
maps to $H^0$. Moreover we have:

\begin{lem}\label{lem:induced-map-of-components-of-identity} Let $u: G \to H$ be a closed immersion of smooth group schemes of same dimensions over a field $k$, then $u^0: G^0 \to H^0$ is an isomorphism of connected group schemes and the induced quotient map $G/G^0 \to H/H^0$ is a closed immersion of finite group schemes.
\end{lem}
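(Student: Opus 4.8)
The plan is to establish the two assertions separately, using the standard fact (recalled above for $G^0$) that over a field the identity component of a group scheme of finite type is a geometrically integral, open-and-closed subgroup scheme of finite index, with quotient a finite \'etale $k$-group scheme; this applies to both $G$ and $H$, which we take of finite type so that $G/G^0$ and $H/H^0$ are finite.

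First I would show that $u^0\colon G^0\to H^0$ is an isomorphism. Since $G^0$ and $H^0$ are open and closed in $G$ and $H$, and $u$ carries $G^0$ into $H^0$ (being a homomorphism), $u$ restricts to a closed immersion $u^0\colon G^0\to H^0$. Smoothness of $G$ and $H$ over $k$ makes $G^0$ and $H^0$ integral, and by hypothesis $\dim G^0=\dim G=\dim H=\dim H^0$. A closed immersion of integral finite-type $k$-schemes of equal dimension is surjective on underlying spaces (a proper closed subset of an irreducible finite-type $k$-scheme has strictly smaller dimension), and a surjective closed immersion with reduced target is an isomorphism; hence $u^0$ is an isomorphism of connected group schemes.

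Next I would analyze $\bar u\colon G/G^0\to H/H^0$ via the scheme-theoretic preimage $u^{-1}(H^0)=G\times_H H^0$. As $H^0$ is open in $H$, this is an open subscheme of $G$; it contains $G^0$, and since $u$ is a monomorphism with $u(G^0)=H^0$ (by the previous step), its support is exactly $G^0$, so $u^{-1}(H^0)=G^0$ as subschemes of $G$. Consequently the kernel $u^{-1}(H^0)/G^0$ of $\bar u$ is trivial, i.e.\ $\bar u$ is a monomorphism of group schemes; as $G/G^0$ and $H/H^0$ are finite over $k$, $\bar u$ is proper, and a proper monomorphism is a closed immersion. (Equivalently, a $\Gal(\bar k/k)$-equivariant injection of finite \'etale $k$-group schemes exhibits the source as a $\Gal$-stable subset of the target, whose inclusion is a closed immersion.)

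The one genuinely delicate point is the passage from the set-theoretic identity $u^{-1}(H^0)=G^0$ to the scheme-theoretic one: this uses that $H^0$ is \emph{open} (not merely closed) in $H$, so that the preimage is an open, hence reduced, subscheme of $G$ and is therefore determined by its support. Everything else is routine bookkeeping with component groups.
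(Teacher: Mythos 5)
Your proof is correct. For the claim that $u^0\colon G^0\to H^0$ is an isomorphism you argue exactly as the paper does: integrality of the identity components (from smoothness) together with equality of dimensions forces the closed immersion $G^0\to H^0$ to be surjective, and a surjective closed immersion onto a reduced scheme is an isomorphism. The paper's proof in fact stops there and gives no argument for the second claim, so your treatment of $\bar u\colon G/G^0\to H/H^0$ fills a genuine gap: identifying the scheme-theoretic preimage $u^{-1}(H^0)=G\times_H H^0$ with $G^0$ (using that $H^0$ is open and that $u$ is injective with $u(G^0)=H^0$), deducing that $\bar u$ has trivial kernel and is therefore a monomorphism of group schemes, and then concluding from properness of the finite $k$-group $G/G^0$ that the proper monomorphism $\bar u$ is a closed immersion, is a correct and complete argument.
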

\begin{proof} From the fact that $G^0$ and $H^0$ are irreducible (\cite[Tag 0B7Q]{SP}) we conclude that $G^0 \to H^0$ is a surjective closed immersion. Being a surjective closed immersions is equivalent to being a thickening, but both $G^0$ and $H^0$ are reduced. Therefore $G^0 \to H^0$ is an isomorphism. 
\end{proof}

Let $\XX$ is a clear gerbe. We fix a stratification of $\In \XX$ by clear gerbes and let $\YY$ be one such a stratum. We prove that if $\YY$ is not contained in $\ZI \XX$ then either $\rho(\YY) > \rho(\XX)$ or $\nu(\YY) > \nu(\XX)$. And if $\YY$ is contained in $\ZI \XX$, and maps to a component of 
$\DZI\XX$, {\em not} of degree 1 over $\XX$, then $\rho(\XX)= \rho(\YY)$ but $\nu(\YY) > \nu(\XX)$.

\begin{prop}
\label{prop:descending-noncentral-stratum} 
Let $\YY$ be a stratum of $\In \XX$ not (completely) contained in $\ZI \XX$. 
Then $\rho(\YY) \geq \rho(\XX)$ and if $\rho(\YY) = \rho(\XX)$ then $\nu(\YY) > \nu(\XX)$. 
\end{prop}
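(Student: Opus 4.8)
The plan is to analyze a single stratum $\YY \subseteq \In \XX$ that is not contained in $\ZI \XX$ by working fiberwise. Fix an object $x$ of $\XX$ over a geometric point, with automorphism group scheme $G = \uAut(x)$; then the fiber of $\In \XX$ over $x$ is the group scheme $G$ itself (acting on $x$ by conjugation), and a stratum $\YY$ through the pair $(x,\phi)$ has automorphism group $\uAut(x,\phi) = Z_G(\phi)$, the centralizer. The central band of $\YY$ is governed by the centre $Z(Z_G(\phi))$, while the central band of $\XX$ is $Z(G)$. Since $\phi$ is not central (this is exactly what ``$\YY$ not contained in $\ZI \XX$'' means, at least generically on $\YY$), we have the two inclusions of group schemes over the coarse space:
$$Z(G) \subseteq Z(Z_G(\phi)) \subseteq Z_G(\phi) \subseteq G.$$
First I would establish that $Z(G) \hookrightarrow Z(Z_G(\phi))$ is a closed immersion of smooth commutative group schemes — this uses the clear-gerbe conditions (5)--(8) and the spreading-out/stratification results of \S\ref{ch:groups} (Corollary \ref{cor:center-exists-after-strat}, Proposition \ref{cor:commutative-structure-spread-out}) applied to $Z_G(\phi)$, possibly after shrinking; one also needs that $\phi$ lies in $Z_G(\phi)$ so $Z(Z_G(\phi))$ genuinely contains the ``old'' centre.

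Next, the rank statement $\rho(\YY) \geq \rho(\XX)$ is immediate from this closed immersion: $\rho(\XX)$ is the rank of $Z(G)^0$ and $\rho(\YY)$ is the rank of $Z(Z_G(\phi))^0$, and a closed immersion of commutative group schemes induces an injection on maximal tori of identity components, so the reductive rank cannot drop; in fact $\rho_u$ and $\rho_r$ both weakly increase. The interesting case is equality $\rho(\YY) = \rho(\XX)$. Here I would invoke Lemma \ref{lem:induced-map-of-components-of-identity}: if the central ranks agree and (after shrinking) the identity components have the same dimension, then $Z(G)^0 \to Z(Z_G(\phi))^0$ is an isomorphism, and the induced map on discrete central bands $Z(G)/Z(G)^0 \to Z(Z_G(\phi))/Z(Z_G(\phi))^0$ is a closed immersion of finite étale group schemes. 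Pulling this back along $\YY \to \XX$ and passing to sections (using the section-counting principle from \cite[Cor. 17.9.3]{EGA} recalled before Definition \ref{def:DM-untwistedness}), every section of $\DZI \XX$ restricts to a section of $\DZI \YY$. To get the strict inequality $\nu(\YY) > \nu(\XX)$ I would exhibit one extra section: the image of $\phi$ itself in $Z(Z_G(\phi))$ defines a section of $\ZI \YY \to \YY$ (the ``tautological'' central automorphism), hence a section of $\DZI \YY \to \YY$, and because $\phi$ is noncentral in $G$ this section does not come from a section of $\DZI \XX$. Care is needed to check this new section is genuinely distinct from the pulled-back ones and is defined over all of $\YY$ rather than just generically — this is where the reduction to $\YY$ being a clear gerbe, on which $\D$ and $\DZI$ behave well, does the work.

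The main obstacle I anticipate is the bookkeeping around ``generically'' versus ``everywhere'': the hypothesis is that $\YY$ is \emph{not completely} contained in $\ZI \XX$, so a priori $\phi$ could be central over part of $\YY$. Since $\YY$ is irreducible (it is a clear gerbe, hence connected) and $\ZI \XX$ is closed in $\In \XX$ (condition (6)), $\YY \cap \ZI \XX$ is a proper closed substack, so $\phi$ is noncentral on a dense open; one then argues that the central band, discrete central band, and the counts $\rho, \nu$ are computed on this dense open (they were defined via conditions that are ``generic'' in nature and stable under shrinking), so no information is lost. A secondary subtlety is ensuring the inclusion $Z(Z_G(\phi)) \hookrightarrow G$ actually restricts to give the claimed inclusion of \emph{central} bands rather than merely of inertia substacks — i.e. that centre-taking is compatible with the gerbe-to-sheaf passage — but this follows from the uniqueness and functoriality in Proposition \ref{prop:central-inertia} together with the fact that, over an étale cover trivializing both $\XX$ and $\YY$, everything reduces to the constant-group-scheme statement $Z(G) \subseteq Z(Z_G(\phi))$ which is elementary group theory.
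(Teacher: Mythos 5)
Your proposal is correct and follows essentially the same route as the paper: the closed immersion $Z(\Aut(x)) \hookrightarrow Z(Z_{\Aut(x)}(\phi))$ (Conrad's Lemma 2.2.4) gives $\rho(\YY)\geq\rho(\XX)$, Lemma \ref{lem:induced-map-of-components-of-identity} handles the equal-rank case at the level of discrete central bands, and the tautological section $(x,\phi)\mapsto(x,\phi,[\phi])$ supplies the extra section not induced from $\DZI\XX$. The only difference is cosmetic: that section is defined on all of $\YY$, and noncentrality of $\phi$ at a single point already prevents it from factoring through the image of the pulled-back sections (using $Z(Z_{\Aut(x)}(\phi))^0=Z(\Aut(x))^0$ in the equal-rank case), so your generic-versus-everywhere reduction is not actually needed.
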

\begin{proof} Consider the diagram below, where over every object $(x, \phi\in \Aut(x))$ in $\YY$, $j$ maps $Z(\Aut(x))$ to $Z(Z_{\Aut(x)} (\phi))$. By \cite[Lem. 2.2.4]{Conrad}, $j$ is a closed immersion of group objects on $\YY$.
\begin{equation}\label{diag:central-inertia-of-inertia} 
\vcenter{\xymatrix{
\ZI \YY \drto_{\pi_3} & \YY \times_\XX \ZI\XX \cartesian\rto \dto^{\pi_2}\ar@{_{(}->}_-j[l]&
  \ZI\XX\dto^{\pi_1}\\
&\YY \rto &\XX\,}}
\end{equation}
From this diagram, it is obvious that $\rho(\YY) \geq \rho(\XX)$. 

If $\rho(\YY) = \rho(\XX)$, by Lemma \ref{lem:induced-map-of-components-of-identity} and the fact that the formation of the connectedness component of identity and the quotient by it, are preserved by base change, we have another commutative diagram:
\begin{equation}\label{diag:whtd-discrete}
\vcenter{\xymatrix{
\DZI\YY\drto_{\pi_3} &\ (\DZI \XX)_\YY \ar@{_{(}->}[l]_-{\bar j}\dto^{\pi_2}\rto  & \DZI\XX\dto^{\pi_1}\\
&\YY\rto & \XX}}
\end{equation}
The morphism $\pi_3$ has a canonical section induced by the diagonal morphism $\YY \to \YY \times_\XX\YY$ (explicitly $(x, \phi) \mapsto (x, \phi, [\phi])$, where $x$ is an object of $\XX$, $\phi\in \Aut (x)$ and $[\phi]$ is the orbit of $\phi$ by the action of connectedness component of unity).  
Since $\YY$ is not contained in $\ZI\XX$, this section is not in the image of $\bar j$. Therefore $\nu(\YY) > \nu(\XX)$.
\end{proof}

When $\XX$ is a clear gerbe, the connectedness components of the central inertia $\ZI \XX \to \XX$ are also clear gerbes; this yields a canonical stratification of the central inertia by clears gerbes. The next two propositions pertain to these strata. 

\begin{prop}\label{prop:central-components-are-torsors}  The connectedness components of the central inertia $\ZI \XX \to \XX$ satisfy the following properties: 
(1) they all have the same central rank as that of $\XX$;
and (2) there is a one-to-one correspondence between connectedness components of $\ZI \XX$ and that of $\DZI \XX$ and each compotent $\YY \subseteq \ZI \XX$ is a $(\ZoI\XX)$-torsor over its associated connected component $\YY' \subseteq \DZI \XX$.
\end{prop}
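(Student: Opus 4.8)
The plan is to work locally on the coarse moduli space $X$ and transport everything to the central band $Z \to X$, where the statement becomes a transparent assertion about a smooth commutative group scheme and its connected component of identity. First I would recall from Proposition \ref{prop:central-inertia} and the definition of central inertia that $\ZI \XX$ is the pullback of $Z \to X$ along $\XX \to X$, and similarly $\ZoI \XX$ is the pullback of $Z^0$ and $\DZI \XX$ the pullback of $Z/Z^0$. Since $\XX \to X$ is a faithfully flat gerbe, pullback along it is an equivalence onto its image that preserves and reflects the property of being a connected component (the fibers of $\XX \to X$ are connected, being gerbes over points), so it suffices to prove the analogous statements for the group scheme $Z \to X$ itself: namely that each connected component of $Z$ has the same relative dimension over $X$ as $Z^0$, that connected components of $Z$ and of $Z/Z^0$ are in canonical bijection, and that each component $W \subseteq Z$ is a $Z^0$-torsor over its image in $Z/Z^0$.

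For step (1), the clear-gerbe hypotheses give that $Z \to X$ is smooth with $Z^0$ closed and open (condition (C5), together with Lemma \ref{lem:closed-conn-comp-unity} and Corollary \ref{cor:closed-conn-comp-unity}) and $Z/Z^0 \to X$ finite \'etale (condition (C7)); so $Z \to Z/Z^0$ is a $Z^0$-torsor by construction of the quotient, and in particular $Z \to Z/Z^0$ is smooth surjective with connected fibers isomorphic to $Z^0$. Hence every connected component of $Z$ maps onto a connected component of $Z/Z^0$ with fiber a torsor under $Z^0$, which is connected of relative dimension $\rho(\XX)$ over $X$; since $Z/Z^0 \to X$ is \'etale (relative dimension zero), each component of $Z$ has relative dimension exactly $\rho(\XX)$. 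This proves (1) and simultaneously gives the torsor assertion of (2) once we match up components. For the bijection in (2): the components of $Z/Z^0$ over the (irreducible) base $X$ are in bijection with its sections after passing to a connected \'etale cover trivializing it, and because $Z \to Z/Z^0$ is a Zariski-locally-trivial torsor (an fppf torsor under a smooth group scheme, hence smooth, and one can pull back a section of $Z/Z^0$ and lift it after further \'etale localization), the preimage of each component of $Z/Z^0$ is a single connected component of $Z$ — connectedness of the preimage follows from the connectedness of the base component together with the connectedness of the torsor fibers $Z^0$. Reading all of this back through $\XX \to X$ yields the statement for $\ZI \XX$, $\ZoI \XX$, $\DZI \XX$.

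The main obstacle I anticipate is not the group-scheme bookkeeping but making the descent along the gerbe $\XX \to X$ precise: one must check that ``connected component'' is preserved in both directions under pullback along $\XX \to X$, and that being a torsor under $\ZoI \XX$ over $\YY'$ descends/ascends correctly. This is where I would invoke that $\XX \to X$ is an \'etale gerbe with connected fibers, so that $\pi_0(\XX) = \pi_0(X)$ compatibly with $\pi_0$ of any pullback, and that torsor structures pull back along any morphism. Everything else — closedness and openness of $Z^0$, finiteness and \'etaleness of $Z/Z^0$, the torsor property of $Z \to Z/Z^0$, constancy of the relative dimension — is already packaged into the definition of a clear gerbe (conditions (C4)--(C7)) and the results of \S\ref{ch:groups}, so those steps should be routine verifications rather than genuine difficulties.
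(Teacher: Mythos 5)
Your part (2) is essentially the paper's argument: the paper observes directly that $\ZI\XX \to \DZI\XX$ is a principal bundle for the connected group scheme $\ZoI\XX$ and deduces the bijection on connected components and the torsor structure; you run the same argument one level down, for $Z \to Z/Z^0$ over $X$, and pull back along the gerbe $\XX \to X$, which is consistent with Remark \ref{rmk:torsor-descends} and is fine.

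Part (1), however, has a genuine gap. The claim is not that a component $\YY \subseteq \ZI\XX$ has relative dimension $\rho(\XX)$ over $\XX$ (equivalently, that its descent $Y \subseteq Z$ has relative dimension $\rho(\XX)$ over $X$); it is that $\YY$, regarded as a clear gerbe over its own coarse space $Y$, has central rank $\rho(\YY) = \dim_\YY(\ZI\YY)$ equal to $\rho(\XX)$, i.e.\ that the central band of $\YY \to Y$ has the same rank as $Z$. Your reduction ``it suffices to prove that each connected component of $Z$ has the same relative dimension over $X$ as $Z^0$'' replaces the central rank of $\YY$ by the dimension of $\YY$ itself, which is a different quantity, so your torsor-fiber dimension count proves the wrong statement. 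The missing ingredient is the identification of the central inertia of $\YY$: since the objects of $\ZI\XX$ are pairs $(x,\phi)$ with $\phi$ central in $\Aut(x)$, every automorphism of $x$ commutes with $\phi$, so $\Aut_{\ZI\XX}(x,\phi) = \Aut_\XX(x)$; this is the paper's isomorphism $\ZI(\ZI\XX) \cong \ZI\XX \times_\XX \ZI\XX$, which shows that the central band of a component $\YY$ descending to $Y \subseteq Z$ is exactly the pullback $Z|_Y$, and hence $\rho(\YY) = \rho(\XX)$. Once you insert this identification, the rest of your local analysis on $Z$ goes through.
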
 

\begin{proof} 
It is easy to verify the isomorphism $\ZI (\ZI \XX) \cong \ZI \XX \times_{\XX} \ZI \XX$ of stacks which also fit in the commutative diagram  $$\diag{\cartesian \ZI (\ZI \XX) \ar[r] \ar[d] & \ZI \XX \ar[d] \\ Z \times_X Z \ar[r] & Z}$$ Thus for any locally closed substack $\YY$ is $\ZI \XX$ which descends to a locally closed subspace $Y$ of $Z$, we have \begin{equation} \label{diag:inertia-of-commutative-gerbe} \vcenter{\diag{\cartesian \ZI (\YY) \ar[r] \ar[d] & \YY \ar[d] \\  Z \times_X Y \ar[r] & Y.}} \end{equation}In particular the central $Y$-group scheme associated to $\YY$ is the pull-back $Z|_Y$. 
(1) is now obvious form diagram \ref{diag:inertia-of-commutative-gerbe}.
For (2) notice that the morphism  
$\ZI\XX\longrightarrow \DZI\XX$
is a principal bundle for the connected group scheme $\ZoI\XX$
over $\XX$, and therefore there is a bijection between connectedness components of the source
and the target of this morphism. Passing to a component gives us the cartesian diagram 
$$\diag{
\YY \ar[r] \ar@{^(->}[d] & \YY' \ar@{^(->}[d] \\ \ZI \XX \ar[r] & \DZI \XX} $$
together with a finite \'etale mapping $\YY' \to \XX$ proving the lemma. 
\end{proof}

\begin{prop}\label{prop:descending-central-stratum} Let $\YY$ be a connected component of $\ZI \XX$. We always have $\nu(\YY) \geq \nu(\XX)$, with equality happening if and only if the image of $\YY$ in $\DZI \XX$ maps down isomorphically to $\XX$. \end{prop}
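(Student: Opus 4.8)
The plan is to use the structural description of the central inertia provided by Proposition \ref{prop:central-components-are-torsors}: if $\YY \subseteq \ZI \XX$ is a connected component, then $\YY$ is a $(\ZoI\XX)$-torsor over its image $\YY' \subseteq \DZI\XX$, and $\YY' \to \XX$ is finite \'etale. The key observation is that the split central degree $\nu$ counts sections of the discrete central inertia, so I should relate the sections of $\DZI\YY \to \YY$ to those of $\DZI\XX \to \XX$, and for this I will exploit the analogue of diagram \eqref{diag:whtd-discrete} in the central case. Since $\YY$ is now \emph{completely} contained in $\ZI\XX$, the whole central inertia $\ZI\XX$ restricts over $\YY$ simply as $\ZI\XX|_\YY$, which by diagram \eqref{diag:inertia-of-commutative-gerbe} agrees with the central inertia $\ZI\YY$ itself (the central band of $\YY$ being the pull-back $Z|_{\YY'}$). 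Concretely, I would write down the diagram
$$\diag{
\DZI\YY \drto_{\pi_3} & (\DZI\XX)_\YY \ar@{_{(}->}[l]_-{\bar j}\dto^{\pi_2}\rto & \DZI\XX\dto^{\pi_1}\\
&\YY\rto & \XX}$$
with cartesian right-hand square, where now $\bar j$ is an \emph{isomorphism} rather than a proper closed immersion (this is precisely the difference from the non-central case: when $\YY \subseteq \ZI\XX$, the centralizer $Z_{\Aut(x)}(\phi)$ contains the central torus $Z$ with no extra room at the level of connected components, and in fact $\DZI\YY = (\DZI\XX)_\YY$ since the formation of $Z^0$ and the quotient commute with the base change $\YY \to \XX$).

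Given the isomorphism $\DZI\YY \cong (\DZI\XX)_\YY = \YY \times_\XX \DZI\XX$, sections of $\DZI\YY \to \YY$ correspond exactly to sections of $\pi_2$, hence — since the right square is cartesian with $\YY' \to \XX$ finite \'etale — they are computed by pulling back sections of $\pi_1: \DZI\XX \to \XX$ along $\YY' \to \XX$, plus possibly new sections that appear only after this base change. This immediately gives $\nu(\YY) \geq \nu(\XX)$: every section of $\DZI\XX \to \XX$ pulls back. For the equality statement I would argue as in the proof of Proposition \ref{prop:DM-filtration-preserved}: a finite \'etale cover $\YY' \to \XX$ acquires no new sections of the pulled-back cover $(\DZI\XX)_{\YY'} \to \YY'$ beyond the pull-backs of sections over $\XX$ precisely when $\YY' \to \XX$ is itself (a disjoint union of copies of $\XX$, hence in the connected case) an isomorphism. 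More carefully, by \cite[Cor. 17.9.3]{EGA} the number of sections of a separated \'etale cover over a connected base counts connected components isomorphic to the base; the connected components of $(\DZI\XX)_{\YY'}$ mapping isomorphically to $\YY'$ are exactly the pull-backs of the corresponding components of $\DZI\XX$ over $\XX$ \emph{together with} the component $\YY'$ itself viewed inside $(\DZI\XX)_{\YY'}$ via the diagonal-type section — and this diagonal component maps isomorphically to $\YY'$ but its image in $\DZI\XX$ is $\YY'$, which is a degree-one cover of $\XX$ exactly when $\YY' \to \XX$ is an isomorphism. Hence $\nu(\YY) = \nu(\XX)$ iff $\YY' \to \XX$ is an isomorphism, i.e. iff the image of $\YY$ in $\DZI\XX$ maps down isomorphically to $\XX$.

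The main obstacle I anticipate is the bookkeeping needed to see that $\bar j$ really is an isomorphism (not merely a closed immersion) when $\YY \subseteq \ZI\XX$, and to correctly identify the ``diagonal'' section of $\pi_3: \DZI\YY \to \YY$ with the component $\YY'$ inside $(\DZI\XX)_\YY$ — this is the place where one must be careful that the base change $\YY \to \XX$ is not just along $\XX$ but factors through the finite \'etale $\YY' \to \XX$, and that the connected component structure is governed by $\YY'$. Once that identification is in place, the counting of sections via \cite[Cor. 17.9.3]{EGA} is formal and mirrors the Deligne–Mumford argument already carried out in Proposition \ref{prop:DM-filtration-preserved}. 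The commutation of $(-)^0$ and of the quotient by $(-)^0$ with base change, invoked repeatedly, is available from the preliminaries in \S\ref{ch:groups} and from \cite[Exp. VI(B), Prop. 3.3]{SGA3}.
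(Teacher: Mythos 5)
Your route is the same as the paper's: identify $\ZI\YY$ with $(\ZI\XX)_\YY$ (hence $\DZI\YY$ with $\YY\times_\XX\DZI\XX$), pull back sections of $\bar\pi_1\colon \DZI\XX\to\XX$ to get $\nu(\YY)\geq\nu(\XX)$, and observe that when the image $\YY'\subseteq\DZI\XX$ is not isomorphic to $\XX$, the tautological map $\YY\to\YY'\subseteq\DZI\XX$ gives a section of $\bar\pi_2\colon\DZI\YY\to\YY$ that cannot be a pullback, because its image component in $\DZI\XX$ is $\YY'$ rather than a degree-one component. Up to that point your argument is correct and matches the paper.

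The gap is in the direction $\YY'\cong\XX\Rightarrow\nu(\YY)=\nu(\XX)$. The quantity to bound is the number of sections of $\DZI\YY\to\YY$, i.e.\ sections over $\YY$, but your count is carried out over $\YY'$: you show (correctly, but trivially) that base change along the isomorphism $\YY'\to\XX$ creates no new sections. What is missing is the transfer of the count from $\YY$ to $\YY'$ (equivalently to $\XX$), and this is exactly where connectedness of $\ZoI\XX$ must be used: since $\DZI\YY\to(\DZI\XX)_{\YY'}$ (when $\YY'\cong\XX$, simply $\DZI\YY\to\DZI\XX$) is a torsor under the connected group scheme $\ZoI\XX$, it induces a degree-preserving bijection on connected components of these finite \'etale covers, so degree-one components over $\YY$ correspond to degree-one components over $\XX$; this is the paper's ``push the sections forward'' step. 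You flag this point as anticipated ``bookkeeping'' (``the connected component structure is governed by $\YY'$'') but never supply the argument, and without it the equality case is unproved; since you already quote Proposition \ref{prop:central-components-are-torsors}, the fix is short. A smaller point: your parenthetical claim that the degree-one components of $(\DZI\XX)_{\YY'}$ are \emph{exactly} the pullbacks of degree-one components of $\DZI\XX$ together with the diagonal copy of $\YY'$ is false in general, since a higher-degree component of $\DZI\XX$ may split into degree-one pieces after base change along $\YY'\to\XX$; fortunately the equivalence only needs the three correct facts you also state (pullbacks are sections, the diagonal is new when $\YY'\not\cong\XX$, and base change along an isomorphism adds nothing), so that overstatement is harmless.
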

\begin{proof} By last lemma, $\YY$ sits over a connectedness component $\YY' \subseteq \DZI \XX$. The homomorphism of commutative group schemes $(\ZI \XX)_{\YY} \to \ZI \YY$ over $\YY$
is an isomorphism giving the left cartesian square of homomorphisms of commutative group schemes and inducing the right hand one:
\begin{equation}\label{diag:ics}
\vcenter{\xymatrix{\cartesian
\ZI\YY\rto \dto_{\pi_2} & \ZI\XX\dto^{\pi_1}\\
\YY\rto & \XX}}
\quad\qquad \vcenter{\xymatrix{
\DZI \YY\ar[r] \ar[d]_{\bar\pi_2}\cartesian & \DZI\XX\dto^{\bar\pi_1}\\
\YY \rto & \XX}}
\end{equation}
So distinct sections of $\bar \pi_1$ pull back to distinct sections of $\bar \pi_2$, therefore $\nu(\YY) \geq \nu(\XX)$.

Now suppose $\YY'$ is not isomorphic to $\XX$. Then, as
the structure map $\YY\to\XX$ factors through 
$\YY'$, and yields a section of $\bar\pi_2$ that is not induced by $\bar\pi_1$.
In this case we have $\nu(\YY) > \nu(\XX)$. 
If $\YY'$ maps isomorphically to $\XX$, the
structure map $\YY\to \XX$ is an $\ZoI\XX$-torsor over $\XX$.
Hence, the upper horizontal map in the left hand diagram of (\ref{diag:ics}) is a torsor for
a connected group scheme, and therefore we can push the sections forward. In this case, $\nu(\YY) = \nu(\XX)$.
\end{proof}

\begin{rmk}\label{rmk:torsor-descends} The $\ZoI \XX$-torsors $\YY$ in Proposition \ref{prop:central-components-are-torsors} and \ref{prop:descending-central-stratum} all come from scheme torsors. In fact, the $\ZoI \XX$-principal bundle $\ZI \XX \to \DZI \XX$ is the pull-back of the $Z^0$-principal bundle $Z \to Z/ Z^0$. Passing to a strata $\YY$, we likewise observe that $\YY \to \YY'$ is the pull back of a $Z^0$-torsor. 
\end{rmk} 

\subsubsection{Filtration by total rank and total degree} 
\label{sec:ascending-filtration} 

Let $K^{\leq d}$ be the span of clear gerbes $\XX \to X$, for which the total rank is at most $d$, and $K^{\leq (d, \delta)}$ be the span of clear gerbes $\XX \to X$, for which the total rank is at most $d$ and if it is exactly $d$ then the total degree is at most $\delta$. 

\begin{prop}
\label{prop:ascending-filtration} 
The inertia operator $\In$ preserves $K^{\leq d}$ and $K^{\leq (d, \delta)}$. 
\end{prop}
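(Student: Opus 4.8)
The plan is to track how the total rank and total degree behave under one application of the inertia. Fix a clear gerbe $\XX\to X$ and, by Theorem~\ref{thm:S-stratification}, a stratification of $\In\XX$ into finitely many clear gerbes, so that $\In[\XX]=\sum_\alpha[\YY_\alpha]$ in $K(\St)$. It then suffices to prove, for each stratum $\YY=\YY_\alpha$, that the total rank of $\YY$ is at most the total rank of $\XX$, and that when these two ranks are equal the total degree of $\YY$ is at most the total degree of $\XX$.

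The geometric input is the closed immersion of smooth group spaces over $\YY$
$$\In\YY\ \hookrightarrow\ \YY\times_\XX\In\XX,$$
which comes from the closed immersion $\In(\In\XX)\hookrightarrow\In\XX\times_\XX\In\XX$ obtained by imposing the commutation condition---over an object $(x,\phi)$ of $\In\XX$ the automorphism group is the centralizer $Z_{\Aut(x)}(\phi)$, a closed subgroup scheme of $\Aut(x)$---followed by restriction to the substack $\YY\subseteq\In\XX$, which identifies $\In(\In\XX)|_\YY$ with $\In\YY$. Since $\YY\times_\XX\In\XX\to\YY$ is the base change of $\In\XX\to\XX$, its relative dimension equals the total rank of $\XX$; as $\In\YY\to\YY$ embeds into it as a closed substack, the total rank of $\YY$ is at most the total rank of $\XX$. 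This shows that $\In$ preserves $K^{\le d}$.

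Suppose now the total ranks agree, so $\In\YY\hookrightarrow\YY\times_\XX\In\XX$ is a closed immersion of smooth group spaces of the same relative dimension over $\YY$. Applying Lemma~\ref{lem:induced-map-of-components-of-identity} (fibrewise over the geometric points of $\YY$, where the total rank and degree are well-defined for clear gerbes), the map is an isomorphism on identity components and the induced map of component group spaces $\D\YY\to\YY\times_\XX\D\XX$ is a closed immersion of finite \'etale group spaces over $\YY$; here we use that the formation of the identity component and of its quotient commutes with base change. Comparing degrees over $\YY$, the total degree of $\YY$ (the degree of $\D\YY\to\YY$) is at most that of $\XX$, so $\In$ preserves $K^{\le(d,\delta)}$ as well.

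The point needing the most care is the preliminary bookkeeping: verifying that the fibrewise description of the automorphism groups of objects of $\In\XX$ as centralizers genuinely promotes to the closed immersion $\In\YY\hookrightarrow\YY\times_\XX\In\XX$ of group spaces over $\YY$, and that the total rank and total degree of a clear gerbe are computed, respectively, by the relative dimension of its inertia and by the degree of the component group space of its inertia. Once these identifications are granted, both inequalities are immediate consequences of the displayed inclusion together with Lemma~\ref{lem:induced-map-of-components-of-identity}.
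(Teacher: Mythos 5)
Your proposal is correct and follows essentially the same route as the paper: the closed immersion $\In\YY\hookrightarrow\YY\times_\XX\In\XX$ (given fibrewise by $Z_{\Aut(x)}(\phi)\hookrightarrow\Aut(x)$) bounds the total rank, and Lemma~\ref{lem:induced-map-of-components-of-identity} applied when the ranks agree bounds the total degree via the component groups, exactly as in the paper's two displayed diagrams.
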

\begin{proof} Let $\YY$ be a clear gerbe which is a locally closed substack of $\In \XX$. Consider the commutative diagram 
\begin{equation}\label{diag:total-inertia} 
\vcenter{\diag{
\In \YY \ar[rd] \ar@{^(->}[r]^-i & \In \XX|_\YY \ar[d] \ar[r] & \In \XX \ar[d] \\
& \YY  \ar[r] & \XX}}
\end{equation}
where $i$ is given over every object $(x, \phi \in Aut(x))$ by the closed immersion $Z_{\Aut (x)} (\phi) \hra \Aut (x)$. This shows that $\YY$ does not have a total rank larger than that of $\XX$. If the two total ranks are the same, the by Lemma \ref{lem:induced-map-of-components-of-identity} we have another commutative diagram 
\begin{equation}\label{diag:total-discrete-inertia} 
\vcenter{\diag{
\D \YY \ar[rd] \ar@{^(->}[r]^-i & \D \XX|_\YY \ar[d] \ar[r] & \D \XX \ar[d] \\
& \YY  \ar[r] & \XX}}
\end{equation}
which finished the proof. 
\end{proof}

\subsection{Semisimple and unipotent inertia} 
\label{sec:semisimple-inertia-definition}

If $G$ is an affine group scheme of finite type on base scheme $S$, an element $g \in G(S)$ is defined to be semisimple (unipotent) if for all scheme points $s \in S$ given as spectrum of a field, $g_s$ is semisimple (unipotent) in $G_s$.

\begin{defn} We define the \emph{semisimple (unipotent) inertia} of an algebraic stack $\XX$ to be the strictly full subcategory $\Iss \XX$ ($\Iu \XX$) of the inertia stack $\In \XX$ consisting over a base $S$ of those objects $(x, \phi)$ such that $\phi \in \Aut (x)$ is a semisimple (unipotent) element of the $S$-group scheme $\Aut (x)$. \end{defn} 
It is easy to check that $\Iu \XX \subset \In \XX$ is a closed substack of the inertia. 
According to \cite[3.5.1]{LMB} in order to check that $\Iss \XX$ is a substack of $\In \XX$ we only need to observe that if $f: U \to S$ is an \'etale surjection and $\phi \in \Aut (x)$ is an automorphism of an object $x$ over $S$ where $f^* \phi \in \Aut (f^* x)$ is semisimple then $f$ is also semisimple. But this follows from the above definition and the fact that being semisimple is preserved along field extensions. That is, given a group scheme $G \to S$, if $g$ is a $k$-valued point of $G_k$, and $K/k$ is an algebraically closed extension, and $g'$ the $K$-valued point of $G_K$ obtained by pullback, then $g$ is semisimple if and only if $g'$ is \cite[Expos\'e XII]{SGA3}.

Let $u: U \to \XX$ be an \'etale covering of $\XX$ by an algebraic space. We note that $\Aut^{ss} (u)$ may fail to be an algebraic space, however it is a locally constructible space by \cite[Expos\'e XII, Proposition 8.1]{SGA3}. On the other hand, the diagonal of $\Iss \XX$ is easily seen to be representable, separated and quasi-compact. Therefore $\Iss \XX$ can be written as a well-defined element in $K(\St)$ even though it is not necessarily an algebraic stack. 

A smooth commutative algebraic group $Z$ over a perfect field has a decomposition $Z= Z^{ss} \times U$ where $U$ is the unipotent radical of $Z$ and $Z^{ss}$ is a group of multiplicative type. If $Z$ is connected then so is $Z^{ss}$, in which case the latter is a torus \cite[XIV]{AGS}. The locus determined by the semisimple central automorphisms of objects of $\XX$ is denoted by $\ZssI \XX$ and fits in the cartesian diagram 
$$\diag{ 
\ZssI \XX \ar[r] \ar[d] & \XX \ar[d] \\
Z^{ss}=Z/U \ar[r] & X}$$
and hence has the structure of a group of multiplicative type over $\XX$. We similarly define $\ZuI \XX$ as the pull-back of $U$ along $\XX \to X$. It is easy to check that 
$$\ZuI \XX= \ZI \XX \cap \Iu \XX, \quad \text{ and } \quad 
\ZssI \XX = \ZI \XX  \cap \Iss\XX.$$

\subsection{Spectrum of the unipotent inertia}

Let $K_{\geq u}$ be the sub-vector space of $K(\St)$ spanned by clear gerbes $\XX \to X$, for which the unipotent central rank, is at least $u$. 
We use the notation $K^{\leq d}_{\geq u} = K_{\geq u} \cap K^{\leq d}$. 
 
\begin{prop} The operator $\Iu$ preserves $K^{\leq d}_{\geq u}$ and on the graded piece $K^{\leq d}_{\geq u}/ K^{\leq d}_{> u}$ operates by multiplication by $q^u$. 
\end{prop}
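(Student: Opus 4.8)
The plan is to mimic the structure of the proof of Proposition~\ref{prop:ascending-filtration} and the Deligne--Mumford analogue (Proposition~\ref{prop:DM-filtration-preserved}), but now tracking the unipotent central rank $\rho_u$ rather than the split central degree. First I would fix a clear gerbe $\XX \to X$ with $\rho_u(\XX) = u$, and a stratification of the unipotent inertia $\Iu\XX$ into clear gerbes; let $\YY$ be such a stratum. Over an object $(x,\phi)$ of $\YY$ with $\phi\in\Aut(x)$ unipotent, the centralizer $Z_{\Aut(x)}(\phi)\hra\Aut(x)$ is a closed immersion of smooth group schemes (by \cite[Lem.~2.2.4]{Conrad}), and its unipotent radical contains the image of the unipotent radical of $Z(\Aut(x))$. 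I would package this into a commutative diagram analogous to~(\ref{diag:central-inertia-of-inertia}), with $\ZuI\XX$ in place of $\ZI\XX$, to see that $\rho_u(\YY)\geq\rho_u(\XX)$; this establishes that $\Iu$ preserves $K_{\geq u}$, and combined with Proposition~\ref{prop:ascending-filtration} it preserves $K^{\leq d}_{\geq u}$.

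The more substantive step is computing the action on the graded piece. I would argue that when $\rho_u(\YY)=\rho_u(\XX)=u$, the stratum $\YY$ contributes to $K^{\leq d}_{\geq u}/K^{\leq d}_{>u}$ exactly when it lies in $\ZuI\XX$, and that the portion of $\Iu\XX$ lying in $\ZuI\XX$ decomposes, modulo $K^{\leq d}_{>u}$, into a contribution whose class is $q^u\,[\XX]$. The point is that $\ZuI\XX\to\XX$ is (the pullback of) the unipotent $X$-group scheme $U\to X$, and by Lemma~\ref{lem:unipotency-spreading-out} (after passing to the relevant strata) $U$ admits a filtration with successive quotients $\mb G_a$, so that after stratifying $X$ we may take $U\cong\mb A^u_X$ as a scheme; hence $[\ZuI\XX] = q^u[\XX]$ in $K(\St)$, since $\ZuI\XX\to\XX$ is a Zariski-locally trivial $\mb A^u$-bundle (cf.\ the footnote to Lemma~\ref{lem:unipotency-spreading-out}). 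All other strata $\YY$ of $\Iu\XX$ — those not contained in $\ZuI\XX$, or contained in it but with $\rho_u(\YY)>u$ — lie in $K^{\leq d}_{>u}$: for a stratum not in $\ZuI\XX$, the centralizer $Z_{\Aut(x)}(\phi)$ for noncentral unipotent $\phi$ has strictly larger unipotent central rank by the same centralizer-dimension argument used in Proposition~\ref{prop:descending-noncentral-stratum}, adapted to the unipotent radical via Lemma~\ref{lem:induced-map-of-components-of-identity}.

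The main obstacle I anticipate is the bookkeeping needed to ensure the decomposition of $\Iu\XX$ is clean modulo $K^{\leq d}_{>u}$: one must check that the ``central'' part of $\Iu\XX$, namely $\ZuI\XX$, really does split off as a $q^u[\XX]$ contribution and that the residual strata genuinely raise $\rho_u$, rather than merely $\rho$ or $\nu$. Concretely, I would need the analogue of Proposition~\ref{prop:descending-central-stratum} for the unipotent situation — asserting that if $\YY\subseteq\ZuI\XX$ then $\rho_u(\YY)\geq\rho_u(\XX)$ with a precise description of the equality case — together with the observation that within $\ZuI\XX$, since the relevant group is unipotent and hence has no nontrivial finite étale quotients of its component group, there is no ``discrete'' twisting to worry about: the whole of $\ZuI\XX$ descends and contributes $q^u[\XX]$. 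Once this is in place, the proof concludes as in the Deligne--Mumford case: on the associated graded the off-diagonal strata vanish and $\Iu$ acts by the scalar $q^u$.
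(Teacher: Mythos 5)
Your proposal follows the paper's own proof essentially step for step: preservation of $K^{\leq d}$ and of $K_{\geq u}$ via the unipotent restriction of the centralizer diagrams, strict increase of the unipotent central rank for strata not contained in $\ZuI\XX$ coming from the tautological unipotent section together with the characteristic-zero fact that there is no discrete (finite) unipotent contribution, and the exact identity $[\ZuI\XX]=q^u[\XX]$ from Zariski-local triviality of the bundle $U\times_X\XX\to\XX$. The only cosmetic difference is that the paper cites Hilbert's Theorem 90 for this last triviality where you cite the $\mb G_a$-filtration of Lemma \ref{lem:unipotency-spreading-out}; the content is the same.
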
 

\begin{proof} Let $\YY$ be a clear gerbe which is a locally closed substack of the clear gerbe $\XX$ with total rank at most $d$ and unipotent central rank at least $u$. Diagram \ref{diag:total-inertia} is enough to conclude that $K^{\leq d}$ is preserved by the unipotent inertia. Also in diagram \ref{diag:central-inertia-of-inertia} all unipotent automorphisms are mapped to unipotent automorphisms therefore inducing another commutative diagram
$${\xymatrix{
\ZuI \YY \drto_{\pi_3} & \YY \times_\XX \ZuI\XX \cartesian\rto \dto^{\pi_2}\ar@{_{(}->}_-j[l]&
  \ZuI\XX\dto^{\pi_1}\\
&\YY \rto &\XX}}
$$
In the level of group schemes, the unipotent radical of the central group scheme of $\XX$, pulls back to a subgroup of the unipotent radical of the central group scheme of $\YY$. This shows that $K_{\geq u}$ is preserved under $\Iu$. However if $\YY$ is not contained in $\ZI \XX$, 
$\pi_3$ has a trivial section that does not lift to $\pi_2$, this means that $\YY$ will have a strictly higher rank unipotent central group scheme. 
We conclude that 
\begin{align*}
[\Iu \XX] &= [\ZuI \XX]   \mod K^{\leq d}_{> u}\\
&= q^u [\XX] \mod K^{\leq d}_{>u}
\end{align*}
The second identity follows from the fact that $U \times_X \XX \to \XX$ is a Zariski locally trivial fibration by Hilbert's Theorem 90.   
\end{proof}

In immediately follows that,

\begin{cor}\label{cor:unipotent-inerta} The endomorphism $\Iu : K(\St) \to K(\St)$ is locally finite and diagonalizable $\mb Z [q]$-module operator on $K(\St)[q^{-1}, \{q^k - 1: k \geq 1\}]$ with eigenvalue spectrum of it consisting of all monomials $q^u$ for $u \geq 0$. 
\end{cor}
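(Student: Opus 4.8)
The plan is to deduce Corollary \ref{cor:unipotent-inerta} from the preceding Proposition, exactly as the local finiteness and diagonalization corollaries were deduced in the Deligne--Mumford case. The essential point is that $K(\St)$ is exhausted by the subspaces $K^{\leq d}$, and each of these is further filtered by the descending filtration $K_{\geq u}$, which is finite (for a clear gerbe of total rank at most $d$ the unipotent central rank lies between $0$ and $d$). So one first reduces any element of $K(\St)$ to a finite sum of classes $[\XX]$ of clear gerbes by Theorem \ref{thm:S-stratification}, and then works inside a fixed $K^{\leq d}$.

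First I would record that, by the Proposition, $\Iu$ preserves $K^{\leq d}$ and the filtration $K^{\leq d}_{\geq u}$, and acts by the scalar $q^u$ on the $u$-th graded piece. Hence on $K^{\leq d}$ the operator $\Iu$ is, with respect to the finite filtration $K^{\leq d} = K^{\leq d}_{\geq 0} \supseteq K^{\leq d}_{\geq 1} \supseteq \cdots \supseteq K^{\leq d}_{\geq d} \supseteq 0$, block upper-triangular with diagonal blocks the scalar operators $q^0, q^1, \ldots, q^d$. The scalars $q^0, \ldots, q^d$ are pairwise distinct elements of the coefficient ring once $q$, $q-1$, \dots, $q^d-1$ are inverted: indeed $q^i - q^j = q^j(q^{i-j}-1)$ is then a unit. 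Therefore, over the localized ring $R_d := \mb Z[q][q^{-1},\{(q^k-1)^{-1}: 1 \le k \le d\}]$, the characteristic polynomial of $\Iu|_{K^{\leq d}\otimes R_d}$ is $\prod_u (t - q^u)^{m_u}$ with distinct roots, so $\Iu$ is diagonalizable there by the standard argument (the operators $\prod_{v\neq u}(\Iu - q^v)/(q^u - q^v)$ are mutually orthogonal idempotents summing to the identity, cutting out the eigenspaces). Passing to the union over all $d$ gives diagonalizability of $\Iu$ on $K(\St)[q^{-1},\{(q^k-1)^{-1}:k\geq 1\}]$ as a $\mb Z[q]$-module operator, and local finiteness is immediate because every element lies in some $K^{\leq d}$, a finite-rank invariant submodule.

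For the eigenvalue spectrum, the containment ``$\subseteq \{q^u : u \geq 0\}$'' follows from the triangular form: every eigenvalue of $\Iu|_{K^{\leq d}}$ is one of $q^0, \ldots, q^d$. For the reverse containment one must exhibit, for each $u \geq 0$, a clear gerbe whose class is (or maps under $\Iu$ to a nonzero multiple of) $q^u$. The natural candidate is $B\mb G_a^u$ (or $[\Spec k/\mb G_a^u]$ in the quotient notation): its central band is $\mb G_a^u$, which is its own unipotent radical of central rank $u$, the inertia $\mb G_a^u \times B\mb G_a^u \to B\mb G_a^u$ is a Zariski-locally trivial $\mb A^u$-fibration, so $[\Iu B\mb G_a^u] = [\ZuI B\mb G_a^u] = q^u[B\mb G_a^u]$ exactly, making $[B\mb G_a^u]$ a genuine eigenvector with eigenvalue $q^u$ (and it is nonzero in $K(\St)$). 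I expect the only subtle point is the bookkeeping needed to make the localization uniform across all $d$ — i.e. checking that the eigenprojections defined at level $d$ are compatible with those at level $d+1$ so that they glue to well-defined operators on the colimit — but this is the same formal argument used for Deligne--Mumford stacks, and the scalar computation on $B\mb G_a^u$ via Hilbert 90 is exactly the one already invoked in the proof of the Proposition, so no new geometric input is required.
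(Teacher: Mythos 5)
Your proposal follows essentially the same route as the paper: the paper deduces the corollary directly from the preceding Proposition, noting that $\Iu K^{\leq d}_{\geq u} = q^u K^{\leq d}_{\geq u} + K^{\leq d}_{\geq u+1}$ with $K^{\leq d}_{\geq u}$ vanishing for $u>d$, which is exactly your finite-filtration/triangularity argument with the eigenvalues $q^0,\ldots,q^d$ becoming distinct after inverting $q$ and the $q^k-1$. One small repair: $K^{\leq d}$ is \emph{not} a finite-rank $\mb Z[q]$-module (it is spanned by infinitely many clear gerbes), so neither "finite-rank invariant submodule" nor "characteristic polynomial" is literally available; the correct statement is that the filtration argument shows $\prod_{u=0}^{d}(\Iu-q^u)$ annihilates $K^{\leq d}$, so the $\Iu$-cyclic submodule generated by any element is finitely generated (giving local finiteness) and your Lagrange-idempotent construction then applies verbatim to this annihilating polynomial. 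Your explicit eigenvector $[B\mb G_a^u]$ with eigenvalue $q^u$, showing every monomial actually occurs in the spectrum, is a point the paper leaves implicit.
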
 

\begin{proof} Local finiteness follows inductively from 
\begin{align*}
\Iu K^{\leq d}_{\geq u} &= q^u K^{\leq d}_{\geq u} + K^{\leq d}_{\geq u+1}\end{align*}
and the fact that $K^{\leq d}_{\geq u}$ is empty for all $u > d$. Diagonalizability is clear from the previous Proposition. 
\end{proof}

\section{Quasi-split stacks}\label{sec:quasi-split-stacks}
\label{ch:quasi-split} 

In this section we present a criteria that if satisfied guarantees the inertia endomorphism is locally finite and diagonalizable. 
For some preliminaries on group schemes of multiplicative type and unipotent group schemes we refer the reader to \S \ref{sec:max-tori}.

\begin{defn} A clear gerbe $\XX \to X$ is called quasi-split it the maximal torus of the central group scheme of it is a quasi-split torus. A full subcategory $\QS$ of $\St$ is called a quasi-split category if it is closed under inertia, scissor relations and fibre products and every object of it has a stratification by quasi-split gerbes.
\end{defn} 

So there is a well-defined induced $K(\Var)$-linear inertia endomorphism on the algebra $K(\QS)$.

\subsection{Motivic classes of quasi-split tori} 
\label{sec:motivic-computation-by-induction} 

Let $\Gamma$ be a finite group acting on the finite set $\ul r= \{1, 2, \ldots, r\}$. We consider an integer partition $\lambda \vdash r$ with declaring $\lambda_i$ to be the number of orbits of size $i$ and define a polynomial 
$$\ms Q_{\lambda} = \prod_{i=1}^\ell (q^{i} - 1)^{\lambda_i}\,.$$

\begin{prop}\label{prop:motivic-computation}\label{prop:motivic-class-of-quasi-split-torus} Let $T$ be an isotrivial quasi-split torus over the integral scheme $X$ and 
$\bar X$ a splitting cover of it, with Galois group 
$\Gamma$ acting by permutation of a basis of $\chi (T_{\bar X})$. The motivic class of $T$ is given by
\begin{align}\label{eq:motive-formula}
[T]= \ms Q_{\lambda} (q)[X]
+\sum_{\stack{I_\bullet \in F(\ul r)/\Gamma}{\Stab_\Gamma(I)
    \subsetneq\Gamma}}(-1)^{\ell(I_\bullet)}q^{|I_{\max}|}[\bar 
  X/\Stab_\Gamma(I)]\,.
\end{align}
where $\lambda \vdash r$ is the partition associated to the action of $\Gamma$ on a basis of $\chi (T)$. 
\end{prop}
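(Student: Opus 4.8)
The plan is to reduce the computation of $[T]$ to an inclusion–exclusion over flags of $\Gamma$-stable subsets of a permutation basis of $\chi(T_{\bar X})$, exactly paralleling the combinatorial argument used for $\ZI_r$ on Deligne–Mumford stacks. First I would use the anti-equivalence of \S\ref{sec:max-tori} to write $T = \uHom_{X\text{-gp}}(\bar X\times A/\Gamma, \mb G_m)$, where $A = \chi(T_{\bar X})\cong \mb Z^r$ is a permutation $\Gamma$-lattice with $\mb Z$-basis $e_1,\dots,e_r$ permuted by $\Gamma$ through its action on $\ul r$. After base change to $\bar X$, $T_{\bar X}\cong \mb G_{m,\bar X}^r$, and $\Gamma$ acts on this product by permuting the $r$ factors (composed with the Galois action on $\bar X$). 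The key point is that computing $[T]$ amounts to computing the class of the $\Gamma$-quotient of $\mb G_{m,\bar X}^r$, which I will stratify $\Gamma$-equivariantly.

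Next I would stratify $\mb G_m^r$ by which coordinates vanish: more precisely, $\mb G_m = \mb A^1 \setminus \{0\}$, so $\mb G_m^r$ embeds in $\mb A^r$ and one has the scissor decomposition of $\mb A^r$ into the locally closed strata $\mb A^r = \bigsqcup_{S\subseteq \ul r} (\mb G_m^S \times \{0\}^{\ul r\setminus S})$. Inverting this — i.e. expressing $[\mb G_m^r]$ via $[\mb A^S]$'s by Möbius inversion on the Boolean lattice, which produces the familiar $[\mb G_m^r] = \sum_{S}(-1)^{r-|S|}[\mb A^S] = \prod(q-1)$ — is the mechanism; but here it must be carried out $\Gamma$-equivariantly. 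The $\Gamma$-action on $\ul r$ makes the Boolean lattice of subsets a $\Gamma$-poset, and the relevant inclusion–exclusion is naturally indexed not by single subsets but by chains (flags) $I_\bullet = (I_1 \subsetneq I_2 \subsetneq \cdots)$ of $\Gamma$-stable subsets, with sign $(-1)^{\ell(I_\bullet)}$ where $\ell$ is the length. For such a flag, the corresponding stratum of $\mb G_m^r/\Gamma$ contributes $q^{|I_{\max}|}[\bar X/\Stab_\Gamma(I_\bullet)]$: the factor $q^{|I_{\max}|}$ records the affine-space directions indexed by coordinates in the largest member of the flag (a $\Gamma$-stable set of that size, whose quotient is an affine-bundle and trivializes motivically by Hilbert 90, contributing $q^{|I_{\max}|}$ times the base), and $[\bar X/\Stab_\Gamma(I_\bullet)]$ records the quotient of $\bar X$ by the stabilizer of the flag. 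The terms with $\Stab_\Gamma(I_\bullet) = \Gamma$, i.e. flags all of whose members are $\Gamma$-fixed (equivalently unions of full orbits), assemble into the principal term: summing $(-1)^{\ell}q^{|I_{\max}|}$ over all such flags of orbit-unions factorizes over orbit sizes and yields exactly $\prod_{i=1}^\ell (q^i-1)^{\lambda_i} = \ms Q_\lambda(q)$ times $[\bar X/\Gamma] = [X]$, by the same "beautiful identity" / elementary $\prod(q^i - 1)$ computation one sees for split tori. The remaining flags give the error sum in the statement.

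Concretely the steps are: (1) set up the permutation-lattice description of $T$ and reduce to $\mb G_{m,\bar X}^r$ with its $\Gamma$-action; (2) stratify $\mb G_m^r \subset \mb A^r$ by vanishing loci and descend the stratification to the quotient, observing that strata of the quotient are indexed by $\Gamma$-orbits of chains of $\Gamma$-stable subsets; (3) identify each quotient stratum, using Hilbert's Theorem~90 to peel off the affine directions as a factor $q^{|I_{\max}|}$ and to see that the remaining piece is $[\bar X/\Stab_\Gamma(I_\bullet)]$; (4) isolate the $\Stab = \Gamma$ flags and evaluate their signed sum to $\ms Q_\lambda(q)[X]$ by the elementary product identity, collecting everything else into the error term. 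The main obstacle I anticipate is step (2)–(3): making precise that the naive subset-indexed inclusion–exclusion must be replaced by a flag-indexed one once one passes to the $\Gamma$-quotient (because the quotient of a stratum is governed by the stabilizer of the whole flag, not of a single subset), and checking that the signs and the exponent $|I_{\max}|$ come out as claimed — this is the same bookkeeping subtlety that appears in the $\ZI_r$ computation, and I would import that argument essentially verbatim, with "central sections labelled by injections $\ul r\hra \ul n$" replaced by "coordinate subsets forming a $\Gamma$-stable flag."
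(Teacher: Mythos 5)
Your proposal is correct in substance and follows the paper's own proof: realize $T$ as $\bar X\times_\Gamma \mb G_m^r$ sitting inside $\bar X\times_\Gamma\mb A^r$, perform the $\Gamma$-equivariant, flag-indexed inclusion--exclusion expressing $[\mb G_m^r]$ through the $[\mb A^{I}]$'s, use Hilbert's Theorem 90 to turn each quotient stratum into $q^{|I_{\max}|}[\bar X/\Stab_\Gamma(I_\bullet)]$, and evaluate the signed sum over $\Gamma$-fixed flags to $\ms Q_\lambda(q)[X]$. The one imprecision to fix: the flags must run over chains of \emph{arbitrary} subsets of $\ul r$, with $\Gamma$ permuting the flags (so the sum is over $F(\ul r)/\Gamma$), not over chains of $\Gamma$-stable subsets as you write in two places --- if every member of a flag were $\Gamma$-stable, every flag would have stabilizer $\Gamma$, the error term would be empty, and the underlying scissor decomposition would be false; your later discussion of flag stabilizers and of the full-stabilizer flags being exactly the orbit-union flags shows you intend the correct indexing.
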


\begin{proof} For a subset $I\subseteq \ul r$, we denote by $\mb A^I\subseteq A^r$ the
subset of all $(x_1,\ldots,x_r)$ such that $x_i=0$, for $i\not\in I$,
and by $\mb G_m^I\subseteq \mb A^I$, the set of all $(x_1,\ldots,x_r)\in
\mb A^I$, such that $x_i\not=0$, for all $i\in I$.  For every
$I\subseteq \ul r$, we have a $\Gamma$-equivariant stratification 
$$\mb A^I=\bigsqcup_{J\subseteq I}\mb G_m^J\,,$$
and hence, 
$$[\mb G_m^I]=[\mb A^I]-\sum_{J\subsetneq I}[\mb G_m^J]\,.$$
By induction, we get an 
equivariant inclusion-exclusion principle
$$[\mb G_m^r]=\sum_{k\geq0}(-1)^k\sum_{I_\bullet\in F^k(\ul r)}[\mb A^{I_{k}}]\,.$$
Here $F^k(\ul r)$ is the set of all flags $I_\bullet=(I_k\subsetneq\ldots
\subsetneq I_1\subsetneq I_0=\ul r)$.  We denote the length of a flag
$I_\bullet$ by $k=\ell(I_\bullet)$, the maximal index by $k=\max$, and
the set of all flags, regardless of their lengths, by $F(\ul r)$. 

$\Gamma$ acts on the split torus $T_{\bar X}= \Spec (\mc O_{\bar X} [\chi_T])$  and by \cite[Prop. 5.21]{Lenstra} we can revive $T$ from this pull-back as 
$T \cong (\bar X \times_X T)/ \Gamma$. Then the surjection of $\mb Z$-module, 
$\bigoplus_{i=1}^r b_i \mb Z \to \chi_T$,
induces a sheaf homomorphism $\mc O_{\bar X} [b_1,\cdots, b_r] \to \mc O_{\bar X} [\chi_T]$ and consequently an open immersion 
$\mb G_{m, \bar X}^r \cong T_{\bar X}\to \mb A^r_{\bar X}$,
which is equivariant for the $\Gamma$-action. Hence we may pass to quotient schemes and get 
$$T = {\bar X} \times_\Gamma \mb G_{m}^r \hra {\bar X} \times_\Gamma \mb A^r\,.$$
We have
\begin{align*}
[T]&=[\bar X\times_\Gamma \mb G_m^r]\\
&=\sum_{k\geq0}(-1)^k[\bar
  X\times_\Gamma\bigsqcup_{I_\bullet\in F^k(\ul r)}\mb A^{I_k}] \\ 
&=\sum_{k\geq0}(-1)^k\sum_{I_\bullet\in F^k(\ul r)/\Gamma}[\bar
  X\times_{\Stab_\Gamma(I_\bullet)}\mb A^{I_k}]\\
&=\sum_{I_\bullet\in F(\ul r)/\Gamma}(-1)^{\ell(I_\bullet)}q^{|I_{\max}|}[\bar
  X/\Stab_\Gamma(I_\bullet)]\\
&=\sum_{I_\bullet\in F(\ul r)^\Gamma}(-1)^{\ell(I_\bullet)}q^{|I_{\max}|}[X]\\*
&\phantom{mmmm}+\sum_{\stack{I_\bullet\in F(\ul n)/\Gamma}{\Stab_\Gamma(I)
    \subsetneq\Gamma}}(-1)^{\ell(I_\bullet)}q^{|I_{\max}|}[\bar 
  X/\Stab_\Gamma(I)]\\
&= \ms Q_{\Omega} (q)[X]
+\sum_{\stack{I_\bullet \in F(\ul r)/\Gamma}{\Stab_\Gamma(I)
    \subsetneq\Gamma}}(-1)^{\ell(I_\bullet)}q^{|I_{\max}|}[\bar 
  X/\Stab_\Gamma(I)]\,.
\end{align*}
Note that all forms of affine spaces occurring in this computation are vector bundles over their base by Hilbert's Theorem 90. This is the reason for the appearance of the
terms  $q^{|I_k|}$ in the calculation. 
\end{proof} 

\subsection{Spectrum of the inertia}
\label{sec:descending-twist-type}

Let $\XX\to X$ be a quasi-split clear gerbe with a central $X$-group scheme $Z$ of reductive rank $t$. 
The Galois group $\Gamma= \pi_1 (\bar X/X)$ of the minimal splitting Galois cover is then a subgroup of $S_t$, the group of permutations of $t$ letters. 
This action induces an integer partition $\lambda \vdash t$ as explained in 
\S\ref{sec:motivic-computation-by-induction}.

\begin{defn} For a quasi-split clear gerbe $\XX$, the partition $\lambda$ constructed as above is called the \emph{twist type} of it. 
\end{defn}
We impose a well-ordering on the set of all integer partitions: 
\begin{enumerate} \itemsep-2pt
\item if $\lambda \vdash t$ and $\mu \vdash s$ and $t < s$ then $\lambda < \mu$; and 
\item if $\lambda, \mu \vdash t$ and $b(\lambda) < b(\mu)$ then $\lambda < \mu$. 
\end{enumerate}
Here $b(\lambda) = \sum_i \lambda_i$ is the \emph{number of blocks} of the partition $\lambda$. If $s = t$ and $b(\lambda) = b(\mu)$ the ordering between $\lambda$ and $\mu$ is not important. We may consider the lexicographic ordering for this case. 

The double-filtration in section \ref{sec:inertia-endo-Art} is similarly preserved in $K(\QS)$. We now introduce a refinement, $K_{\geq (\cdot, \cdot, \cdot)}$ of this filtration by declaring $K_{\geq (r, n, \lambda)}$ to be generated by those quasi-split clear gerbes that have central rank at least $r$, and if they have central rank exactly $r$, then their split central degree is at least $n$, and if it is exactly $n$, then their twist type is at least $\lambda$.

\begin{lem}\label{lem:QS-triangularization} The inertia endomorphism of $K(\QS)$ respects the filtration $K_{\geq(\cdot, \cdot, \cdot)}$ and on each graded piece 
$K_{\geq(r, n, \gamma)}/K_{>(r, n, \gamma)}$ 
operates by multiplication by the polynomial $n q^{r-t} \ms Q_{\gamma \vdash t} (q).$
\end{lem}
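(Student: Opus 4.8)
The plan is to reduce the whole statement, via the structural facts about clear gerbes proved above, to the single motivic computation of Proposition~\ref{prop:motivic-class-of-quasi-split-torus}, and then to check that every error term appearing there falls one step down the filtration.

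By $K(\Var)$-linearity it suffices to prove the congruence $\In[\XX]\equiv n\,q^{r-t}\ms Q_\gamma(q)\,[\XX]\pmod{K_{>(r,n,\gamma)}}$ for a single quasi-split clear gerbe $\XX\to X$ of central rank $r$, split central degree $n$ and twist type $\gamma\vdash t$; here $r-t$ is the unipotent rank of the central band, and $K_{>(r,n,\gamma)}$ is stable under multiplication by $q$ because tensoring a gerbe with $\mb A^1$ changes none of $r,n,\gamma$. First I would stratify $\In\XX$ into quasi-split clear gerbes and invoke Proposition~\ref{prop:descending-noncentral-stratum}: every stratum not contained in $\ZI\XX$ has strictly larger central rank, or the same central rank and strictly larger split central degree, so it lies in $K_{>(r,n,\gamma)}$. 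Thus $\In[\XX]\equiv[\ZI\XX]$. Next, using the decomposition $Z\cong Z^{ss}\times U$ of the central band (valid in characteristic zero, by the machinery of \S\ref{ch:groups}) and the fact that $\ZuI\XX\to\XX$ is a Zariski-locally trivial $\mb A^{r-t}$-bundle (Lemma~\ref{lem:unipotency-spreading-out} together with Hilbert's Theorem~90), one gets $[\ZI\XX]=q^{r-t}[\ZssI\XX]$.

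It remains to show $[\ZssI\XX]\equiv n\,\ms Q_\gamma(q)[\XX]$. The bundle $\ZssI\XX\to\DZI\XX$ is a torsor under the pullback of the central torus, and it is the pullback to $\XX$ of the $T$-torsor $Z^{ss}\to Z/Z^0$. Of the components of $\DZI\XX$ exactly $n$ map isomorphically to $\XX$, and by Proposition~\ref{prop:descending-central-stratum} the part of $\ZssI\XX$ lying over any other component has strictly larger split central degree, hence contributes to $K_{>(r,n,\gamma)}$. Over each of the $n$ distinguished copies of $\XX$, the restriction of $\ZssI\XX$ is a torsor under the pullback of the quasi-split torus $T$; such a torsor is pulled back from a $T$-torsor over $X$ (Remark~\ref{rmk:torsor-descends}), and since torsors under a quasi-split torus are Zariski-locally trivial (the torus being a special group) we may, after a further stratification of $X$, assume it is simply $\XX\times_X T$. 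Running the computation of Proposition~\ref{prop:motivic-class-of-quasi-split-torus} with the Galois cover $\widetilde\XX\to\XX$ in place of $\bar X\to X$ then yields a leading term $\ms Q_\gamma(q)[\XX]$ plus a sum of correction terms $\pm\,q^{|I_{\max}|}\,[\widetilde\XX/\Stab_\Gamma(I_\bullet)]$ indexed by flags $I_\bullet$ in $\ul t$ with $\Stab_\Gamma(I_\bullet)\subsetneq\Gamma$.

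The crux — and the step I expect to be the main obstacle — is to see that each correction term lies in $K_{>(r,n,\gamma)}$. The intermediate cover $\widetilde\XX/\Stab_\Gamma(I_\bullet)$ keeps central rank $r$ and its central torus is again the pullback of $T$, so it still has reductive rank $t$; its twist type is the orbit partition $\gamma'$ of $\Stab_\Gamma(I_\bullet)$ acting on the basis $\ul t$. The key combinatorial observation is that $\Stab_\Gamma(I_\bullet)\subsetneq\Gamma$ forces $\gamma'$ to have strictly more blocks than $\gamma$: at least one member $I_j$ of the flag fails to be $\Gamma$-stable, hence meets some $\Gamma$-orbit $O$ in a proper nonempty subset; since $\Stab_\Gamma(I_\bullet)$ preserves both $I_j$ and $O$ it preserves $I_j\cap O$, so it cannot act transitively on $O$, splitting $O$ into at least two orbits while leaving every other $\Gamma$-orbit unbroken. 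Therefore $\gamma'>\gamma$ in the chosen well-ordering and $[\widetilde\XX/\Stab_\Gamma(I_\bullet)]\in K_{>(r,n,\gamma)}$; the same block-count inequality disposes of any case in which a pulled-back Galois cover happens to be disconnected. Assembling the pieces gives $[\ZssI\XX]\equiv n\,\ms Q_\gamma(q)[\XX]$, hence $[\ZI\XX]\equiv n\,q^{r-t}\ms Q_\gamma(q)[\XX]$, and combined with the first reduction $\In[\XX]\equiv n\,q^{r-t}\ms Q_\gamma(q)[\XX]\pmod{K_{>(r,n,\gamma)}}$, as required.
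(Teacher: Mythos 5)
Your proposal is correct and takes essentially the same route as the paper's proof: discard the non-central and non-distinguished central strata via Propositions~\ref{prop:descending-noncentral-stratum}, \ref{prop:central-components-are-torsors} and \ref{prop:descending-central-stratum}, split off the unipotent factor of the central band to produce the $q^{r-t}$, use specialness of the quasi-split torus (and Remark~\ref{rmk:torsor-descends}) to replace each of the $n$ remaining torsors by the trivial one, and apply Proposition~\ref{prop:motivic-class-of-quasi-split-torus}, with the correction terms $[\bar\XX/\Stab_\Gamma(I_\bullet)]$ pushed into $K_{>(r,n,\gamma)}$ by a strict increase of twist type. Your flag-stabilizer argument showing that $\Stab_\Gamma(I_\bullet)\subsetneq\Gamma$ forces strictly more orbits is a welcome sharpening of the paper's brief assertion at that step.
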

\begin{proof} In view of results of section \ref{sec:inertia-endo-Art} it suffices to consider a tuple $(r, n, \lambda)$, and a quasi-split clear gerbe $\XX \to X$ with central group scheme $Z \to X$ of rank $r$, split central degree $n$, and twist type $\lambda \vdash t$ where $t$ is the rank of the maximal torus $T \subseteq Z$. Also from Propositions \ref{prop:descending-noncentral-stratum} and \ref{prop:central-components-are-torsors} we only need to consider a central stratum $\YY$ of $\In \XX$ which is an $\ZoI \XX$-torsor over $\XX$ and descends over $X$ to a $Z^0$-torsor $Y$. There are $n$ such strata. 

Let $\eta \in X$ be the generic point with residue field $K$. Over the algebraic closure, we have a decomposition $Z_{\bar K} = T_{\bar K} \times U_{\bar K}$ where $U_{\bar K}$ is the unipotent radical. By \cite[Cor. 15.10]{MilneALA}, this direct product decomposition descends to $K$ and by Lemma \ref{lem:unipotency-spreading-out} and Corollary \ref{cor:commutative-structure-spread-out} spreads out to a non-empty open of $X$ which we may without loss of generality assume is $X$ itself.
Both $U_X$ and $T_X$ are special (\cite[Prop 2.2]{Bergh}) and therefore so is $Z^0$. By Proposition \ref{prop:motivic-class-of-quasi-split-torus}  
$$
[Y]= q^{r-t} \ms Q_{\lambda \vdash t} (q) [X] + \sum_{\stack{I_\bullet \in F(\ul r)/\Gamma}{\Stab_\Gamma(I)
    \subsetneq\Gamma}}(-1)^{\ell(I_\bullet)}q^{r-t+ |I_{\max}|}[\bar X/\Stab_\Gamma(I)]\,,
$$    
for a finite \'etale covering $\bar X \to X$. 
And pulling back along $\XX \to X$,
\begin{equation}\label{eq:torsor-formula}
[\YY]= q^{r-t} \ms Q_{\lambda \vdash t} (q) [\XX] + \sum_{\stack{I_\bullet \in F(\ul r)/\Gamma}{\Stab_\Gamma(I)
    \subsetneq\Gamma}}(-1)^{\ell(I_\bullet)}q^{r-t+ |I_{\max}|}[\bar \XX/\Stab_\Gamma(I)]\,,
\end{equation}
where $\bar \XX = \XX|_{\bar X}$. 
Note that each of these stacks is a clear gerbe over the intermediate cover $\bar X / \Stab I$ and the maximal torus of $\XX$ pulls back to the maximal torus $T|_{\bar X / \Stab I}$ of the central band of $\bar \XX/ \Stab_\Gamma (I)$. Since $\Gamma'= \Stab I$ is a proper subgroup of $\Gamma$, the orbit space $\ul r/ \Gamma'$ has strictly more elements than $\ul r / \Gamma$. Therefore the twist type of $\bar \XX/ \Stab_\Gamma (I)$ is strictly larger than that of $\XX$.
\end{proof}

\begin{prop} The endomorphism $I: K(\QS) \to K(\QS)$ is locally finite. For every element $\XX$ in $\QS$ there exists a finite-dimensional $\mb Z[q]$-module of $K(\QS)$ that is invariant under inertia and contains the motivic class $[\XX]$. 
\end{prop}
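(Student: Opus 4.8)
The plan is to iterate the triangularization in Lemma \ref{lem:QS-triangularization} starting from $[\XX]$, controlled by the well-ordering on twist types, and verify that the process terminates. First I would reduce to the case where $\XX$ is a single quasi-split clear gerbe: by Theorem \ref{thm:S-stratification} (and the definition of a quasi-split category) an arbitrary $[\XX] \in K(\QS)$ is a finite $K(\Var)$-linear combination of classes of quasi-split clear gerbes, and since inertia is additive and $K(\Var)$-linear it suffices to produce a finite invariant $\mb Z[q]$-submodule containing each of these; then take the span of the union.

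The core of the argument is to show that, starting from one quasi-split clear gerbe $\XX_0$ with invariants $(r_0,n_0,\lambda_0)$, the set of all quasi-split clear gerbes that arise by repeatedly applying $\In$, stratifying the result into clear gerbes, and restricting attention to the strata produced, is finite up to the relation of generating the same line. The key estimates are already in place: by Propositions \ref{prop:descending-noncentral-stratum}, \ref{prop:central-components-are-torsors}, \ref{prop:descending-central-stratum} each stratum $\YY$ of $\In\XX$ satisfies $(\rho(\YY),\nu(\YY),\text{twist}(\YY)) \geq (\rho(\XX),\nu(\XX),\text{twist}(\XX))$ in the triple filtration, with the one stratum realizing equality being (a disjoint union of) copies of $\XX$ itself; and Proposition \ref{prop:ascending-filtration} gives the opposite bound $d(\YY)\leq d(\XX)$, $\delta(\YY)\leq\delta(\XX)$ on total rank and total degree. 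So every gerbe that ever appears has total rank $\leq d_0$ and total degree $\leq\delta_0$, while its central rank lies between $\rho_0$ and $d_0$. For a fixed central rank $r$, the twist type $\lambda\vdash t$ (with $t\leq r$) can only increase in the well-ordering, and since it increases strictly whenever $\Stab_\Gamma(I)\subsetneq\Gamma$ (the orbit count goes up) while the number of blocks of a partition of a bounded integer is bounded, the twist type stabilizes after finitely many steps; once it is maximal at a given $(r,n)$, formula \eqref{eq:torsor-formula} together with the inertia formula of section \ref{sec:inertia-endo-Art} expresses $[\In\YY]$ modulo strictly-higher-filtration terms as a $\mb Z[q]$-multiple of $[\YY]$ plus classes of \emph{intermediate covers} $\bar\XX/\Stab_\Gamma(I)$. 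I would then invoke a finiteness statement for such intermediate covers analogous to Lemma \ref{lem:int-cov} and Corollary \ref{cor:int-cov-prod} in the Deligne--Mumford case: the covers $\bar X\to X$ occurring are bounded by the Galois closure of the finitely many finite-\'etale covers appearing in the first few inertia iterations, so only finitely many intermediate quotients $\bar X/\Stab_\Gamma(I)$ — hence finitely many gerbes $\bar\XX/\Stab_\Gamma(I)$ — are ever produced.

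Assembling this, let $\mathcal S$ be the (finite) set of quasi-split clear gerbes obtained by the closure process just described, and let $M$ be the $\mb Z[q]$-submodule of $K(\QS)$ spanned by $\{[\YY] : \YY\in\mathcal S\}$. By construction $[\XX]\in M$, $M$ is finitely generated over $\mb Z[q]$, and Lemma \ref{lem:QS-triangularization} together with \eqref{eq:torsor-formula} shows $\In$ maps each generator of $M$ back into $M$, so $M$ is $\In$-invariant. This is the required finite-dimensional $\mb Z[q]$-module.

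The main obstacle is the termination/finiteness step: one must be careful that the triple $(\rho,\nu,\text{twist})$ genuinely forces descent — in particular that the twist type, although it lives in the infinite well-ordered set of partitions, is pinned to partitions of integers $\leq d_0$ once the central rank is fixed, so that strict increases can happen only finitely often — and that, in the ``equal-filtration'' case, the only new gerbes introduced are intermediate covers drawn from a set that is finite for the same reason Corollary \ref{cor:int-cov-prod} is finite. Making the latter precise requires identifying the relevant finite-\'etale covers (the splitting covers $\bar X\to X$ of the central tori and the discrete-central-band covers) and observing that all subsequent splitting covers are dominated by the Galois closure of these finitely many covers, exactly as in section \ref{ch:DM}.
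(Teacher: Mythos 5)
Your overall strategy is the paper's: bound everything from above by the ascending filtration $K^{\leq(d,\delta)}$ of Proposition \ref{prop:ascending-filtration}, observe that inside such a piece the triple (central rank, split central degree, twist type) can take only finitely many values (rank $\leq d$, degree $\leq\delta$, twist type a partition of an integer $\leq d$, capped by $1_d$), and use Lemma \ref{lem:QS-triangularization} to run a terminating induction, with the top of the filtration (where a gerbe consists of $\delta$ strata that are torsors for the split torus $\mb G_m^d$, so $\In$ acts exactly by $\delta(q-1)^d$) as the base case. That is exactly how the paper argues, and your reduction to a single quasi-split clear gerbe and passage to spans is fine.

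The problem is the step you yourself flag as the main obstacle. You treat the intermediate covers $\bar\XX/\Stab_\Gamma(I)$ appearing in \eqref{eq:torsor-formula} as terms at the \emph{same} filtration level, and therefore ask for a separate finiteness statement in the style of Lemma \ref{lem:int-cov} and Corollary \ref{cor:int-cov-prod}, to the effect that all splitting covers arising under iteration are dominated by a single Galois closure. No such statement is proved in the paper, it is not clear as you phrase it (gerbes produced at later stages have their own central tori and their own splitting covers, which need not be intermediate in a fixed cover of the original coarse space), and, crucially, it is not needed: Lemma \ref{lem:QS-triangularization} already records that each $\bar\XX/\Stab_\Gamma(I)$ with $\Stab_\Gamma(I)\subsetneq\Gamma$ has the same central rank, at least the same split central degree, and strictly larger twist type, hence lies strictly higher in the triple filtration, just like the noncentral and non-split central strata handled by Propositions \ref{prop:descending-noncentral-stratum}, \ref{prop:central-components-are-torsors} and \ref{prop:descending-central-stratum}. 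So each application of $\In$ to a generator returns a $\mb Z[q]$-multiple of that generator plus finitely many classes of strictly higher filtration value; since only finitely many values occur below $(d,\delta,1_d)$, the tree of new generators is finitely branching of bounded depth and finiteness is automatic, with no Galois-closure input. (A related small slip: the strata of $\In\XX$ realizing equality in the filtration are not copies of $\XX$ but $\ZoI\XX$-torsors over it; this is precisely why the diagonal coefficient is $n\,q^{r-t}\ms Q_{\lambda}(q)$ rather than $n$, as you in effect use when invoking \eqref{eq:torsor-formula}.) Once the intermediate covers are relocated to the strictly-higher part of the filtration, your argument closes and coincides with the paper's proof.
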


\begin{proof} Recall the ascending filtration $K^{\leq (d, \delta)}$ of $\St$ by total rank and total degree. Similar filtration is well-defined on $\QS$. There are finitely many possible twist types for the maximal tori of clear gerbes in $K^{\leq (d, \delta)}$. The central rank is bounded above by $d$, and the finest twist type is the partition of $d$ by $d$ blocks, which we denote by $1_{d} \vdash d$ and we may write 
$$K^{\leq (d, \delta)} = K^{\leq (d, \delta, 1_d)}, \quad K^{\leq (d, \delta, 1_d)}_{\geq (r,n, t)}=  K^{\leq (d, \delta, 1_d)} \cap  K_{\geq (r,n, t)}.$$
By a similar argument as of Proposition \ref{prop:ascending-filtration}, $K^{\leq (d, \delta, 1_d)}$ is preserved by the inertia. By the previous proposition we have 
$$\In K^{\leq (d, \delta, 1_d)}_{\geq (r, n, \lambda)} = n q^{r-t} \ms Q_{\gamma \vdash t} (q) . K^{\leq (d, \delta, 1_d)}_{\geq (r, n, \lambda)} + K^{\leq (d, \delta, 1_d)}_{\geq (r, n, \lambda)}$$
and the claim follows by induction. For the base case of this induction we only need to observe that $K^{\leq (d, \delta, 1_d)}_{\geq (r, n, \lambda)}$ is empty if either $r > d$ or $n > d$ and a clear gerbe in 
$K^{\leq (d, \delta, 1_d)}_{\geq (d, \delta, 1_d)}$ 
consists of $\delta$ strata each of which is a torsor for the split torus $\mb G_m^d$. Therefore the inertia operator acts on this filtered piece by multiplication by $\delta (q-1)^d$.  
\end{proof}
It immediately follows that,
\begin{thm}\label{thm:main}
The operator $\In$ is diagonalizable on $K(\QS)(q)= \mb Q(q) \tensor_{\mb Q[q]} K(\QS)$ as a linear endomorphism of a $\mb Q(q)$-vector space.  
The eigenvalue spectrum of it is the set of all polynomials of the form 
$$n q^u \prod_{i=1}^k (q^{r_i} -1).$$
\end{thm}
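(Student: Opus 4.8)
The plan is to combine the local finiteness just established with the triangularity of Lemma~\ref{lem:QS-triangularization}; the only genuinely new input is the observation that the diagonal eigenvalues occurring in that triangular form are pairwise distinct as elements of $\mb Q(q)$, so that ``triangular'' upgrades to ``diagonalizable''. First I would reduce to a finite-dimensional situation: by the previous Proposition every motivic class $[\XX]$ lies in a finitely generated $\mb Z[q]$-submodule of $K(\QS)$ invariant under $\In$, and extending scalars gives a finite-dimensional $\mb Q(q)$-subspace $V\subseteq K(\QS)(q)$ which is $\In$-invariant and contains $[\XX]$. Since $K(\QS)(q)$ is the union of such $V$'s and any two are contained in their sum, it suffices to prove that $\In|_V$ is diagonalizable for each $V$ and to record which scalars arise.

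Fix such a $V$. The descending filtration $K_{\geq(r,n,\lambda)}$ induces a finite descending filtration $V\cap K_{\geq(r,n,\lambda)}$ of $V$ indexed by the well-ordered set of triples, and only finitely many associated graded pieces $\mathrm{gr}_{(r,n,\lambda)}V$ are nonzero. Picking a basis of $V$ adapted to this filtration, Lemma~\ref{lem:QS-triangularization} says $\In|_V$ is triangular, the diagonal block attached to a fixed triple $(r,n,\lambda)$ with $\lambda\vdash t$ being the scalar matrix $P(r,n,\lambda)\cdot\id$ where $P(r,n,\lambda)=n\,q^{r-t}\,\ms Q_{\lambda}(q)=n\,q^{r-t}\prod_{i\geq1}(q^{i}-1)^{\lambda_i}$; in particular within one graded piece there are no off-diagonal entries. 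The key point is that $(r,n,\lambda)\mapsto P(r,n,\lambda)\in\mb Z[q]$ is injective: one recovers $n$ as the leading coefficient, $r-t$ as the order of vanishing at $q=0$, and then, dividing out, the $\Phi_d$-adic valuation of the monic polynomial $\prod_i(q^i-1)^{\lambda_i}$ equals $\sum_{d\mid i}\lambda_i$, so Möbius inversion over the divisibility poset recovers every $\lambda_i$, hence $\lambda$, hence $t=\sum_i i\lambda_i$ and $r=(r-t)+t$. Thus distinct diagonal blocks of $\In|_V$ carry distinct scalars, so the minimal polynomial of $\In|_V$ divides $\prod_c(x-c)$ over the finitely many distinct scalars $c$, which has simple roots in $\mb Q(q)$; hence $\In|_V$ is diagonalizable. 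This shows $\In$ is diagonalizable on $K(\QS)(q)$ and that every eigenvalue has the stated shape $n q^{u}\prod_i(q^{r_i}-1)$.

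It remains to realize every polynomial $n q^{u}\prod_{i=1}^{k}(q^{r_i}-1)$ with $n\geq1$, $u\geq0$, $r_i\geq1$, i.e.\ to exhibit, for the triple $(r,n,\lambda)$ with $\lambda_i=\#\{j:r_j=i\}$, $t=\sum_i i\lambda_i$, $r=t+u$, a quasi-split clear gerbe whose invariants are exactly these, so that $\mathrm{gr}_{(r,n,\lambda)}$ is nonzero. Here I would take a connected $\mb C$-variety $X$ admitting a connected Galois cover with group $\Gamma$ acting on $\ul t$ with orbit sizes prescribed by $\lambda$ (for instance $X$ a product of copies of $\mb G_m$ with $\Gamma=\prod_i(\mb Z/i)^{\lambda_i}$ realized by multiplication-by-$i$ isogenies), the quasi-split $X$-torus $T$ whose character lattice is the permutation $\Gamma$-lattice on $\ul t$, and set $\XX=\B_X\!\big((\mb Z/n)_X\times T\times\mb G_{a,X}^{u}\big)$. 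One checks that $\XX$ is a quasi-split clear gerbe with central group scheme $(\mb Z/n)_X\times T\times\mb G_{a,X}^{u}$, central rank $r=t+u$, reductive rank $t$, twist type $\lambda$, unipotent rank $u$, and split central degree $n$ (the discrete central band is the constant group $(\mb Z/n)_X$, with $n$ sections over $X$); then by Lemma~\ref{lem:QS-triangularization} the class $[\XX]$ has nonzero image in $\mathrm{gr}_{(r,n,\lambda)}$ on which $\In$ acts by $P(r,n,\lambda)$, so this polynomial is an eigenvalue.

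The hard part here is really just the eigenvalue-parameter correspondence: establishing that $(r,n,\lambda)\mapsto P(r,n,\lambda)$ is a bijection onto the asserted set. Injectivity (needed for diagonalizability) rests on the Möbius-inversion bookkeeping of the multiplicities $\lambda_i$ sketched above — the subtle point being that the polynomial must remember the full partition, not merely the torus rank $t$ — and surjectivity (needed for the spectrum) is the routine classifying-stack construction of the last paragraph, which only uses that $\mb C$-varieties admit enough connected Galois covers. Everything else is formal once Lemma~\ref{lem:QS-triangularization} and the local finiteness Proposition are in hand.
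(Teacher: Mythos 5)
Your argument is correct and follows essentially the paper's own route: the theorem is deduced from the local-finiteness proposition together with Lemma~\ref{lem:QS-triangularization}, with triangularity (scalar action $n\,q^{r-t}\ms Q_{\lambda}(q)$ on each graded piece of the filtration restricted to a finite-dimensional invariant submodule) upgrading to diagonalizability because the scalars attached to distinct triples $(r,n,\lambda)$ are pairwise distinct. The two points you spell out explicitly — injectivity of $(r,n,\lambda)\mapsto n\,q^{r-t}\ms Q_{\lambda}(q)$ via the leading coefficient, the order of vanishing at $q=0$ and the cyclotomic multiplicities, and the classifying stacks $\B_X\bigl((\mb Z/n)_X\times T\times\mb G_{a,X}^{u}\bigr)$ realizing every polynomial of the stated form — are left implicit in the paper, so your write-up is a more detailed version of the same proof rather than a different one.
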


\begin{rmk} In fact $\In$ is diagonalizable as an endomorphism of the $\mb Z[q]$-module 
$$K(\QS)[q^{-1}, \{ (\ms Q_\lambda - \ms Q_\mu)^{-1}: \forall \lambda \vdash t, \mu \vdash s\}).$$
\end{rmk}

\subsection{Spectrum of the semisimple inertia}

We can now prove a semisimple version of Proposition \ref{prop:descending-noncentral-stratum} in terms of the reductive ranks of the quasi-split clear gerbes rather than their central ranks. Note that the finite group scheme $Z/ Z^0$ is semisimple and product of semisimple commuting elements is semisimple. Therefore there are $\nu (\XX)$ connected components of $\DZssI \XX$ that map isomorphically to $\XX$. In fact, the morphism $Z^{ss} / Z^{ss, 0} \to Z/ Z^0$ is an isomorphism of finite $X$-group schemes.

\begin{prop}
Let $\YY$ be a stratum of $\Iss \XX$ not completely contained in $\ZssI \XX$. 
Then $\rho_r(\YY) \geq \rho_r(\XX)$ and if $\rho_r(\YY) = \rho_r(\XX)$ then $\nu(\YY) > \nu(\XX)$. 
\end{prop}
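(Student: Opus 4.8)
The plan is to repeat the argument of Proposition~\ref{prop:descending-noncentral-stratum}, replacing the central inertia $\ZI$ by the semisimple central inertia $\ZssI$, the discrete central inertia $\DZI$ by $\DZssI$, and the central rank $\rho$ by the reductive rank $\rho_r$ of the central band. I would fix a stratification of $\Iss\XX$ into clear gerbes, take $\YY$ to be such a stratum with $\YY\not\subseteq\ZssI\XX$, and observe that its objects are pairs $(x,\phi)$ with $\phi\in\Aut(x)$ semisimple and with automorphism group $Z_{\Aut(x)}(\phi)$. As in Proposition~\ref{prop:descending-noncentral-stratum}, any element of $Z(\Aut(x))$ centralises all of $Z_{\Aut(x)}(\phi)$ and lies in it, so by \cite[Lem.~2.2.4]{Conrad} --- and after passing to subgroups of multiplicative type, which are stable under closed immersions of smooth commutative group schemes and under base change --- I would obtain the semisimple analogue of diagram~(\ref{diag:central-inertia-of-inertia}), with a closed immersion $j:\YY\times_\XX\ZssI\XX\hookrightarrow\ZssI\YY$.

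From this diagram $\rho_r(\YY)\ge\rho_r(\XX)$ would be immediate, since $\rho_r$ is the dimension of the identity component of the multiplicative part of the central band, and a closed immersion of those parts can only enlarge that identity component. If $\rho_r(\YY)=\rho_r(\XX)$, the induced closed immersion of maximal central tori has equal source and target dimension, hence is an isomorphism; then invoking Lemma~\ref{lem:induced-map-of-components-of-identity}, the identification $Z^{ss}/Z^{ss,0}\cong Z/Z^0$ from \S\ref{sec:semisimple-inertia-definition}, and the compatibility of identity-component formation with base change, I would obtain the semisimple analogue of diagram~(\ref{diag:whtd-discrete}), in which $\bar j$ is a closed immersion of finite \'etale group schemes, hence an isomorphism onto a union of connected components. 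The map $\DZssI\YY\to\YY$ carries the canonical section $(x,\phi)\mapsto(x,\phi,[\phi])$ induced by the diagonal of $\YY$ over $\XX$, which is legitimate because $\phi$ is semisimple and central in its own centraliser. Exactly as in Proposition~\ref{prop:descending-noncentral-stratum} --- using that the maximal central torus of $\YY$ is identified with that of $\XX$ and therefore sits inside $Z(\Aut x)$, and that $\ZssI\XX=\ZI\XX\cap\Iss\XX$ is closed in $\Iss\XX$ --- this section cannot factor through $\bar j$, for otherwise $\phi$ would be generically a central automorphism of $x$ and $\YY$ would be contained in $\ZssI\XX$. Hence $\DZssI\YY\to\YY$ has strictly more sections than those pulled back from $\DZssI\XX\to\XX$, and since $\nu(\YY)$ counts sections of $\DZssI\YY\to\YY$ (recall $\DZssI\cong\DZI$), this gives $\nu(\YY)>\nu(\XX)$.

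The one point I expect to need care --- and the only real difference from Proposition~\ref{prop:descending-noncentral-stratum} --- is the bookkeeping for the multiplicative-type part: one must check that a closed immersion $A\hookrightarrow B$ of smooth commutative group schemes restricts to closed immersions $A^{ss}\hookrightarrow B^{ss}$, $(A^{ss})^0\hookrightarrow(B^{ss})^0$ and $A^{ss}/(A^{ss})^0\hookrightarrow B^{ss}/(B^{ss})^0$, all compatibly with base change, and that although $\Iss\XX$ and $\ZssI\XX$ are only locally constructible spaces in general, this is harmless here --- one first shrinks the coarse space $X$ to the dense open where the central band decomposes as $T\times U$ by Corollary~\ref{cor:commutative-structure-spread-out}, which is all that is needed to read off the invariants $\rho_r$ and $\nu$. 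This is the main point to get right, but given the machinery of \S\ref{ch:groups} it should be routine.
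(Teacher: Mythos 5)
Your proposal follows essentially the same route as the paper's own proof: you form the semisimple analogues of diagrams~(\ref{diag:central-inertia-of-inertia}) and~(\ref{diag:whtd-discrete}) by passing to the multiplicative (equivalently, modulo-unipotent) parts of the central group schemes, read off $\rho_r(\YY)\ge\rho_r(\XX)$ from the closed immersion $j$, and in the equal-rank case repeat the section-counting argument with the canonical section $(x,\phi)\mapsto(x,\phi,[\phi])$, exactly as the paper does by invoking the ``same analysis'' as Proposition~\ref{prop:descending-noncentral-stratum}. The extra bookkeeping you flag (closed immersions respecting multiplicative parts, $Z^{ss}/Z^{ss,0}\cong Z/Z^0$, harmlessness of $\Iss\XX$ being only locally constructible) is correct and simply makes explicit what the paper leaves implicit.
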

\begin{proof} 
Let $\YY \subseteq \Iss \XX$ be a strata of the semisimple inertia, in particular a locally closed substack of $\In \XX$. Diagram \ref{diag:central-inertia-of-inertia} has a semisimple verison. The downward arrows $\pi_1$, $\pi_2$ and $\pi_3$ are all structure morphisms of relative commutative group schemes. Since unipotency is preserved under the group homomorphisms we may divide each of these commutative group schemes with their unipotent radical. 
\begin{equation}
\vcenter{\xymatrix{
\ZssI \YY \drto_{\pi_3} & \YY \times_\XX \ZssI\XX \cartesian\rto \dto^{\pi_2}\ar@{_{(}->}_-j[l]&
  \ZssI\XX\dto^{\pi_1}\\
&\YY \rto &\XX}}
\end{equation}
It now follows that $\rho_r (\YY) \geq \rho_r(\XX)$. If $\pi_2$ and $\pi_3$ are of identical relative dimensions, then we may pass to the discrete central semisimple inertia similar to the argument of  Proposition \ref{prop:descending-noncentral-stratum}:
\begin{equation}\label{diag:whtd-discrete}
\vcenter{\xymatrix{
\DZssI \YY\drto_{\pi_3} &\ (\DZssI \XX)_\YY \ar@{_{(}->}[l]_-{\bar j}\dto^{\pi_2}\rto  & \DZssI\XX\dto^{\pi_1}\\
&\YY\rto & \XX}}
\end{equation}Same analysis as in case of Proposition \ref{prop:descending-noncentral-stratum} shows that $\pi_3$ has strictly more sections that $\pi_1$. 
\end{proof}

When $\XX$ is a quasi-split clear gerbe, the connectedness components of the semisimple central inertia $\ZssI \XX \to \XX$ are also quasi-split clear gerbes; this yields a canonical stratification of $\ZssI \XX$. The analogue of Propositions \ref{prop:central-components-are-torsors} and \ref{prop:descending-central-stratum} is stated below: 

\begin{prop} Let $\YY$ be a connectedness component of $\ZssI \XX \to \XX$. Then 
$\rho_r(\YY) = \rho_r(\XX)$. We always have $\nu(\YY) \geq \nu(\XX)$, with equality happening if and only if the image of $\YY$ in $\DZssI \XX$ maps down isomorphically to $\XX$.
\end{prop}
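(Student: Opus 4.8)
The plan is to transcribe the proofs of Propositions~\ref{prop:central-components-are-torsors} and~\ref{prop:descending-central-stratum}, systematically replacing the central band $Z$, the central inertia $\ZI$, the connected central inertia $\ZoI$ and the discrete central band $Z/Z^0$ (hence $\DZI$) by their multiplicative-type counterparts $Z^{ss}=Z/U$, $\ZssI$, $\ZssoI$ and $Z^{ss}/Z^{ss,0}$ (hence $\DZssI$), and replacing ``central rank'' by ``reductive rank''. The structural input is, exactly as before, the identification $\ZssI(\ZssI\XX)\cong \ZssI\XX\times_\XX\ZssI\XX$ of commutative group stacks over $\XX$, sitting in a cartesian square over the multiplication $Z^{ss}\times_X Z^{ss}\to Z^{ss}$: for a \emph{central} automorphism $\psi$ of an object $x$ one has $Z_{\Aut(x)}(\psi)=\Aut(x)$, so an object $(x,\psi)$ of $\ZssI\XX$ has the same automorphism group as $x$, whence the central band of $\ZssI\XX\to Z^{ss}$ is the pullback of $Z$ and its multiplicative part is the pullback of $Z^{ss}$. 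Since $\ZssI\XX\to Z^{ss}$ is a gerbe, a connected component $\YY$ of $\ZssI\XX$ corresponds to a connected component $Y\subseteq Z^{ss}$ and is a gerbe over $Y$ with $\YY\cong\XX\times_X Y$; the semisimple analogue of diagram~\ref{diag:inertia-of-commutative-gerbe} then exhibits the semisimple central band of $\YY$ over $Y$ as the pullback $Z^{ss}|_Y$, whose torus part $Z^{ss,0}|_Y$ has rank $\rho_r(\XX)$, giving $\rho_r(\YY)=\rho_r(\XX)$.

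For the statement about $\nu$, I would first record that $\ZssI\XX\to\DZssI\XX$ is the pullback to $\XX$ of the $Z^{ss,0}$-torsor $Z^{ss}\to Z^{ss}/Z^{ss,0}$; its structure group $\ZssoI\XX$ is a connected group scheme, so connected components of $\ZssI\XX$ biject with those of $\DZssI\XX$, and $\DZssI\XX\to\XX$ is finite \'etale (being the pullback of the finite \'etale $Z^{ss}/Z^{ss,0}\to X$). Thus a component $\YY\subseteq\ZssI\XX$ lies over a component $\YY'\subseteq\DZssI\XX$ with $\YY\to\YY'$ a $\ZssoI\XX$-torsor and $\YY'\to\XX$ finite \'etale. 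Applying the identification above to $\YY$ gives a cartesian square $(\ZssI\XX)_\YY\cong\ZssI\YY$ over $\YY$, hence a cartesian square $(\DZssI\XX)_\YY\cong\DZssI\YY$, so sections of $\DZssI\XX\to\XX$ pull back to distinct sections of $\DZssI\YY\to\YY$ and $\nu(\YY)\geq\nu(\XX)$. Now, exactly as in Proposition~\ref{prop:descending-central-stratum}: if $\YY'\to\XX$ is not an isomorphism, the structure map $\YY\to\XX$ factors through $\YY'$ and produces a section of $\DZssI\YY\to\YY$ whose image is $\YY'$ itself (not a degree-one cover of $\XX$), hence not pulled back from $\DZssI\XX\to\XX$, so $\nu(\YY)>\nu(\XX)$; whereas if $\YY'\to\XX$ is an isomorphism, then $\YY\to\XX$ is a torsor under the connected group scheme $\ZssoI\XX$, so sections of $\DZssI\YY\to\YY$ push forward to $\DZssI\XX\to\XX$, giving $\nu(\YY)\leq\nu(\XX)$ and hence equality.

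The main obstacle is purely foundational: one must make sure the displayed diagrams make sense inside $\St$, i.e.\ that $Z^{ss}=Z/U$ is an honest $X$-scheme and $\ZssI\XX=\XX\times_X Z^{ss}$ is a genuine algebraic stack with the same connected components as $Z^{ss}$. This follows from the decomposition $Z=Z^{ss}\times U$ over the perfect field $k$, spread out as in Corollary~\ref{cor:commutative-structure-spread-out}, which is already built into the definition of a quasi-split clear gerbe. One should also record the observation (noted just before the statement) that $Z^{ss}/Z^{ss,0}\to Z/Z^0$ is an isomorphism of finite \'etale $X$-group schemes, so that $\DZssI\XX\cong\DZI\XX$ and the entire bookkeeping with sections and with $\nu$ is literally that of Proposition~\ref{prop:descending-central-stratum}. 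Granting these, the argument is a routine transcription of the earlier proofs.
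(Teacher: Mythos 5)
Your proposal is correct and takes essentially the same route as the paper's own proof: both transcribe Propositions \ref{prop:central-components-are-torsors} and \ref{prop:descending-central-stratum} to the semisimple setting via the identification $\ZssI(\ZssI\XX)\cong \ZssI\XX\times_\XX\ZssI\XX$ lying over $Z^{ss}\times_X Z^{ss}\to Z^{ss}$, the bijection of components induced by the $\ZssoI\XX$-torsor $\ZssI\XX\to\DZssI\XX$, and the same section-counting dichotomy for the equality case. The foundational points you flag (that $Z^{ss}/Z^{ss,0}\to Z/Z^0$ is an isomorphism and that $Z^{ss}$ is an honest $X$-group scheme) are exactly what the paper records just before the statement or builds into the definition of a quasi-split clear gerbe, so no gap remains.
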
 

\begin{proof} The proof is similar to that of Propositions \ref{prop:central-components-are-torsors} and \ref{prop:descending-central-stratum} by considering the commutative diagram 
$$\diag{\cartesian \ZssI (\ZssI \XX) \ar[r] \ar[d] & \ZssI \XX \ar[d] \ar[r] & \XX \ar[d] \\ 
Z^{ss} \times_X Z^{ss} \ar[r] & Z^{ss} \ar[r] & X}$$
for the first claim where by restricting to $\YY$ we get 
$$\diag{\cartesian \ZssI (\YY) \ar[r] \ar[d] & \YY \ar[d] \\  Z^{ss} \times_X Y \ar[r] & Y.}
$$
Note that $\ZssI \XX\longrightarrow \DZssI\XX$ is a principal bundle for the connected group scheme $\ZssoI\XX$ over $\XX$, and therefore there is a bijection between connectedness components of the source
and the target of this morphism. So $\YY$ sits over a connectedness component $\YY' \subseteq \DZI \XX$. Similar to the case in Proposition \ref{prop:descending-central-stratum} we now can form the following cartesian diagram
$$\diag{\DZssI \YY\ar[r] \ar[d]_{\bar\pi_2}\cartesian & \DZssI\XX\dto^{\bar\pi_1}\\
\YY \rto & \XX}
$$
and the rest of the proof is now similar to Proposition \ref{prop:descending-central-stratum}. 
\end{proof}

The proofs of local finiteness and diagonalization of $\Iss$ now follow similar to the case of the full inertia operator. The filtration to consider in this case is $K^{\leq (d, \delta, 1_d)}_{\geq (t, n, \lambda)}$ where $K_{\geq (t, n, \lambda)}$ is generated by those clear gerbes for which the reductive rank is at least $t$, and if it is exactly $t$ then the discrete central order is at least $n$ and if it is exactly $n$, then the twist type of the maximal torus is at least $\lambda$. The semisimple inertia acts on the filtered pieces via 
$$\In^\ss K^{\leq (d, \delta, 1_d)}_{\geq (r, n, \lambda)} = n \ms Q_{\gamma \vdash t} (q) . K^{\leq (d, \delta, 1_d)}_{\geq (r, n, \lambda)} + K^{\leq (d, \delta, 1_d)}_{\geq (r, n, \lambda)}.$$
We conclude that,

\begin{thm}\label{thm:semisimple-main}
The endomorphism $\Iss: K(\QS) \to K(\QS)$ is locally finite and triangularizable and 
the eigenvalue spectrum of it is the set of all polynomials of the form 
$$n \prod_{i=1}^k (q^{r_i} -1).$$
Moreover $\Iss$ is a diagonalizable $\mb Q(q)$-linear endomorphism of the vector space $K(\QS)(q)$. 
\end{thm}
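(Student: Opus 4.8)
\emph{Proof plan.} The plan is to transcribe the proof of Theorem~\ref{thm:main}, replacing central rank by reductive rank and the group scheme $\ZoI\XX$ by the central torus $\ZssoI\XX$ throughout. First I would fix the two filtrations already described before the statement: the ascending $K^{\leq(d,\delta,1_d)}$ by total rank and total degree, and the descending $K_{\geq(t,n,\lambda)}$ (reductive rank $\geq t$; if $=t$, discrete central order $\geq n$; if $=n$, twist type of the maximal torus $\geq\lambda$ in the chosen well-order on partitions). I would then check that $\Iss$ preserves $K^{\leq(d,\delta,1_d)}$ --- the proof of Proposition~\ref{prop:ascending-filtration} applies verbatim since $\Iss\XX$ is a substack of $\In\XX$, the relevant centralizer inclusions being closed immersions and Lemma~\ref{lem:induced-map-of-components-of-identity} controlling the associated discrete parts --- and that $\Iss$ preserves each $K_{\geq(t,n,\lambda)}$, which is precisely the content of the two Propositions proved just above (the semisimple analogues of Propositions~\ref{prop:descending-noncentral-stratum}, \ref{prop:central-components-are-torsors} and \ref{prop:descending-central-stratum}).

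Next I would compute the induced action on the associated graded pieces. Let $\XX\to X$ be a quasi-split clear gerbe of reductive rank $t$, discrete central order $n$ and twist type $\lambda\vdash t$. By the first of those Propositions every stratum of $\Iss\XX$ not contained in $\ZssI\XX$ already lies in $K_{>(t,n,\lambda)}$, while the second identifies the $n$ connected components of $\ZssI\XX$ that are $\ZssoI\XX$-torsors mapping isomorphically to $\XX$, the remaining components again lying in $K_{>(t,n,\lambda)}$. By the analogue of Remark~\ref{rmk:torsor-descends} each of those $n$ torsors is pulled back from a $Z^{ss,0}$-torsor over $X$; since $Z^{ss,0}$ is a quasi-split torus it is special (\cite[Prop.~2.2]{Bergh}), so Proposition~\ref{prop:motivic-class-of-quasi-split-torus} applies --- now with torus rank equal to the reductive rank, so that, in contrast with Lemma~\ref{lem:QS-triangularization}, no factor $q^{u}$ appears --- and attributes to each torsor the class $\ms Q_\lambda(q)[X]$ modulo classes of intermediate covers of strictly larger twist type. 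Pulling back along $\XX\to X$ and summing over the $n$ components yields $[\Iss\XX]\equiv n\,\ms Q_\lambda(q)\,[\XX]$ in $K_{\geq(t,n,\lambda)}/K_{>(t,n,\lambda)}$, as announced before the statement.

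From here the argument follows the full-inertia case (\S\ref{sec:descending-twist-type}) step by step. On $K^{\leq(d,\delta,1_d)}$ the triples $(t,n,\lambda)$ with $t\leq d$, $n\leq\delta$ and $\lambda$ a partition of an integer $\leq d$ form a finite well-ordered set, so $\Iss$ is triangular there with diagonal entries $n\,\ms Q_\lambda(q)$; and starting from a single class $[\XX]$ the iterates of $\Iss$ only ever involve the strata and the finitely many intermediate Galois covers occurring in the formulas above, which --- exactly as in Lemma~\ref{lem:int-cov} and Corollary~\ref{cor:int-cov-prod} --- are finite in number up to isomorphism, each in a strictly higher filtration step; hence they span a finitely generated $\Iss$-invariant $\mb Z[q]$-submodule containing $[\XX]$, proving local finiteness and triangularizability over $\mb Q[q]$. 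To diagonalize over $\mb Q(q)$ I would prove the elementary lemma that the diagonal polynomials $n\,\ms Q_\lambda(q)$ are pairwise distinct for distinct $(n,\lambda)$: the multiplicity of the cyclotomic polynomial $\Phi_m$ in $\ms Q_\lambda=\prod_i(q^i-1)^{\lambda_i}$ equals $\sum_{m\mid i}\lambda_i$, so M\"obius inversion recovers $\lambda$ from $\ms Q_\lambda$, after which dividing $n\,\ms Q_\lambda$ by $(q-1)^{b(\lambda)}$ and evaluating at $q=1$ recovers $n$. Thus the triangular form acquires pairwise distinct diagonal entries after inverting the differences $\ms Q_\lambda-\ms Q_\mu$, a fortiori over $\mb Q(q)$, which forces $\Iss$ to be diagonalizable on $K(\QS)(q)$. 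Finally, the diagonal entries of a triangularizable operator are its eigenvalues, so every $n\,\ms Q_\lambda(q)$ occurs; conversely any polynomial $n\prod_{i=1}^k(q^{r_i}-1)$ is the diagonal entry of $\Iss$ at the neutral gerbe $B_X(\mb Z_n\times T)$ of a group scheme over a connected base variety $X$ whose connected component is a quasi-split torus $T$ of twist type $\lambda$ with parts $r_1,\dots,r_k$ (such $X$ and $T$ exist over $\mb C$, e.g.\ $X=\mb G_m$ with $T$ a product of Weil restrictions), giving the stated spectrum.

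I expect the main obstacle to be the local-finiteness step: making it precise that iterating $\Iss$ produces only finitely many distinct intermediate covers, so that the invariant submodule really is finitely generated, requires interlacing the combinatorics of Galois closures (Lemma~\ref{lem:int-cov}) with the twist-type induction underlying Proposition~\ref{prop:motivic-class-of-quasi-split-torus}. By comparison, the distinctness-of-eigenvalues lemma, though indispensable for the passage from triangular to diagonal over $\mb Q(q)$, is a routine unique-factorization argument.
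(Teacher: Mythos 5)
Your proposal is correct and follows essentially the same route as the paper: the paper likewise proves the two semisimple analogues of Propositions \ref{prop:descending-noncentral-stratum}--\ref{prop:descending-central-stratum} and then transcribes the full-inertia argument, using the filtration $K^{\leq(d,\delta,1_d)}_{\geq(t,n,\lambda)}$ by reductive rank, discrete central order and twist type, with the graded action $n\,\ms Q_\lambda(q)$ (no $q^u$ factor) coming from Proposition \ref{prop:motivic-class-of-quasi-split-torus} applied to the maximal-torus torsors. The extra details you supply (pairwise distinctness of the polynomials $n\,\ms Q_\lambda$ and realization of every value in the spectrum) are consistent refinements of what the paper leaves implicit.
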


\section{Examples}

\begin{ex}\label{ex:BGL2} A first simple example is the case of $[\BGL_2]$. Here, and in following examples 
we are suppressing the notation $[.]$ for quotient stacks; thus unless mentioned otherwise, all quotients (of schemes) are stack quotients. Note that we have
$$\In \BGL_2= \GL_2/\GL_2= (\GL_2)^{\text{\tiny ss, eq}}/\GL_2
\sqcup (\GL_2)^{\text{\tiny dist}}/\GL_2
\sqcup (\GL_2)^{\text{\tiny ns}}/\GL_2\,.$$
The first stratum contains diagonalizable matrices with one eigenvalue, the second stratum diagonalizable matrices with distinct
eigenvalues, and the third stratum the non-semisimple matrices. We study
these three strata and their inertia: 

\underline{First stratum}: Consider the mapping
$\mb G_{m}\to \GL_2$ via $x\mapsto \smat{x& 0\\ 0 & x}$.
This is equivariant with respect to the natural $\GL_2$-action, so we
get an induced morphism of stacks
$$\mb G_{m}\times \BGL_2\to \GL_2/\GL_2\,$$
which is easily seen to be an isomorphism onto the first stratum.

\underline{Second stratum}: Let $T$ be the standard maximal torus of $\GL_2$. Let
$\Delta$ be the centre of $\GL_2$, which is the diagonal subtorus of
$T$.  Let $N$ be the normalizer of $T$.  We have a short exact
sequence
$$\xymatrix@1{
0\rto & T\rto & N\rto & \mb Z_2\rto & 0}$$
where $\mb Z_2$ is the Weyl group of $\GL_2$. Note that $N=\mb G_m^2\rtimes
\mb Z_2$ is in fact a semi-direct product, by taking $\smat{0&1\\1&0}$ as the nontrivial element of $\mb Z_2\subset N$.
The induced action of the Weyl group $\mb Z_2$ on $T$ is by swapping the two entries. 
The natural inclusion map $T\setminus\Delta\to \GL_2$ is equivariant for the inclusion $N\subset \GL_2$, so we get an induced morphism of stacks
$$(T\setminus \Delta)/N\longrightarrow \GL_2/\GL_2\,$$
which is an isomorphism onto the second stratum. We will abbreviate this as
$$\XX=(T\setminus\Delta)/N.$$

\underline{Third stratum}: Let $H$ be the (commutative) subgroup of all matrices of the form
$\smat{\lambda&\mu\\ 0 & \lambda}$. Note that $H$ is the centralizer of every matrix of the form
$\smat{a&1\\0&a}$, with $a\not=0$. Thus we see that the third stratum
is isomorphic to $\mb G_{m}\times \B H$. 

We conclude that in the level of motivic classes the inertia of the class $[\BGL_2]$ is given by
$$\In [\BGL_2]= (q-1)[\BGL_2] + [\XX] + (q-1)[\B H]\,.$$

Since $H$ is commutative, we also have $\In [\B H] = q(q-1) [\B H]$. We will now find the inertia
of the second stratum $\XX$.  Note 
that the coarse moduli space of $\XX$ is the smooth variety $X = T\setminus \Delta/\mb Z_2$. Note also that $\In \XX = \ZI \XX$ as the stabilizer of any point in $T \setminus \Delta$ is commutative. We will write $\tilde X=T\setminus \Delta$ to emphasize the fact that $T\setminus \Delta $ is a degree 2
cover of $X$. 

Associated to the $\mb Z_2$-action on the group $T$, there exists a commutative $X$-group scheme
$$T'=\tilde X\times_{\mb Z_2} T,$$
with fibre $T$. By Lemma \ref{lem:neutral-gerbe-finder}, $\XX$ is the neutral gerbe, 
$\XX=B_X T'$. And $\ZI \XX = \In \XX$ fits in the cartesian diagram 
$$\xymatrix{
\In \XX\rto \dto & \XX\dto \\
T'\rto & X.}$$

The representation of $\mb Z_2$ on $\mb A^2$ given by swapping entries, yields 
a canonical closed embedding $T'\subset V$, into a rank 2 vector bundle 
over $X$. As in Proposition \ref{prop:motivic-class-of-quasi-split-torus}, this 
leads to
$$\In [\XX] = (q^2 - 1) [\XX]- (q-1)^2 (q-2)[\B \mb G_m^2]$$

Thus, the 4-dimensional $K(\Var)$-module, $L$, of motives generated by the 4
motives 
$$ [\BGL_2]\,, [\B H]\,, [\XX]\,,\text{ and } [\B \mb G_m^2]$$
is preserved by the inertia endomorphism $\In$. The first element in this 
set is of central rank 1 and the other three are of central rank 2. $\B H$ has 
reductive rank $1$, $\XX$ has reductive rank 2 with the nontrivial partition of $2$ associated
to it, and $\B \mb G_m^2$ has a rank 2 torus with the trivial partition of $2$ associated to it. The eigenvalue
spectrum is hence $\{ q-1, q(q-1), q^2 -1, (q-1)^2\}$. Inertia endomorphism is
lower triangularizable on $L$ and we have
$$
\In= \pmat{ q-1 & 0 & 0 & 0 \\
 q-1 & q (q-1) & 0 & 0 \\
 1 & 0 & q^2 - 1& 0\\
 0 & 0 & -(q-1)^2 (q-2) & (q-1)^2 }
$$
with a set of eigenvectors

\begin{table}[h!]
\centering
$$
\begin{array}{r|l}
\text{Eigenvalues} & \text{Eigenvectors}\\ \hline
q-1 & -q(q-1) [\BGL_2] + q [\B H] + [\XX] + (q-1) [\B \mb G_m^2]\\
q (q-1) & [\B H]\\
q^2 - 1 & [\XX] - \frac{(q-1)(q-2)}2 [\B \mb G_m^2]\\
(q-1)^2 & [\B \mb G_m^2]
\end{array}
$$
\caption{Spectrum of the inertia endomorphism on a 4-dimensional $K(\Var)$-submodule of $K(\St)$ containing $[\BGL_2]$}
\label{table:1}
\end{table}
Also $\In$ is diagonalizable on $L[q^{-1}, (q-1)^{-1}]$ and the eigenprojections of $[\BGL_2]$ are 

\begin{table}[h!]
\centering
$$\begin{array}{r|l}
\text{Eigenvalues} & \text{Eigenvectors}\\ \hline
\Pi_{q-1} &  [\BGL_2] - \frac{q}{q-1} [\B H] - \frac{1}{q(q-1)} [\XX] - \frac{1}{q} [\B \mb G_m^2]\\
\Pi_{q (q-1)} & \frac{q}{q-1}[\B H]\\
\Pi_{q^2 - 1} & \frac{1}{q(q-1)} [\XX] - \frac{(q-2)}{2q} [\B \mb G_m^2]\\
\Pi_{(q-1)^2} & \frac{1}2[\B \mb G_m^2]
\end{array}$$
\caption{Eigenprojections of $[\BGL_2]$}
\label{table:1}
\end{table}
\end{ex}

\begin{lem}\label{lem:neutral-gerbe-finder}
Let $\Gamma$ be a group acting on the group $T$ by automorphisms, $\Gamma \to
\Aut(T)$, and let $G=N\rtimes H$ be the associated semi-direct product of
groups. Let $X$ be a variety, $\tilde X\to X$ a principal $\Gamma$-bundle,
and $T' \to X$ is the associated form of $T$ over $X$. The $B_X T' =\tilde X/G$.
\end{lem}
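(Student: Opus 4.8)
The plan is to obtain the identification $\tilde X/G\cong B_XT'$ from a short chain of standard stack-quotient manipulations. Here it is understood that $G=T\rtimes\Gamma$ acts on $\tilde X$ through the projection $G\to\Gamma$ together with the given principal $\Gamma$-action, so that $T$ acts trivially; and $T'=\tilde X\times_\Gamma T$ is the twisted $X$-form of $T$ attached to the $\Gamma$-torsor $\tilde X\to X$ (as in Proposition \ref{prop:motivic-class-of-quasi-split-torus} and Example \ref{ex:BGL2}). First I would apply quotient-in-stages along the normal subgroup $T\triangleleft G$, giving $\tilde X/G\cong(\tilde X/T)/\Gamma$ since $G/T\cong\Gamma$. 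As $T$ acts trivially on $\tilde X$, one has $\tilde X/T\cong\tilde X\times BT$, and the residual $\Gamma=G/T$ acts on $\tilde X\times BT$ diagonally: on the factor $\tilde X$ by the original, free $\Gamma$-action, and on the factor $BT$ through the conjugation action of $G/T$ on $T$, which for a semidirect product is precisely the prescribed map $\Gamma\to\Aut(T)$. Since the $\Gamma$-action on $\tilde X$ is free with quotient $X$, the remaining quotient is the associated-bundle construction, $(\tilde X\times BT)/\Gamma\cong\tilde X\times_\Gamma BT$; and because forming $B(-)$ commutes with twisting by a $\Gamma$-torsor, $\tilde X\times_\Gamma BT\cong B_X(\tilde X\times_\Gamma T)=B_XT'$, which closes the chain.

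Every link of this chain is routine: quotient-in-stages, the identity $Y/H\cong Y\times BH$ for a trivial $H$-action, the passage from a stack quotient by a free action to an associated fibre bundle, and functoriality of $B(-)$. The one step that genuinely requires care --- and the main, if modest, obstacle --- is the $2$-categorical bookkeeping: one must fix compatible left/right conventions throughout and check that the $\Gamma$-descent datum produced on $\tilde X\times BT$ over $X$ is literally the datum that glues the local copies of $BT$ into $B_XT'$, i.e.\ that the cocycle on triple overlaps matches, not merely that one has an abstract fibrewise equivalence.

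As a sanity check, and to make the descent datum explicit, I would also verify the statement after base change along $\tilde X\to X$, which is finite \'etale when $\Gamma$ is finite (the relevant case, $\Gamma$ being a Galois group). There $\tilde X\times_X\tilde X\cong\tilde X\times\Gamma$ trivialises the torsor, so $T'$ restricts to the constant form $\tilde X\times T$ while $\tilde X/G$ restricts to $(\tilde X\times\Gamma)/(T\rtimes\Gamma)\cong B_{\tilde X}T$; thus both sides pull back to $B_{\tilde X}T$, and it only remains to see that the two $\Gamma$-actions on $T$ entering the respective descent data coincide, which is immediate from the construction of $T'$ and of the semidirect product. Since $\tilde X\to X$ is a connected Galois cover in the applications, this suffices.
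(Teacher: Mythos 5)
Your argument is correct, but it is organized differently from the paper's. You quotient in stages through the normal subgroup $T$: $[\tilde X/G]\cong[[\tilde X/T]/\Gamma]$, identify $[\tilde X/T]\cong\tilde X\times \B T$ (trivial $T$-action), note that the residual $\Gamma$-action on the $\B T$ factor is the prescribed one via conjugation in the semidirect product, and finish with the associated-bundle construction plus the principle that forming $\B(-)$ commutes with twisting along a $\Gamma$-torsor, which you then justify by descent along $\tilde X\to X$. The paper instead never quotients in stages and never puts a $\Gamma$-action on the stack $\B T$: it takes the identity map of the trivial $T$-bundle $\tilde X\times T\to\tilde X$, viewed as equivariant for $\Gamma\hookrightarrow G$, and passes to quotients to obtain a cartesian square whose corners are $T'$, $X\cong(\tilde X\times T)/G$, $X\cong\tilde X/\Gamma$ and $\tilde X/G$; this exhibits $X\to\tilde X/G$ as an atlas with $X\times_{\tilde X/G}X\cong T'$, i.e.\ the groupoid presentation $T'\rightrightarrows X$ of $\tilde X/G$, whence $\tilde X/G\cong \B_XT'$. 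The trade-off: the paper's route stays entirely with scheme-level equivariance and one cartesian diagram, sidestepping the $2$-categorical bookkeeping (group actions on stacks, quotient-in-stages for stacks) that you rightly flag as the delicate point of your chain; your route is more modular and makes conceptually transparent where the twist of $T$ comes from, but its final link ($\tilde X\times_\Gamma \B T\cong \B_X(\tilde X\times_\Gamma T)$) is essentially the content of the lemma and your descent verification of it amounts to re-deriving, after base change to $\tilde X$, the same fiber-product computation the paper performs directly.
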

\begin{proof}
Consider the diagram
$$\xymatrix{
\tilde X\times T\rto \dto & \tilde X\times T\dto\\
\tilde X\rto & \tilde X}$$
where $\Gamma$ acts on the first column and $G$ on the second
column in the obvious way.  Then the horizontal arrows are a morphism of $T$-bundles
which is $\Gamma \to G$ equivariant.  Thus we get an induced cartesian
diagram of stacks
$$\vcenter{\xymatrix{
(\tilde X\times T)/\Gamma \rto \dto & (\tilde X\times T)/G\dto\\
\tilde X/H\rto & \tilde X/G}}
 \quad \text{which we may rewrite as } \quad
\vcenter{\xymatrix{
T'\rto \dto & X\dto\\
X \rto & \tilde X/G.}}$$
Then the latter diagram induces a morphism $\XX\to \tilde X/G$, which
is then obviously an isomorphism.
\end{proof}

\begin{ex}\label{ex:T2Z2} In the previous example the central group schemes were always connected. We will now present an example that demonstrates how non-connected central group schemes contribute to non-monic eigenvalues. Let $N = \mb G_m^2 \rtimes \mb Z_2$ be the group scheme introduced in previous example. In this example we study $\B N$. The inertia of $\B N$ has two obvious connectedness components: 
$$\In\, \B N  = N / N  = \mb G_m^2 / N \sqcup \mb G_m^2 \times \{ \sigma\} / N$$
where $\sigma$ is the nontrivial element of $\mb Z_2$. 

\underline{First stratum}: This stratum is not already a gerbe (as the stabilizer of points on diagonal $\Delta \subset \mb G_m^2$ is not isomorphic to the stabilizer of other points). However the following is a stratification of it into clear gerbes:
$$\mb G_m^2 / N = \Delta / N \sqcup \XX,$$
where $\XX = \mb G_m^2 \setminus \Delta / N$ is the same quotient stack that appeared in previous example. The action of $N$ on $\Delta$ is trivial so we have  
$$[\mb G_m^2 / N ] = (q-1) [\B N] + [\XX].$$

\underline{Second stratum}: This stratum is already a clear gerbe and we will denote it as $\YY$. Any point $\langle(\mu, \gamma), \sigma\rangle$ of $\mb G_m^2 \times \{\sigma\}$ is conjugate to $\langle (\mu\gamma, 1), \sigma\rangle$ which is canonical for the orbit. Thus the subscheme $Y$ representing the points,
$$\{ \langle (x, 1), \sigma \rangle \} \subset \mb G_m^2 \times \{\sigma\},$$
is a coarse moduli space for this gerbe. This is isomorphic to $\mb A^1 \setminus \{0\}$. The stabilizer of this subscheme is a subgroup scheme $N' \subseteq N$, the fiber of which over a geometric point $\bar x$ of $Y$ is the $\kappa(\bar x)$-algebraic group
$$N'_{\bar x} = \{ \langle(t, t), 1\rangle: t \in \kappa (\bar s)^\times \} \cup \{ (xt, t), \sigma\rangle: t \in \kappa(\bar s)^\times \}.$$  
We notice that the mapping $Y / N' \to \YY$ is an isomorphism of stacks and also that $N'$ is a commutative $Y$-group scheme, acting trivially on $Y$. Therefore
\begin{align*}
[\YY] &= [Y] [\B N'] = (q-1) [\B N']\\
\In [\B N'] &= [N'/ N'] = [N'] [\B N'] 
\end{align*}
Finally $N'/ (N')^0 \to Y$ is a degree two covering of $X$. The image of $(N')^0$ in $N' / (N')^0$ is isomorphic to $Y$ and therefore so is the image of the other connected component. So $N'$ is Zariski locally the union of two $\mb G_m$-torsors over $Y$. Pulling back along $\YY \to Y$ we have  
$$\In [\B N'] = 2 (q-1)[\B N'].$$

We conclude that the $K(\Var)$-submodule of $K(\St)$ generated by 
$$[\B N]\,, [\B N']\,, [\XX]\,, \text{ and }[\B \mb G_m^2]$$ 
is invariant under inertia endomorphism. The first two generators have central rank one, and $[\B N]$ has split central number one whereas $[\B N']$ has split central number two. The spectrum of $\In$ restricted to this submodule is the set 
$$\{ (q-1), 2 (q-1), q^2 - 1, (q-1)^2\}$$
as expected. 
\end{ex}

\begin{ex}\label{ex:BGL3} Another simple example that shows many features of this theory is the stack $\BGL_3$. As before, the inertia stack is isomorphic to the quotient stack $[\GL_3/ \GL_3]$ via conjugation action of $\GL_3$ on itself. We first stratify this quotient according to Jordan canonical forms: let $J^k_\lambda$ be the subscheme of all general linear matrices with $k$-distinct eigenvalues and $\lambda \vDash 3$ is a partition of $3$ indicating format of the Jordan blocks and $R^k_\lambda \rightrightarrows J^k_\lambda$ is the groupoid representation of restriction of $[\GL_3 / \GL_3]$ to $J^k_\lambda$. Then we have a stratification 
\begin{align*}
[\GL_3/ \GL_3] &= [ J^1_{(3)} / R^1_{(3)}] \sqcup [ J^1_{(2,1)} / R^1_{(2,1)}] \sqcup [ J^1_{(1,1,1)} / R^1_{(1,1,1)}] \\
&\sqcup  [J^2_{(2, 1)} / R^2_{(2, 1)} ] \sqcup  [J^2_{(1,1, 1)} / R^2_{(1,1, 1)}] \\
& \sqcup [J^3_{(1,1,1)} / R^3_{(1,1,1)}  ]
\end{align*}
The action of $R^k_\lambda$ on $J^k_\lambda$ by conjugation is always trivial unless in presence of Jordan blocks of same dimension with distinct eigenvalues (which can then be permuted). Thus
\begin{align*}
[\GL_3/ \GL_3] &= J^1_{(3)} \times \B R^1_{(3)} \sqcup J^1_{(2,1)} \times \B R^1_{(2,1)} \sqcup J^1_{(1,1,1)} \times \B R^1_{(1,1,1)}\\
&\sqcup  J^2_{(2, 1)} \times \B R^2_{(2, 1)}  \sqcup  J^2_{(1,1, 1)} \times \B R^2_{(1,1, 1)} \\
& \sqcup J^3_{(1,1,1)} \times \B R^3_{(1,1,1)}  
\end{align*}
We recall the notation of Example \ref{ex:BGL2} for the subgroup of upper-triangular $2 \times 2$ matrices with a single eigenvalue of multiplicity two: 
$$H = \left\{ \pmat{ a & b \\ 0 & a }: a, b \in \mb G_m\right\}.$$
This represents a commutative group scheme. Now, easy computations show that all $R^k_\lambda$'s are subgroup schemes of $\GL_3$ and in fact
\begin{table}[h!]
\centering
$$\begin{array}{c|c|c}
\text{Groupoid} & \text{Group scheme structure} & \text{Commutative?} \\ \hline
R^1_{(3)}& \left\{ \pmat{  a & b & c \\ 0 & a & b \\ 0 & 0 & a}: a\in \mb G_m, b, c \in \mb A^1 \right\} & \text{Yes} \\
R^1_{(2,1)}& \left\{ \pmat{  a & b & c \\ 0 & a & 0 \\ 0 & d & e}: a, e \in \mb G_m, b, c,d  \in \mb A^1 \right\} & \text{No}  \\
R^1_{(1,1,1)}& \GL_3 & \text{No} \\
R^2_{(2,1)}& H \times \mb G_m & \text{Yes} \\
R^2_{(1,1,1)}& \GL_2 \times \mb G_m & \text{No} \\
R^3_{(1,1,1)}& \mb G_m^3 \rtimes S_3 & \text{Yes}
\end{array}$$
\caption{Stratification of $\GL_3$}
\label{table:1}
\end{table}

\begin{align*}
[\GL_3/ \GL_3] &= (q-1) [ \B R_{(3)}^1] + (q-1) [\B R^1_{(2,1)}] + (q-1) [\BGL_3] \\
& +  (q-1)(q-2) [\B H][\B\mb G_m] +  (q-1)(q-2) [\BGL_2][\B\mb G_m] \\
& + [\mb G_m^3 / \mb G_m^3 \rtimes S_3 ]
\end{align*}
Since inertia respects the commutative algebra structure of $K(\St)$ we may use the previous example to compute the effect of inertia on terms of the second line above. Since $R_{(3)}^1$ is commutative we also have 
$$\In [\B R_{(3)}^1] = [R_{(3)}^1] [\B R_{(3)^1}] = q^2 (q-1) [\B R_{(3)}^1].$$ 

The case of $\YY= [\mb G_m^3 / \mb G_m^3 \rtimes S_3]$ is similar to that of $[\mb G_m^2 / \mb G_m^2 \rtimes \mb Z_2]$. 
It remains to analyze the action of $G = R^1_{(2,1)}$ on itself. We need to stratify $G/G$ to several substacks which is carried out in Table \ref{table:stratification-r1}.
\begin{table}[t]
\centering
$$\begin{array}{c|c|c}
\text{Strata} & \text{Canonical form for an orbit} & \text{Centralizer of the canonical form} \\ 
\hline
\pmat{ a & b & c \\ 0 & a & 0 \\ 0 & d & e }: a \neq e & \pmat{ a & b + cd / (a-e) & 0 \\ 0 & a & 0 \\ 0 & 0 & e } &G_1 = \left\{ \pmat{ x & y & 0 \\ 0 & x & 0 \\ 0 & 0 & z }: x, z \neq 0 \right\} \\
\pmat{ a & b & c \\ 0 & a & 0 \\ 0 & d & a }: c, d \neq 0 & \pmat{ a & 0 & 1 \\ 0 & a & 0 \\ 0 & c d & a } &G_2= \left\{ \pmat{ x & y & z \\ 0 & x & 0 \\ 0 & w & x }: x \neq 0 \right\}\\
\pmat{ a & b & 0 \\ 0 & a & 0 \\ 0 & d & a }: d \neq 0 & \pmat{ a &b + d & 0 \\ 0 &a &0 \\0 &d &a } &G_3 = \left\{ \pmat{ x & y & 0 \\ 0 & x &0 \\ 0 & z & x }: x \neq 0 \right\}\\
\pmat{ a & b & c \\ 0 & a & 0 \\ 0 & 0 & a }: c \neq 0 & \pmat{ a & 0 & 1 \\ 0 & a & 0 \\ 0 & 0 &a } &G_4 = \left\{ \pmat{ x & y & z \\0 &x &0 \\0 & 0&x }: x \neq 0 \right\}\\
\pmat{ a & b & 0 \\ 0 & a & 0 \\ 0 & 0 & a } & \text{itself} & G 
\end{array}$$
\caption{Stratification of $R^1_{(2, 1)}$}
\label{table:stratification-r1}
\end{table}
It follows that 
\begin{multline*}\In [\B G ] = q(q-1) [\B G ] + q^3 (q-1)(q-2) [\B G_1]\\ + 
q (q-1)^3 [ \B G_2] + q (q-1)^2 [\B G_3] + q (q-1)^2 [\B G_4].
\end{multline*}
We conclude that $[\BGL_3]$ is contained in a 9-dimensional $K(\Var)$-submodule of $K(\St)$ which is diagonalizable (Table \ref{table:bgl3-9dim}). 

\begin{table}[h!]
\centering
$$
\begin{array}{c|c|c|l|l}
\text{Central rank} & \text{Reductive rank} & \text{Twist type}&  \text{Pivot elements} & \text{Eigenvalue} \\ \hline\hline
1& 1& (1) & [\BGL_3] & q-1 \\ \hline
2& 2& (2, 0) & [\BGL_2][\B \mb G_m] & (q-1)^2\\\cline{2-5}
& 1& (1)& [\B G] & q (q-1) \\ \hline
3& 3& (0, 0, 1)& [\YY] & q^3 -1\\ \cline{3-5}
 & & (1, 1, 0)& [\XX] [\B \mb G_m] & (q^2-1)(q-1) \\\cline{3-5}
& & (3, 0, 0)& [\B \mb G_m^3] & (q-1)^3\\ \cline{2-5}
& 2& (2, 0)& [\B H][\B\mb G_m], [\B G_1] & q (q-1)^2\\\cline{2-5}
& 1& (1)& [\B R^1_{(3)}], [\B G_3], [\B G_4] & q^2 (q-1)\\ \hline
4& 1& (1)&[\B G_2] & q^3 (q-1)
\end{array}$$
\caption{Spectrum of the inertia endomorphism of a 9-dimensional $K(\Var)$-submodule of $K(\St)$ containing $[\BGL_3]$}
\label{table:bgl3-9dim}
\end{table}
\end{ex}


\begin{thebibliography}{10}

\bibitem{SGA3}
M.~Artin, J.~E. Bertin, M.~Demazure, A.~Grothendieck, P.~Gabriel, M.~Raynaud,
  and J.-P. Serre.
\newblock {\em Sch\'emas en groupes}.
\newblock S\'eminaire de G\'eom\'etrie Alg\'ebrique de l'Institut des Hautes
  \'Etudes Scientifiques. Institut des Hautes \'Etudes Scientifiques, Paris,
  1963/1966.

\bibitem{BRHall}
K.~{Behrend} and P.~Ronagh.
\newblock {The inertia operator on the motivic Hall algebras}.
\newblock {\em ArXiv e-prints}, December 2016.

\bibitem{Bergh}
Daniel Bergh.
\newblock {\em Computations in the Grothendieck Group of Stacks}.
\newblock PhD thesis, Department of Mathematics, Stockholm University, February
  2012.

\bibitem{Conrad}
Brian Conrad.
\newblock Reductive group schemes (sga3 summer school).
\newblock \url{math.stanford.edu/~conrad/papers/luminysga3.pdf}, 2011.
\newblock Retrieved on November 2012.

\bibitem{CLO}
Brian Conrad, Max Lieblich, and Martin Olsson.
\newblock Nagata compactification for algebraic spaces.
\newblock {\em Journal of the Institute of Mathematics of Jussieu},
  11:747--814, 2012.

\bibitem{Giraud}
Jean Giraud.
\newblock {\em Cohomologie non ab\'elienne}.
\newblock Springer-Verlag, Berlin, 1971.
\newblock Die Grundlehren der mathematischen Wissenschaften, Band 179.

\bibitem{EGA}
Alexander Grothendieck and Jean-Alexandre-Eug\`ene Dieudonn\'e.
\newblock {\em \'{E}l\'ements de g\'eom\'etrie alg\'ebrique.}
\newblock Number 4, 8, 11, 17, 20, 24, 28, and 32 in Publications
  Math\'ematiques. Institut des Hautes \'Etudes Scientifiques, 1960-1967.

\bibitem{Kambayashi}
T.~Kambayashi and David Wright.
\newblock Flat families of affine lines are affine-line bundles.
\newblock {\em Illinois J. Math.}, 29(4):672--681, 12 1985.

\bibitem{LMB}
G{\'e}rard Laumon and Laurent Moret-Bailly.
\newblock {\em Champs alg\'ebriques}, volume~39 of {\em Ergebnisse der
  Mathematik und ihrer Grenzgebiete. 3. Folge. A Series of Modern Surveys in
  Mathematics}.
\newblock Springer-Verlag, Berlin, 2000.

\bibitem{Lenstra}
H.~W. Lenstra.
\newblock Galois theory for schemes.
\newblock \url{http://websites.math.leidenuniv.nl/algebra/GSchemes.pdf}, 2012.
\newblock Retrieved on February 2012.

\bibitem{MilneALA}
James~S. Milne.
\newblock Algebraic groups, lie groups, and their arithmetic subgroups.
\newblock \url{www.jmilne.org/math/}, 2011.
\newblock Retrieved on November 2012.

\bibitem{AGS}
James~S. Milne.
\newblock Basic theory of affine group schemes, 2012.
\newblock Available at \url{www.jmilne.org/math/}.

\bibitem{Puttick}
Alexandre Puttick.
\newblock Galois groups and tehe \'etale fundamental group.
\newblock 2012.
\newblock Retrieved on September 2012.

\bibitem{RaynaudGrpSch}
Michel Raynaud.
\newblock {\em Faisceaux amples sur les sch\'emas en groupes et les espaces
  homog\`enes}.
\newblock Lecture Notes in Mathematics, Vol. 119. Springer-Verlag, Berlin,
  1970.

\bibitem{RG}
Michel Raynaud and Laurent Gruson.
\newblock Crit\`eres de platitude et de projectivit\'e. {T}echniques de
  ``platification'' d'un module.
\newblock {\em Invent. Math.}, 13:1--89, 1971.

\bibitem{MobiusInversion}
John Riordan.
\newblock Inverse relations and combinatorial identities.
\newblock {\em The American Mathematical Monthly}, 71(5):485--498, 1964.

\bibitem{SP}
The {Stacks Project Authors}.
\newblock \itshape stacks project.
\newblock \url{http://stacks.math.columbia.edu}, 2015.
\newblock Last time retrieved on January 2016.

\end{thebibliography}
\end{document}